\theoremstyle{plain}
\newtheorem{thm}{Theorem}
\newtheorem{lem}[thm]{Lemma}
\newtheorem{prop}[thm]{Proposition}
\theoremstyle{definition}
\newtheorem{definition}[thm]{Definition}
\newtheorem{remark}[thm]{Remark}
\newtheorem{example}[thm]{Example}
\numberwithin{thm}{section}
\numberwithin{equation}{section}
\newcommand{\EQ}[1]{\eqref{#1}}
\newcommand{\THM}[1]{Theorem~\ref{#1}}
\newcounter{hypo}
\DeclareMathOperator*{\osc}{osc}
\DeclareMathOperator{\diam}{diam}
\DeclareMathOperator{\dist}{dist}
\DeclareMathOperator{\intr}{int}
\DeclareMathOperator{\CCA}{CCA}
\DeclareMathOperator{\USC}{USC}
\DeclareMathOperator{\LSC}{LSC}
\DeclareMathOperator*{\esssup}{ess\,sup}
\newcommand{\R}{\ensuremath{\mathbb{R}}}
\newcommand{\rn}{\R^n}
\newcommand{\ep}{\varepsilon}
\newcommand{\mrbox}[1]{\quad \mbox{#1} \ }
\newcommand{\bu}{\partial U} 
\newcommand{\bv}{\partial V}
\newcommand{\ra}{\rightarrow}
\newcommand{\beq}[1]{\begin{equation}\label{#1}}
\newcommand{\eeq}{\end{equation}}
\newcommand{\beqs}{\begin{equation*}}
\newcommand{\eeqs}{\end{equation*}}
\newcommand{\set}[1]{\left\{#1\right\}}
\newcommand{\is}[1]{\text{\ #1\ }}
\newcommand{\iq}[1]{\text{\quad #1\quad }}
\newcommand{\uc}{\bar U}\newcommand{\vc}{\bar V}
\newcommand{\lloc}{{\rm Lip_{loc}}}
\newcommand{\lp}{\left(}\newcommand{\rp}{\right)}
\newcommand{\la}[2]{L\lp\frac{#1}{#2}\rp}
\newcommand{\ca}{{\mathcal A}}
\newcommand{\ccas}{satisfies comparisons with cones from above}
\newcommand{\cca}{satisfy comparisons with cones from above}
\newcommand{\tend}{\bibliographystyle{plain}\bibliography{ccrituniq}\end{document}}
\newcommand{\cp}{{\mathcal P}}
\newcommand{\f}{f}
\newcommand{\g}{g}
\newcommand{\flow}{T}
\begin{document}
\title[Convexity criteria and uniqueness]{Convexity criteria and uniqueness of absolutely minimizing functions}
\author[S. N. Armstrong]{Scott N. Armstrong}
\address{Department of Mathematics\\ Louisiana State University\\ Baton Rouge, LA 70803.}
\email{armstrong@math.lsu.edu}
\author[M. G. Crandall]{Michael G. Crandall}
\address{Department of Mathematics\\ University of California, Santa Barbara \\ Santa Barbara, CA, 93106}
\email{crandall@math.ucsb.edu}
\author[V. Julin]{Vesa Julin}
\address{Department of Mathematics and Statistics\\ P.O. Box 35, FIN-40014\\ University of Jyv\"askyl\"a, Finland}
\email{vesa.julin@jyu.fi}
\author[C. K. Smart]{Charles K. Smart}
\address{Department of Mathematics\\ University of California\\ Berkeley, CA 94720.}
\email{smart@math.berkeley.edu}
\date{\today}
\keywords{calculus of variations in $L^\infty,$ absolute minimizer, Aronsson equation, convexity criteria, comparison principle}
\subjclass[2000]{49K20, 35J70, 49K40}

\begin{abstract}
We show that absolutely minimizing functions relative to a convex Hamiltonian $H:\rn\ra\R$ are uniquely determined by their boundary values under minimal assumptions on $H.$  Along the way, we extend the known equivalences between comparison with cones, convexity criteria, and absolutely minimizing properties,  to this generality. These results perfect a long development in the uniqueness/existence theory of the archetypal problem of the calculus of variations in $L^\infty.$
\end{abstract}\maketitle


\section{Introduction}\label{intro}

\maketitle


Consider a Hamiltonian $H: \rn\ra \R$ which satisfies
\beq{ch}
\left\{\begin{aligned}  & {\rm (i)} \ H  \is{is convex,}\\
 & {\rm (ii)} \ H(0)=\min_{\rn}H=0,\\
& {\rm (iii)} \is{the set} \set{p: H(p)=0} \is{is bounded and has empty interior.}
\end{aligned}\right.
\eeq 
Let $U\subset\rn$ be bounded and open, take $g\in C(\bu),$ and consider the archetypal problem of the calculus of variations in $L^\infty:$ find a function
\beqs
u\in\mathcal{A}:= \{ v \in C(\uc) : v = g \ \mbox{on}  \  \bu \}
\eeqs
such that
\beq{min} 
\|H(Du)\|_{L^\infty(U)}\le \|H(Dv)\|_{L^\infty(U)}\quad \mbox{for all} \ v\in \mathcal{A}.
\eeq
Here $Dv$ is the gradient of $v,$ and the quantity $\|H(Dv)\|_{L^\infty(U)}$ is assigned the value $\infty$ unless $v$ is locally Lipschitz continuous in $U$ (we write $v\in\lloc (U)$ for short), in which case $\| H(Dv) \|_{L^\infty(U)}$ may be defined in the usual way since $v$ is differentiable almost everywhere in $U$ by Rademacher's theorem.

Results of Barron, Jensen and Wang \cite{BJW1} and Champion, De Pascale and Prinari \cite{CDP} show that this problem has solutions provided that $g$ is the restriction to $\bu$ of some function $v\in C(\uc)\cap\lloc(U)$ for which $H(Dv)$ is essentially bounded. Structure conditions in \cite{BJW1} arising from the method used therein do not allow all the $H$'s satisfying \EQ{ch}, but the Perron method does produce minimizers at the full generality of \EQ{ch}, as shown in \cite{CDP}. However, even if $H(p)=|p|$ is the Euclidean norm, the minimizing functions are badly nonunique in general.  

Aronsson \cite{A1, A2} attacked this nonuniqueness phenomenon by adding conditions which included that \EQ{min} should not only hold on $U,$ but on each open subdomain $V$ of $U$ for which $\vc\subset U$ (which we will write as $V\ll U$): that is, if $V\ll U,$ then 
\beq{am} 
\|H(Du)\|_{L^\infty(V)}\le \|H(Dv)\|_{L^\infty(V)} \quad \mbox{for each}\   v\in C(\vc) \ \mbox{with} \ v=u\is{on} \bv. 
\eeq
We say that \emph{$u$ is absolutely minimizing for $H$ on $U$} if \EQ{am} holds for all $V\ll U.$ Notice that ``absolutely minimizing," as we have formulated it, {\it does not} by definition include that $u$ is minimizing, i.e., that \EQ{min} holds. In particular, it does not require $u$ to be defined on $\bu.$ The results of \cite{BJW1, CDP} provide absolutely minimizing solutions of \EQ{min} in a generality in which $H(Du)$ is replaced by $H(x,u,Du),$ and $H(x,s,p)$ only needs to be ``quasi-convex" or ``level-set convex" in $p,$ and so on, under suitable structure assumptions. However, the issue is uniqueness. Yu \cite{Yu2} noted that absolute minimizers are not uniquely determined by their boundary data even for $H$'s of the simple form $H=H(x,p)=|p|^2+f(x)$ and $n=1.$ 

\medskip

In this paper we perfect the theory of uniqueness for the case ``$H(p)$", and thereby complete a long story.  A special case of  results herein is that if $H$ satisfies \EQ{ch}, then the comparison principle holds; that is, if $u,v\in C(\uc)$ are absolutely  minimizing in $U$ for $H,$ then  
\beq{cth}
u(x)-v(x)\le \max_{y\in \bu} (u(y)-v(y)) \quad \is{for all} x\in U.
\eeq
It is easy to see that the requirement  of \EQ{ch}(iii) that the interior of $H^{-1}(0)$ be empty is a necessary condition for the comparison assertion \EQ{cth} to hold.  

It follows from this and the existence results already mentioned that there is exactly one function $u\in C(\uc)$ which is absolutely minimizing in $U$ and satisfies $u=g$ on $\bu.$     This is true even if $g$ is not the restriction to $\bu$ of any function $v$ for which the right hand side of \EQ{min} is finite, so the existence assertion goes a bit beyond the setting of \cite{CDP}.  However, it follows in a standard way from the continuity of absolutely minimizing functions in their boundary values provided by \EQ{cth}.   In all cases, the unique absolute minimizer $u$ satisfies \EQ{min}.

\medskip

What is truly new here is that $H$ is not assumed to be twice continuously differentiable, nor is it  assumed to be a norm.   Jensen, Wang and Yu \cite{JWY} established \EQ{cth} if $H$ is $C^2.$    More closely related to what we do herein is the case in which $H(p)=\|p\|$ is given by a norm $\|\cdot\|.$ While such $H$ are not necessarily $C^1,$  Armstrong and Smart \cite{AS} proved \EQ{cth} in this case (see also \cite{AS2}). Their proof relied on convexity/concavity properties of the functions 
\beq{cvcc}\left\{\begin{aligned}
&{\rm (i)} \ t\mapsto T^tu(x):= \max_{\|y-x\|_*\le t}u(y),\\
&{\rm (ii)} \ t\mapsto T_tv(x):=\min_{\|y-x\|_*\le t}v(y),
\end{aligned}\right.
\eeq
for $u$ and $v$ which are absolutely minimizing for $H(p) = \| p \|.$  Here 
\beqs
\|p\|_*:=\max_{\|q\|\le 1}p\cdot q,
\eeqs
is the dual norm to $\| \cdot \|,$ and $p\cdot q$ is the Euclidean inner-product of $p$ and $q.$ It is proved in Aronsson, Crandall and Juutinen \cite{tour} that the convexity of \EQ{cvcc}(i) is equivalent to $u$ being ``absolutely subminimizing" in $U$ for norms $H(p)=\|p\|$; a similar statement holds regarding the concavity of \EQ{cvcc}(ii). See Section \ref{prelim} for the definition of \emph{absolutely subminimizing}.

In \cite{AS} it is shown that if the map \EQ{cvcc}(i) (respectively, \EQ{cvcc}(ii)) is convex (respectively, concave) for $x\in U,$ and  
 \beq{ps}
0\le t< r(x):=\inf\set{\|y-x\|_*: y\in\bu},
\eeq
then \EQ{cth} holds. Before we outline further how we extend the method of \cite{AS} to handle uniqueness for more general convex Hamiltonians, we recall the role of partial differential equations in earlier works. This will allow us to paint the picture of the nature of the departure of \cite{AS,AS2} and the results herein from preceding works. 

As we show in Appendix \ref{viscsol}, if $u$ is absolutely minimizing for $H,$ then $u$ is a viscosity solution of the corresponding Aronsson equation, namely
\begin{equation} \label{ae}
\ca_H[u]:=H_{p_i}(Du) H_{p_j}(Du) u_{x_ix_j} = 0.
\end{equation}
Of course, $H$ need not be differentiable in our generality, and so the term $``H_{p_i}$" above must be interpreted in terms of the subdifferentials $\partial H(p).$ Recall that
\beqs
y_0\in\partial H(p_0) \iff H(p)\ge H(p_0)+(p-p_0)\cdot y_0\is{for all} p\in\rn.
\eeqs
The possible ``multi-valuedness" of these subdifferentials must be accommodated in interpreting \EQ{ae}.  
To say that $u$ is a \emph{viscosity subsolution} of \EQ{ae} means that $u$ is upper semicontinuous and if $u-\varphi$ has a local maximum at $x_0$ for some smooth $\varphi,$ then 
\beq{gae}
\max_{\omega\in\partial H(D\varphi(x_0))} \omega\cdot D^2\varphi(x_0)\omega\ge 0.
\eeq
Note that if $H$ is not $C^1,$ then the expression on the left of \EQ{gae} depends {\it discontinuously} on $D\varphi(x_0).$ \emph{Viscosity supersolutions} are defined in an analogous way, and a \emph{viscosity solution} is a function which is both a viscosity subsolution and a viscosity supersolution.  

The Hamiltonians
 \beq{uae} 
 H_1(p):=|p|   \quad   \mbox{and}   \quad   H_2(p):=\frac 12|p|^2,
 \eeq
where $|\cdot|$ is the Euclidean norm, define the same class of absolutely minimizing functions and lead to the Aronsson equations
 \beq{euli}
 \ca_{H_1} [u] =  \frac{\Delta_\infty u}{|Du|^2}=0   \quad   \mbox{and} \quad \ca_{H_2}[u]=\Delta_\infty u=0,
 \eeq
where 
\beq{lapinf}
\Delta_\infty u : = \sum u_{x_i}u_{x_j}u_{x_ix_j}
\eeq
is often called the \emph{infinity Laplacian}. If so, then $\ca_{H_1}$ is called the \emph{normalized} or \emph{1-homogeneous infinity Laplacian}. It is not hard to show that the viscosity subsolutions, defined as above, of  $\ca_{H_1}[u]=0$ are the same as the viscosity subsolutions of  $\ca_{H_2}[u]=0.$ It was proved by Jensen in \cite{RJ} that the absolutely minimizing functions for $H_1$ and $H_2$ are precisely the \emph{infinity harmonic functions}, that is, the viscosity solutions of $\Delta_\infty u=0.$ This result was extended to $H\in C^2$ by Gariepy, Wang and Yu \cite{GWY}.

\medskip

At the time of \cite{JWY}, the issue of the uniqueness of absolutely minimizing functions satisfying a given Dirichlet condition was still confounded with the issue of uniqueness of solutions of the Aronsson equation satisfying a Dirichlet condition. Jensen proved uniqueness of infinity harmonic functions satisfying a Dirichlet condition in the seminal paper \cite{RJ}, and this was extended to $H\in C^2$ in \cite{JWY}. As the absolutely minimizing property was characterized by the Aronsson equation for $H\in C^2$ owing to \cite{GWY}, the uniqueness of absolutely minimizing functions followed.

This was the situation until Peres, Schramm, Sheffield, and Wilson \cite{PSSW} discovered a new proof of uniqueness, which was noteworthy in that it did not use the Aronsson equation and held in greater generality. It relied on some complicated probabilistic arguments and a beautiful connection between the infinity Laplace equation and random-turn, two-player games. Finally, the uniqueness theory culminated in the the proof in \cite{AS}, which is elegant, elementary, and makes no use of partial differential equations (or probabilistic methods) and has no need for the viscosity solution machinery developed for second-order elliptic equations.

\medskip

We return to describing  the setting of the current work, in which convexity criteria, rather than the Aronsson equation, play the leading role. Indeed, it is not known whether the result of \cite{GWY} on the sufficiency of the Aronsson equation for the absolutely minimizing property can be extended to the generality of our work; this is certainly an outstanding open problem in the subject.   Until the latter question is resolved in the affirmative, PDE methods cannot be used to obtain the uniqueness result we prove in this work.

One may wonder what becomes of these ``convexity criteria" for more a general convex Hamiltonian $H.$ It was conjectured in Barron, Evans and Jensen \cite{BEJ} that the map \EQ{cvcc}(i) should be generalized to
\beqs
T^tu(x):=w(t,x),
\eeqs
where $w=w(t,x)$ is the viscosity solution of the Hamilton-Jacobi initial value problem
\beq{HJeq}
\left\{ \begin{aligned}
& w_t-H(Dw)=0 & \mbox{in} & \ (0,\infty) \times U, \\
& w= u & \mbox{in} & \ \{ 0 \} \times U.
\end{aligned} \right.
\eeq
The paper \cite{BEJ} was couched in the language of the PDEs involved, as was this conjecture, while in this introduction we are now replacing the PDEs by absolutely minimizing properties from which they can be derived.

Appropriate care has to be taken with exactly where the function $w$ above is well defined, and so forth. Put another way, to uniquely specify $w=w(t,x),$ we must specify boundary conditions on $(0,\infty) \times \bu.$ However, the Hamilton-Jacobi equation \EQ{HJeq} propagates disturbances at finite speeds, and for our purposes we are only concerned with the behavior of $w(t,x)$ for very small $t>0.$ In any case, we sidestep these issues by replacing the Hamilton-Jacobi equations by the Hopf-Lax formulas which they suggest, and refer no more to \EQ{HJeq} or other PDEs. That is, we will simply define
\beqs
\flow^tu(x):=\sup_{y\in U}\lp u(y)-t\la {y-x}t\rp
\eeqs
where $L$ is the Lagrangian of $H,$ defined by the formula
\beqs
L(y):=\sup_{p\in\rn}(y\cdot p-H(p)).
\eeqs
Along the way, we must pay attention to the $(t,x)$ regions in which various properties can be established.   

Note that if $H(p)=\|p\|,$ then its Lagrangian is the function
\beqs
L(y)=\begin{cases} 0 &\is{if} \|y\|_*\le1, \\
\infty &\is{if} \|y\|_*>1,\end{cases}
\eeqs
which gives rise to the formulas \EQ{cvcc}.  

For a clear discussion of the conjecture of \cite{BEJ} in the setting of Aronsson equations, we refer to Juutinen and Saksmann \cite{JS}, who confirmed that under the additional regularity assumption $H\in C^2(\R^n \setminus \{ 0 \}) \cap C^1(\R^n)$ and some further technical conditions, the convexity of the map $t\mapsto \flow^tu(x),$ on a suitable interval analogous to \EQ{ps}, is equivalent to $u$ being a viscosity subsolution of the Aronsson equation.

\medskip

In this work, we remove the assumption that $H\in C^2$ and verify directly that the convexity $t\mapsto \flow^tu(x)$ is equivalent to $u$ being absolutely subminimizing (see \THM{equivalences} below). This allows us to generalize the uniqueness proof of \cite{AS} to a Hamiltonian $H$ satisfying only \EQ{ch}. However, this cannot be done without further ado, owing to the more complex character of $L$ (or equivalently, $T^tu(x)$) in general. To make this point more clearly, we recall that $H_1$ and $H_2$ in \EQ{uae} have the same absolutely minimizing functions, and the corresponding Aronsson equations \EQ{euli} are equivalent in the viscosity sense. However, the Lagrangian for $H_2$ is 
\beqs
L_2(y)=\frac 12 |y|^2,
\eeqs
and its associated convexity criteria do not submit to the analysis of \cite{AS} so easily, in contrast to the case corresponding to $H_1.$

To overcome these subtleties, we use a ``patching" procedure which, roughly speaking, produces approximations of $u$ and $v$ which have gradients bounded away from 0. When coupled with an argument born in the work of Le Gruyer and Archer \cite{LeGA}, this permits us to carry out the schema of \cite{AS}. This patching procedure is not new, special cases having been used in uniqueness proofs in Crandall, Gunnarsson and Wang  \cite{CGW} when $H$ is a norm, and  in \cite{JWY} when $H$ is smooth. It is evidently a necessary step in the proof of Theorem \ref{compth}, as it is the only place in our arguments where the condition that $H^{-1}(0) = \set{p: H(p)=0}$ has empty interior, as required by \EQ{ch}(iii), is needed. In other places in this text, we can get by with only the assumption that $H^{-1}(0)$ is bounded.
 
\medskip

This paper is organized as follows. In Section \ref{prelim} we first set some notation and definitions and formulate our premier result, the comparison theorem described above.  The proof of the comparison theorem is presented in outline in Section \ref{outl}.  By ``in outline" we mean that the key ingredients of the argument are stated, but some of their proofs are deferred.  The deferred proofs make up most of Sections \ref{elrem}-\ref{cb}, and Section \ref{amtctc}.  Section \ref{patch} contains an approximation theorem which is also needed to prove the comparison result; Section \ref{end} contains a variety of tools used in Section \ref{patch} as well as the proof that the convexity criterion implies absolutely subminimizing. Appendix A contains the derivation of the Aronsson equation in the generality under discussion from the convexity criterion.

As our approach requires us to reorganize and generalize much of
the theory of absolutely minimizing functions from scratch, we were able
to make our presentation self-contained at little additional cost.
Therefore this paper is quite accessible to nonexpert readers.

\section{Notation, Preliminaries and the Main Result}\label{prelim}
The following conventions and assumptions are in force throughout this work.  For the reader's benefit, we repeat some material from the introduction in order to reposition it more conveniently.   

The Hamiltonian $H:\R^n\ra\R$ satisfies \EQ{ch}. That is, $H$ is convex, $H(0)=\min_{\R^n}H=0,$ and the level set $H^{-1}(0)$ is bounded and has empty interior. The symbols $U, V, W$ always denote open subsets of $\R^n.$ The closure and boundary of $U$ are denoted by, respectively, $\uc$ and $\bu.$  The notation $V\ll U$ means that $\vc$ is a compact subset of $U.$ Note that the set $U$ it allowed to be unbounded below, unless otherwise said.  

For $r>0,$ we put
\beq{dur}
U_r:=\set{x\in U: \dist(x,\bu)>r},
\eeq
where
\beqs
\dist(x,\bu):=\inf_{y\in\bu}|x-y|,
\eeqs
and $|x|$ denotes the Euclidean length of $x.$ In this regard, the ambiguous notation $\overline U_r$ signifies
\beq{urc}
\overline U_r:={\rm\ closure\ of\ }U_r.
\eeq
Open and closed balls are defined via the Euclidean distance and written
\beqs
B(x,r):=\set{y:|y-x|<r} \quad \mbox{and} \quad \bar B(x,r):=\set{y:|y-x|\le r}.
\eeqs
If $x, y\in\R^n,$ we denote the various line segments with endpoints $x$ and $y$ by
\beqs
\begin{split}
&[x,y]:=\set{(1-t)x+ty: 0\le t\le 1},\ (x,y):=\set{(1-t)x+ty: 0<t< 1},\\& [x,y):=\set{(1-t)x+ty: 0\le t< 1},
\end{split}
\eeqs
and so on.

The set $\lloc(U)$ consists of those functions $v:U\ra\R$ which are Lipschitz continuous on every $V \ll U.$ Recall that if $v\in\lloc(U),$ then $v$ is differentiable at almost every point in $U$ by Rademacher's theorem. If $v\in\lloc (U),$ then $Dv$ denotes its almost everywhere defined gradient, and 
\beq{HDv}
\|H(Dv)\|_{L^\infty(U)}:=\esssup_{x\in U}H(Dv(x)).
\eeq
If $v: U \to \R$ and $v \not\in \lloc(U),$ then the quantity on the left side of \EQ{HDv} is taken to be $\infty.$ Of course, this quantity may be $\infty$ even if $v \in \lloc(U).$

We say $u:U\ra \R$ is \emph{absolutely minimizing for $H$ in $U$} provided that $u\in\lloc(U)$ and \EQ{am} holds; that is, if \beq{cov}
 V\ll U \quad  \mbox{and}  \  v\in\lloc(V)\cap C(\vc) \quad \mbox{satisfies} \  v=u\is{on}\bv,
 \eeq
 then
  \beq{dam}
\|H(Du)\|_{L^\infty(V)}\le \|H(Dv)\|_{L^\infty(V)}.
\eeq
  We will prove that if $u, v\in C(\uc)$ are absolutely minimizing in $U$ and $U$ is bounded, then \EQ{cth} holds, that is
  \beq{ceqa}
u(x)-v(x)\le \max_{y\in \bu} (u(y)-v(y)) \quad \mbox{for every}  \  x\in U.
\eeq
 In fact, in Definition \ref{sam} immediately below, we split the notion of absolutely minimizing into two parts, which we call ``absolutely subminimizing" and ``absolutely superminimizing." In these terms, our premier result is:
\begin{thm}[Comparison Theorem]\label{compth} Let $U$ be bounded, $u,v\in C(\uc),$ $u$ be absolutely subminimizing for $H$ in $U$ and $v$ be absolutely superminimizing for $H$ in $U.$  Then \EQ{ceqa} holds. 
\end{thm}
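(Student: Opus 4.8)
\emph{Proof plan.} The plan is to adapt the cone--comparison argument of \cite{AS} to the present generality, with the Hopf--Lax operators $\flow^t$ and the dual operator $\flow_t$ (defined by $\flow_t v:=-\flow^t(-v)$) playing the role that suprema and infima over metric balls play for a norm, and with two new inputs: the equivalence between the convexity of $t\mapsto\flow^t u(x)$ and the absolutely subminimizing property (\THM{equivalences}), and the ``patching'' approximation of \SEC{patch}. I would argue by contradiction, assuming $\max_{\uc}(u-v)>\max_{\bu}(u-v)$; subtracting a constant we may take $\max_{\bu}(u-v)\le 0<\max_{\uc}(u-v)=:m$.

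The first step is a reduction to functions whose gradients stay away from the zero set $H^{-1}(0)$. By the approximation theorem of \SEC{patch} (a construction in the spirit of \cite{CGW,JWY}) there are, for small $\eta>0$, functions $u_\eta,v_\eta\in C(\uc)\cap\lloc(U)$ converging uniformly on $\uc$ to $u,v$, with $u_\eta$ still absolutely subminimizing and $v_\eta$ still absolutely superminimizing for $H$ in $U$, and with $Du_\eta$ and $-Dv_\eta$ at distance $\ge c(\eta)>0$ from $H^{-1}(0)$ a.e.\ in $U$; in particular $H(Du_\eta),H(-Dv_\eta)\ge c(\eta)$ and $H(Du_\eta),H(Dv_\eta)$ are bounded by some $K_\eta$. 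This is the \emph{only} step that uses that $H^{-1}(0)$ has empty interior, i.e.\ \EQ{ch}(iii). Since $u_\eta-v_\eta\to u-v$ uniformly, for all small $\eta$ one still has $\max_{\uc}(u_\eta-v_\eta)>\max_{\bu}(u_\eta-v_\eta)$, so it suffices to derive a contradiction from $(u_\eta,v_\eta)$; I drop the subscript $\eta$ below.

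Next I would record the calculus of the operators. From the Hopf--Lax formulas: $\flow^t,\flow_t$ are order preserving; $\flow^{s}\flow^{t}=\flow^{s+t}$ and $\flow_{s}\flow_{t}=\flow_{s+t}$; $\flow^t u\ge u$ and $\flow_t v\le v$; and, since $H^{-1}(0)$ is bounded, $L(y)=+\infty$ for $|y|$ large, so $\flow^t u(x)$ depends only on the values of $u$ on $\bar B(x,Ct)$ with $C=C(K)$ --- a finite--speed--of--propagation property that lets us work with $\flow^t u$ on the shrinking sets $U_{Ct}$ without boundary data. I would also check, using \THM{equivalences} (and the fact that at a point of differentiability $D\flow^t u(x)=Du(y)$ for $y$ realizing the supremum), that $\flow^t u$ is again absolutely subminimizing for $H$ in $U_{Ct}$ with both the bound $K$ and the distance $c$ from $H^{-1}(0)$ preserved, and dually for $\flow_t v$. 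By \THM{equivalences}, for $x\in U$ and $0\le t<\dist(x,\bu)/C$ the map $t\mapsto\flow^t u(x)$ is convex and $t\mapsto\flow_t v(x)$ is concave, so
\[
\phi_x(t):=\flow^t u(x)-\flow_t v(x)
\]
is convex there, with $\phi_x(0)=u(x)-v(x)$ and $\phi_x\ge u(x)-v(x)$, hence nondecreasing; moreover at a point of differentiability its right derivative at $0$ equals $H(Du(x))+H(-Dv(x))$.

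The heart of the matter is to make this slope bound hold \emph{everywhere} and then push it to $\bu$; this is where the argument of Le Gruyer and Archer \cite{LeGA} enters. Using that $u$ is absolutely subminimizing and $Du$ stays at distance $\ge c$ from $H^{-1}(0)$, I expect to show that $t\mapsto\flow^t u(x)$ has right derivative at $0$ bounded below by some $\gamma=\gamma(c,K)>0$ \emph{uniformly} for $x$ near the compact set $E:=\{x\in\uc:u-v\ge m/2\}\ll U$, and likewise for $-\flow_t v$, so that $\phi_x(t)\ge (u(x)-v(x))+2\gamma t$ there. Fixing $x_0\in E$ with $u(x_0)-v(x_0)$ close to $m$ and $\rho:=\dist(x_0,\bu)$, I would iterate this estimate --- running the ``ascending'' optimal path for $\flow^{\bullet}u$ together with the ``descending'' one for $\flow_{\bullet}v$, using the semigroup and the preservation statements so that each stage is again an absolutely subminimizing function whose gradient avoids $H^{-1}(0)$ and whose maximum against its companion is attained in the good region, and noting that the accumulated $t$--length needed to reach $\bu$ is at least $\rho/C$ --- to reach a point $\bar x$ with $\dist(\bar x,\bu)$ arbitrarily small at which $\flow^{\tau}u(\bar x)-\flow_{\tau}v(\bar x)\ge m+2\gamma\rho/C-o(1)$; but $\flow^\tau u(\bar x)\le\max_{\bar B(\bar x,C\tau)\cap\uc}u$ and $\flow_\tau v(\bar x)\ge\min_{\bar B(\bar x,C\tau)\cap\uc}v$, and these windows concentrate near $\bu$, so by continuity the left side is $\le\max_{\bu}(u-v)+o(1)\le o(1)$, giving $0\ge m+2\gamma\rho/C$ --- a contradiction, which proves \THM{compth}. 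I expect the principal obstacle to be precisely this last step: propagating the positive slope out to $\bu$, and in particular running the ascending path for $u$ and the descending path for $v$ together near the maximum of $u-v$ (the PDE--free substitute for the Aronsson operator forcing $Du$ and $Dv$ to be ``parallel'' there), together with the careful bookkeeping of the $(t,x)$ regions on which $\flow^t$, the convexity criterion, and the preservation statements are valid; it is to keep $L$ tractable there and to preserve the gradient bound after patching that \EQ{ch}(iii) and \SEC{patch} are needed, while boundedness of $H^{-1}(0)$ alone suffices elsewhere.
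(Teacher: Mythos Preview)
Your setup is largely right: you correctly identify the two key inputs (the equivalence in \THM{equivalences} giving the convexity/concavity criteria, and the patching approximation of \SEC{patch} to force $S^+u>0$), and you correctly note that \EQ{ch}(iii) is used only in patching. A small slip: your formula $\flow_t v=-\flow^t(-v)$ is only correct when $L$ is even; in general the paper defines $\flow_t$ with $L((x-y)/t)$, and the duality goes through $\hat H(p)=H(-p)$ (Remark~\ref{remcvxcav}).

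The real gap is in the core comparison step. What you describe as ``the argument of Le Gruyer and Archer'' --- iterating a uniform slope gain $2\gamma t$ along simultaneously ascending/descending paths out to $\bu$ --- is not that argument, and it runs into exactly the obstacle you flag: the maximizer $y_u$ for $\flow^t u(x)$ and the minimizer $y_v$ for $\flow_t v(x)$ are in general \emph{different} points, so there is no single ``next point'' to iterate from, and no mechanism forces the two paths to stay coupled near the max of $u-v$. The paper avoids this entirely. Instead of iterating, it applies $\flow^t$ and $\flow_t$ \emph{once}: setting $u^t:=\flow^t u$ and $v_t:=\flow_t v$, the convexity/concavity criteria together with the semigroup identity and \EQ{turns} give the discrete second--difference inequalities
\[
\flow^t u^t(x)+\flow_t u^t(x)-2u^t(x)\ge 0\ge \flow^t v_t(x)+\flow_t v_t(x)-2v_t(x).
\]
Then the Stationary Point Lemma (Lemma~\ref{SPlem}, the actual Le~Gruyer--Archer device) is a one--shot algebraic argument: if the max of $u^t-v_t$ is attained in the interior, pick $x_0$ where in addition $u^t$ is maximal on that max set; the chain of inequalities \EQ{LGstring} collapses to equalities, and the maximizer $z$ for $\flow^t u^t(x_0)$ is forced back into the same set, yielding $\flow^t u^t(x_0)=u^t(x_0)$. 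After patching, $S^+u>0$ rules this out, so the max of $u^t-v_t$ lies in the boundary strip, and one sends $t,r\downarrow 0$. No iteration, no path--following, and no need to couple the optimal points for $u$ and $v$.
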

  The requisite notions are:
\begin{definition}\label{sam}
A function $u \in \lloc(U)$ is called \emph{absolutely subminimizing} (respectively, \emph{absolutely superminimizing)  in $U$ with respect to $H$}, provided that \EQ{cov} and $u\ge v$ in $V$  (respectively, $u\le v$ in $V$) imply that  \EQ{dam} holds. 
\end{definition}

\subsection{The Proof of Theorem \ref{compth}} \label{outl}

The path we take through the proof of the comparison theorem involves extending much of the machinery used under other assumptions to the case under consideration, adding some new elements,  and combining it all just so. Rather than getting lost in the forest before bursting into the light near the end, let us sketch the skeleton of our arguments here. Flesh is added to the bones in the rest of this paper in the form of the proofs of results merely formulated in this section. 

A primary role in the proof will be played by the Hamilton-Jacobi flows mentioned in the introduction.  For our purposes, we take 
any $u: U \to \R,$ $x \in U,$ and $t> 0,$ and simply  define
\begin{equation} \label{defTt}
\flow^t u(x) : = \sup_{y\in U} \left( u(y) - t L\left( \frac{y-x}{t} \right) \right)
\end{equation}
and
\begin{equation} \label{defTtb}
\flow_t u(x) : = \inf_{y\in U} \left( u(y) + t L\left( \frac{x-y}{t} \right) \right).
\end{equation}
We also put 
\beq{flowz}
\flow^0u(x) :=\flow_0u(x):= u(x) 
\eeq
for $x\in U,$ which makes the maps $t\mapsto \flow^tu(x)$ and $t\mapsto \flow_t u(x)$ continuous from the right at $t=0$ (see Remark \ref{stcont}). We repeat that the Lagrangian $L: \R^n \to \R \cup \{ + \infty \}$ is the extended real-valued function defined by 
\begin{equation*}
L(q) := \sup_{p \in \R^n} \left( p\cdot q - H(p) \right):=\sup_{p \in \R^n} \left( \sum_{j=1}^n p_jq_j - H(p) \right).
\end{equation*}
Of course,
\begin{equation} \label{lzero}
L(0) = 0 \leq L(q) \quad \mbox{for every} \ q \in \R^n,
\end{equation}
 which follows from the definition of $L$ and $H(0)=\min_{\R^n}H=0,$ and this implies that 
\begin{equation} \label{flowup}
\inf_U u\le \flow_t u(x) \leq u(x) \leq \flow^t u(x) \le\sup_U u\quad \mbox{for all} \ x \in U, \  t > 0.
\end{equation}
In particular, $\flow^t u$ and $ \flow_t u$ are well defined and bounded if $u$ is bounded, as we will typically assume.  Also note that both $\flow^t$ and $ \flow_t$ are order preserving and commute with constants, that is, they have the properties of $S$ formulated below:
\beq{opcc}
u\le v\implies Su\le Sv, \is{and} S(u+c)=Su+c\is{for any constant} c. 
\eeq
Finally note that, as  defined above, the operators $\flow^t$ and $\flow_t$ depend on the underlying domain $U,$ although our notation does not explicitly display this dependence.

The following result  concerning how the absolutely subminimizing property of a function $u$ is reflected in the map $t\mapsto T^tu(x)$ is an important  component of the proof of Theorem \ref{compth}. 

\begin{prop}\label{ccrith} Let $u\in\lloc(U)$ be bounded and absolutely subminimizing with respect to $H$ in $U,$ and let $V\ll U.$  Then there exists $\delta>0$ such that \beq{ccri}
\mbox{the map} \ t\mapsto \flow^tu(x)\quad \mbox{is convex on the interval} \ [0,\delta] \quad \mbox{for each} \ x\in V. 
\eeq
\end{prop}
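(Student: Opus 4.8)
The plan is to obtain \EQ{ccri} by combining three ingredients: the semigroup property of the flows $\flow^t$ (valid for small $t$ on a slightly shrunken domain, by finite speed of propagation), the fact that the absolutely subminimizing property of $u$ is inherited by $\flow^s u$ in the form of a \emph{comparison with cones from above} near the points of $V$, and the elementary observation that a continuous function on an interval whose forward difference quotients are nondecreasing from every base point is convex there. So, first, I would fix $W$ with $V\ll W\ll U$ and record the quantitative facts about \EQ{defTt}: since $H$ is finite--valued, $L$ has bounded sublevel sets, so for $x\in V$ and $t$ small compared with $\dist(V,\partial W)$ and the oscillation of $u$, the supremum defining $\flow^t u(x)$ is attained at some $y$ with $|y-x|\le Ct$, hence $y\in W$. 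Consequently $\flow^t u(x)$ is unchanged if the supremum runs over $W$ instead of $U$, and the semigroup identity
\[
\flow^{s+r}u(x)=\flow^{s}\bigl(\flow^{r}u\bigr)(x),\qquad x\in V,\ \ s,r,s+r\in[0,2\delta_0],
\]
holds, being a consequence of the inf-convolution identity $(s+r)L\bigl(\tfrac{q}{s+r}\bigr)=\inf_{q_1+q_2=q}\bigl(sL(\tfrac{q_1}{s})+rL(\tfrac{q_2}{r})\bigr)$ that follows from convexity of $L$. The numbers $\delta_0$ and the final $\delta\le\delta_0$ will be dictated by $\dist(V,\partial W)$, $\dist(W,\partial U)$, the Lipschitz constant of $u$ on $\overline W$, and the growth of $L$; here the hypothesis that $H^{-1}(0)$ is bounded enters through the requirement that $0$ lie in the interior of the domain of $L$, which is what makes the cones below nondegenerate.

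The substantive step is to show that for each small $s$ the function $w:=\flow^s u$ enjoys comparison with cones from above near $x$. The cones adapted to $H$ — the analogues of the linear and conical functions of the Euclidean and normed theories — are the functions $y\mapsto a+\rho_b(y-z)$, where $a\in\R$, $z\in\R^n$, $b\ge 0$, and $\rho_b$ is the support function of the bounded convex set $\{p:H(p)\le b\}$; a Fenchel--duality computation, exactly parallel to the evaluation of \EQ{defTt} on affine functions, shows that $\flow^r$ translates such a cone upward by $br$. Granting that $u$ absolutely subminimizing forces $w=\flow^s u$ to admit comparison with cones from above — this is where \DEF{sam} is actually used, together with stability of the absolutely subminimizing property under the flow — I would compare $w$ on a small sublevel neighborhood of $x$ with an $H$-cone centered at $x$ of the least admissible slope; reading off the resulting pointwise inequality on a smaller sublevel set, and rewriting the relevant suprema of $w$ via the semigroup identity and the cone-translation identity, should yield
\[
\frac{\flow^{s+r'}u(x)-\flow^{s}u(x)}{r'}\ \le\ \frac{\flow^{s+r}u(x)-\flow^{s}u(x)}{r}\qquad\text{whenever }0<r'<r,\ \ s+r\le\delta.
\]
That the forward difference quotients of $t\mapsto\flow^t u(x)$ are nondecreasing from every base point $s\in[0,\delta)$ is, after an elementary rearrangement, precisely the statement that this map is convex on $[0,\delta]$; together with its continuity (\REM{stcont}) this gives \EQ{ccri}.

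I expect the main obstacle to be the substantive step just described: establishing that $\flow^s u$ remains absolutely subminimizing — equivalently, enjoys comparison with cones from above — on a slightly smaller domain, and carrying out the cone-comparison bookkeeping in the generality of \EQ{ch}. In the normed case treated in \cite{AS}, $L$ is the indicator function of the dual unit ball, so $\flow^r w(x)$ is literally a maximum of $w$ over a ball and the cone sublevel sets are balls, which makes the identifications of the various suprema transparent; for a general convex $H$, $L$ takes intermediate finite values (already for $H(p)=\tfrac12|p|^2$), $\flow^r w(x)$ is only controlled by a supremum over the perspective set $x+r\,\overline{\{L<\infty\}}$ rather than by a value on a cone level set, the sets $\{H\le b\}$ are no longer dilates of one another, and the interplay between the flow and the cones has to be handled with the elementary lemmas and cone estimates developed in Sections \ref{elrem}--\ref{cb} and \ref{amtctc}, on which this proof will lean. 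An essentially equivalent route avoids cones: from a hypothetical failure of \EQ{ccri} one constructs, on a suitable cone-sublevel region, a competitor assembled from pieces of $H$-cones whose $L^\infty$ norm of $H(\,\cdot\,)$ applied to the gradient strictly undercuts that of $u$, contradicting \DEF{sam}; but this is a repackaging of the same computation, and either way the generality of \EQ{ch} — rather than the mere boundedness of $H^{-1}(0)$ — is what demands extra care.
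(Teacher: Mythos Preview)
Your plan is correct and coincides with the paper's own route: pass from absolutely subminimizing to comparison with cones from above (Proposition~\ref{aml-cca}), observe that $\flow^s u$ inherits comparison with cones from above on a slightly shrunken domain because it is a supremum of translates of $u$, and then exploit the cone--flow interaction (Lemmata~\ref{neighbor} and~\ref{coneflowl}) to deduce the difference--quotient inequality that yields convexity; this is exactly Proposition~\ref{cca-conv} via Lemma~\ref{HJconvex}. One small refinement: the paper does not prove your displayed monotonicity of secant slopes directly, but rather the a~priori weaker statement $S^+(\flow^s u)(x)\le (\flow^t u(x)-\flow^s u(x))/(t-s)$, and then closes the gap by first checking that $t\mapsto \flow^t u(x)$ is Lipschitz (via Remark~\ref{stlip}) so that the elementary derivative argument applies; you should expect to need the same Lipschitz step.
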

For convenience, we give the property defined by the conclusion of this theorem a name, along with a name for the corresponding property enjoyed by absolutely superminimizing functions.

\begin{definition}\label{dccri} A bounded function $u\in  C(U)$ \emph{satisfies the convexity criterion in $U$} provided that for every $V\ll U$ there exists $\delta>0$ such that \EQ{ccri} holds. Likewise, we say that $u$ \emph{satisfies the concavity criterion in $U$} if for every $V\ll U$ there exists $\delta>0$ such that 
\beq{cacri}
\mbox{the map} \ t\mapsto \flow_tu(x)\  \mbox{is concave on the interval} \ [0,\delta] \  \mbox{for each} \ x\in V.  
\eeq
\end{definition}
\begin{remark} 
\label{remcvxcav} The reader will note that Proposition \ref{ccrith} makes no assertion relating  ``absolutely superminimizing" and the concavity criterion. This is designed to highlight the current remark to the effect that it is not necessary to do so.  Set $\hat H(p):=H(-p)$ and observe that $\hat H$ satisfies \EQ{ch} in place of $H.$  Then, by the definitions, $u$ is absolutely superminimizing for $H$ in $U$ if and only if $-u$ is absolutely subminimizing for $\hat H$ in $U.$  The Lagrangian $\hat L$ of $\hat H$ is given by $\hat L(q)=L(-q).$  It  follows from Proposition \ref{ccrith} that if $u$ is absolutely superminimizing for $H$ in $U,$ then $-u$ satisfies the convexity criterion generated by $\hat L,$ which is the assertion that for every $V\ll U$ there is a $\delta>0$ such that the map
\beqs
t\mapsto \sup_{y\in U}\lp -u(y)-t \hat L\lp\frac{y-x}t\rp\rp=-\inf_{y\in U}\lp u(y)+t  L\lp\frac{x-y}t\rp\rp
\eeqs
is convex on $[0,\delta]$ for each $x\in V.$ This is equivalent to the statement that the map $t\mapsto \flow_t u(x)$ is concave on the same interval. 
\end{remark}  

Proposition \ref{ccrith} follows immediately from the initial results of Section \ref{amtctc}.    The  following result, which has to do with domain of dependence and speed of propagation issues for the Hamilton-Jacobi flows, is called upon often along the way. For $u:U\to \R,$ we use the notation 
\beq{dosc}
\osc_U u := \sup_Uu -\inf_Uu.
\eeq

\begin{lem} \label{local}  Let $u:U\ra\R$ be upper semicontinuous and bounded. Then
for every $\alpha, r > 0,$ there exists $t_0 = t_0 (\alpha,r) > 0$ such that for all $0 < t < t_0$ and $x\in U,$
\begin{equation} \label{elocal}
\flow^t u(x) = \sup_{y\in  B(x,r) \cap U} \left( u(y) - t L\left( \frac{y-x}{t} \right) \right)\!,
\end{equation}
provided that $\osc_U u \leq \alpha.$ Moreover, if $t,s>0$ satisfy $t+s < t_0(\alpha,r)$ and $\dist(x,\partial U) > r,$ then
\begin{equation} \label{uepadd}
\flow^{t+s}u(x) = \flow^t \left( \flow^s u\right)(x).
\end{equation}
\end{lem}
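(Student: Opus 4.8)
The plan is to exploit the coercivity of the Lagrangian $L$ coming from the boundedness of $H^{-1}(0)$. Since $H^{-1}(0)$ is bounded, say contained in $\bar B(0,R)$, every $p$ with $|p| > R$ has $H(p) > 0$; more to the point, convexity of $H$ together with $H(0)=0$ and the bound on the zero set forces $H(p) \geq c(|p|-R)$ for some $c>0$ once $|p|$ is large, which upon taking Legendre transforms gives a \emph{superlinear} lower bound $L(q) \geq |q|\,\rho(|q|)$ where $\rho(s)\to\infty$ as $s\to\infty$ (equivalently, $L(q)/|q| \to \infty$). The first step is to record this coercivity estimate precisely: for every $M>0$ there is $N>0$ with $L(q) \geq M|q|$ whenever $|q| \geq N$.

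For \eqref{elocal}: fix $x\in U$ and any competitor $y\in U$ in the supremum defining $\flow^tu(x)$. Write $|y-x| = \tau$. If $\tau \geq r$, then $|(y-x)/t| = \tau/t \geq r/t$, so for $t$ small the coercivity bound gives $tL((y-x)/t) \geq t\cdot M\cdot(\tau/t) = M\tau \geq Mr$. Choosing $M = \alpha/r$ (so $M r = \alpha$), we get $u(y) - tL((y-x)/t) \leq \sup_U u - \alpha \leq \inf_U u \leq u(x) = u(x) - 0\cdot L(0)$, using $\osc_U u \leq \alpha$ and $L(0)=0$ from \eqref{lzero}. Hence the competitor $y=x$ already does at least as well, so the supremum is unchanged if we restrict to $|y-x| < r$; this is \eqref{elocal}, with $t_0(\alpha,r)$ determined by the threshold $N$ from the coercivity estimate applied with $M=\alpha/r$, namely any $t_0$ small enough that $r/t_0 \geq N$.

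For \eqref{uepadd}: this is the semigroup property, and it reduces to a case of the general Hopf--Lax formula $\inf_z\big(tL(\tfrac{a}{t}) + sL(\tfrac{b}{s})\big) = (t+s)L\big(\tfrac{a+b}{t+s}\big)$, valid because $L$ is convex with $L(0)=0$ (so $rL(\cdot/r)$ is the "$r$-dilate" and these dilates compose additively by convexity — Jensen's inequality in one direction, and the explicit minimizer $z = \tfrac{s}{t+s}a + \tfrac{t}{t+s}b$... wait, one needs the split point, i.e.\ $z$ is the intermediate point — in the other). Concretely: expanding $\flow^t(\flow^su)(x) = \sup_{z}\big(\sup_{y}(u(y) - sL(\tfrac{y-z}{s})) - tL(\tfrac{z-x}{t})\big) = \sup_{y}\big(u(y) - \inf_z(sL(\tfrac{y-z}{s}) + tL(\tfrac{z-x}{t}))\big)$, and the inner infimum over $z\in U$ equals $(t+s)L(\tfrac{y-x}{t+s})$ provided the optimal intermediate point $z^* = x + \tfrac{t}{t+s}(y-x)$ lies in $U$. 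This is where the hypothesis $\dist(x,\partial U) > r$ and $t+s < t_0(\alpha,r)$ enters: by \eqref{elocal} we may assume $|y-x| < r$ in the outer supremum, and then $|z^* - x| = \tfrac{t}{t+s}|y-x| < r$, so $z^* \in B(x,r) \subset U$; the localization also ensures $\flow^su(z)$ is computed using points near $z$, all still inside $U$.

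The main obstacle I anticipate is handling the domain restriction to $U$ carefully in the semigroup identity: the clean Hopf--Lax composition holds for the flows on all of $\R^n$, but here the suprema and infima range only over $U$, so one must verify that the relevant optimal points (the intermediate point $z^*$ and the points realizing $\flow^su(z^*)$) all remain in $U$ — which is exactly what the finite-speed-of-propagation estimate \eqref{elocal} and the buffer $\dist(x,\partial U)>r$ are designed to guarantee. A secondary point requiring care is the direction of the Hopf--Lax inequality that is not immediate from Jensen's inequality: showing the infimum over $z$ is actually \emph{attained} (or approached) at $z^*$, which uses convexity of $L$ and a compactness/coercivity argument to restrict $z$ to a bounded set.
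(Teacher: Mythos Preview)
Your proposal is correct and follows essentially the same route as the paper: superlinear growth of $L$ (coming from boundedness of $H^{-1}(0)$) for the localization \eqref{elocal}, and convexity of $L$ together with the explicit intermediate point $z^*=x+\tfrac{t}{t+s}(y-x)$ for the semigroup identity \eqref{uepadd}. The paper selects a maximizing $y$ for $\flow^{t+s}u(x)$ via upper semicontinuity and then checks both inequalities at that $y$; you instead swap the two suprema and reduce to $\inf_z\bigl(sL(\tfrac{y-z}{s})+tL(\tfrac{z-x}{t})\bigr)=(t+s)L(\tfrac{y-x}{t+s})$ whenever $z^*\in U$ --- these are the same computation.

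One small correction to your self-assessment: your ``secondary point'' at the end, about needing a compactness/coercivity argument to show the infimum over $z$ is attained at $z^*$, is a phantom difficulty. Jensen gives $\inf_z(\cdots)\geq (t+s)L(\tfrac{y-x}{t+s})$, and plugging in $z=z^*$ gives equality by direct computation --- no compactness needed. The only genuine issue, which you already identified correctly as your ``main obstacle,'' is ensuring $z^*\in U$; and for that it is the localization \eqref{elocal} applied to $\flow^{t+s}u(x)$ (restricting the \emph{target} of the $\geq$ direction to competitors $y\in B(x,r)$) that does the work, not a restriction on the sup defining $\flow^t(\flow^s u)(x)$ itself.
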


We prove Lemma \ref{local} in the following subsection.  

In addition to the identity \EQ{uepadd}, valid for sufficiently small $t,s>0,$ we recall the properties \EQ{flowup}, valid for any $t>0.$  Also, directly from the definitions, we have  
\begin{equation} \label{turns}
\flow^t\left( \flow_t u\right) (x) \leq u(x) \leq \flow_t\left( \flow^t u\right) (x)\quad \mbox{for all} \ x \in U, \  t > 0.
\end{equation}

Now suppose $U$ is bounded and that $u, v\in C(\uc)$ are, respectively, absolutely subminimizing and absolutely superminimizing for $H$ in $U$ and $\osc_U u, \osc_Uv\le \alpha.$ Let $r>0$ (it will be sent to zero shortly). 
By Proposition \ref{ccrith}, the map $t \mapsto T^tu(x)$ is convex and $t \mapsto T_t v(x)$ is concave on $[0, \delta)$ for every $x \in U_{2r}$ for some $\delta >0.$   Select $0 < t <  \min\left\{ t_0(\alpha,r/2), \delta \right\}/2$ and denote $u^t (x):= \flow^t u(x)$ and $v_t(x) := \flow_t v(x).$ Using \EQ{turns}, \EQ{uepadd}, the convexity criterion for $u$ and the concavity criterion for $v,$ we obtain
\begin{equation*}
\flow^t u^t(x) + \flow_t u^t(x) - 2u^t(x) \geq \flow^{2t} u(x) + u(x) - 2 \flow^t u(x) \geq 0,
\end{equation*}
as well as
\begin{equation*}
\flow^t v_t(x) + \flow_t v_t(x) - 2v_t(x) \leq v(x) + \flow_{2t}v(x) - 2 \flow_t v(x) \leq 0.
\end{equation*}

That is, $u^t=\flow^t u$ and $v_t=\flow_tv$ satisfy the hypotheses placed on $\f$ and 
$\g$  in the following lemma. See also Remark \ref{stinx}.

\begin{lem}[Stationary point lemma] \label{SPlem}
Assume that $U$ is bounded,  $\alpha, r > 0,$ and $\f,\g\in C(\bar U_r ),$ $\osc_{U} \f, \osc_{U} \g \leq \alpha,$ and that for some $0 < t < t_0(\alpha,r),$
\begin{equation}\label{findiff}
\flow^t \f(x) + \flow_t \f(x) - 2\f(x) \geq 0 \geq \flow^t \g(x) + \flow_t \g(x) - 2\g(x)   
\end{equation}
for every $x \in U_{2r}.$ Then either
\begin{equation}\label{maxok}
\max_{\bar U_r} (\f-\g) = \max_{\bar U_r\setminus U_{2r}} (\f-\g),
\end{equation}
or else there exists a point $x_0 \in U_{2r}$ such that
\begin{equation} \label{statpt}
\f(x_0) = \flow^t \f(x_0) = \flow_t \f(x_0) \quad  \mbox{and} \quad  \g(x_0) = \flow^t \g(x_0) = \flow_t \g(x_0).
\end{equation}
\end{lem}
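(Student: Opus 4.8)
The plan is to argue by contradiction with \EQ{maxok}, showing that its failure ``rigidifies'' $\f$ and $\g$ along the flows on the contact set and thereby produces the point $x_0$. Suppose then that $M:=\max_{\bar U_r}(\f-\g)$ strictly exceeds $\max_{\bar U_r\setminus U_{2r}}(\f-\g)$, and set $K:=\{x\in\bar U_r:\f(x)-\g(x)=M\}$. This $K$ is nonempty, compact, and disjoint from $\bar U_r\setminus U_{2r}$, hence $K\subset U_{2r}$, where \EQ{findiff} is in force. The first step is to record, for every $x\in K$, the two cheap one-sided bounds $\flow^t\f(x)-\flow^t\g(x)\le M$ and $\flow_t\f(x)-\flow_t\g(x)\le M$. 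By \LEM{local} (valid since $t<t_0(\alpha,r)$) the supremum defining $\flow^t\f(x)$ is attained at some $y\in B(x,r)$, and since $x\in U_{2r}$ this ball lies in $U_r\subset\bar U_r$, where $\f\le\g+M$; combining with the trivial lower bound $\flow^t\g(x)\ge\g(y)-tL((y-x)/t)$ gives $\flow^t\f(x)-\flow^t\g(x)\le\f(y)-\g(y)\le M$, and the backward bound follows the same way from a minimizer of $\flow_t\g(x)$.

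The crux is then to subtract the two inequalities in \EQ{findiff} at a point $x\in K$: this yields $(\flow^t\f(x)-\flow^t\g(x))+(\flow_t\f(x)-\flow_t\g(x))\ge 2(\f(x)-\g(x))=2M$. Since each summand is at most $M$ by the previous step, both must equal $M$, so $\flow^t\f-\flow^t\g\equiv M$ and $\flow_t\f-\flow_t\g\equiv M$ on $K$. Revisiting the equality case of the estimate $\flow^t\f(x)-\flow^t\g(x)\le\f(y)-\g(y)\le M$ then shows that \emph{any} maximizer $y$ of $\flow^t\f(x)$, with $x\in K$, itself lies in $K$.

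To finish, choose $x_0\in K$ maximizing $\f$ over the compact set $K$, and let $y_0$ be a maximizer of $\flow^t\f(x_0)$. By the previous paragraph $y_0\in K$, so $\f(y_0)\le\f(x_0)$, and hence $\flow^t\f(x_0)=\f(y_0)-tL((y_0-x_0)/t)\le\f(y_0)\le\f(x_0)$; together with $\f(x_0)\le\flow^t\f(x_0)$ from \EQ{flowup} this forces $\flow^t\f(x_0)=\f(x_0)$. The remaining three identities in \EQ{statpt} are then automatic: \EQ{findiff} for $\f$ gives $\flow_t\f(x_0)\ge 2\f(x_0)-\flow^t\f(x_0)=\f(x_0)$, so $\flow_t\f(x_0)=\f(x_0)$ by \EQ{flowup}; and since $x_0\in K$, the identities $\flow^t\f-\flow^t\g\equiv M\equiv\flow_t\f-\flow_t\g$ on $K$ combined with $\f(x_0)-\g(x_0)=M$ give $\flow^t\g(x_0)=\flow^t\f(x_0)-M=\g(x_0)$ and likewise $\flow_t\g(x_0)=\g(x_0)$.

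The genuinely new point is the one in the second paragraph: on the contact set, subtracting the two finite-difference inequalities and playing them against the two essentially free upper bounds pins the flow gaps to exactly $M$, after which the $\f$-largest point of $K$ is forced to be a flow fixed point. Everything else is routine — existence of the maximizers and minimizers follows from lower semicontinuity of $L$ and compactness of $\bar B(x,r)$, and one only has to keep track, via \LEM{local}, that for $x\in U_{2r}$ all competing points stay inside $\bar U_r$, so that the pointwise inequality $\f\le\g+M$ there may legitimately be used.
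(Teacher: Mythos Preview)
Your proof is correct and follows essentially the same approach as the paper's: both arguments work on the contact set $K=E$, pick $x_0$ to be a point of $K$ where $\f$ is maximal, show that any maximizer $y$ of $\flow^t\f(x_0)$ must again lie in $K$, and then use $\f(y)\le\f(x_0)$ together with $L\ge 0$ to force $\flow^t\f(x_0)=\f(x_0)$. The only cosmetic difference is that the paper invokes the order-preserving, constant-commuting properties \EQ{opcc} of $\flow^t,\flow_t$ to obtain the string \EQ{LGstring} directly at $x_0$, whereas you unpack these properties by hand via explicit maximizers and minimizers, first establishing $\flow^t\f-\flow^t\g\equiv M\equiv\flow_t\f-\flow_t\g$ on all of $K$ before specializing to $x_0$; the substance is identical.
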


Lemma \ref{SPlem} is proved in Section \ref{pspl}.

In Section  \ref{patch}, we reduce the proof of Theorem \ref{compth} to establishing it under the additional hypothesis 
\beq{ac}
S^+u(x) := \limsup_{s\downarrow 0}\frac{\flow^s u(x)-u(x)}{s}>0\quad \mbox{for every} \ x\in U.  
\eeq
Indeed, for $\gamma>0,$ Lemma \ref{patching} provides an absolutely subminimizing function $u_\gamma\le u$ which agrees with $u$ on $\bu,$ and for which the quantity on the left side of \EQ{ac} is at least $\gamma$ everywhere, and such that $u_\gamma\ra u$ as $\gamma\downarrow 0.$  
The convexity criterion implies that
\beqs
\frac{\flow^{t}u^t(x)-u^t(x)}t=\frac{\flow^{2t}u(x)-\flow^tu(x)}t\ge \frac{\flow^{t}u(x)-u(x)}t\ge \limsup_{s\downarrow 0}\frac{\flow^s u(x)-u(x)}{s}
\eeqs
for all $t>0$ in the interval of convexity. Thus, if \EQ{ac} holds, then there is no point $x_0$ satisfying $\flow^tu^t(x_0)=u^t(x_0)$ as in \EQ{statpt}, and we must therefore have the alternate possibility of \EQ{maxok}. That is, for all $t>0$ sufficiently small we have
\begin{equation}\label{maxoka}
\max_{\bar U_r} (u^t-v_t) = \max_{\bar U_r\setminus U_{2r}} (u^t-v_t).
\end{equation}
We then obtain \EQ{ceqa} upon sending $t\downarrow 0$ while invoking Remark \ref{stcont} below, and then sending $r\downarrow 0.$

At this point, we have proved Theorem \ref{compth}, pending proofs of the results cited in this discussion.
 
\subsection{Elementary Remarks About $H,$ $L,$ $\flow_t$ and $\flow^t$} \label{elrem}
 
In this subsection we review the basic facts we need regarding the relationship between $H$, $L$ and the Hamilton-Jacobi flows $T^t$ and $T_t$. We refer to Evans \cite{E} for more background, including a derivation of the Hopf-Lax formula for solutions of the Hamilton-Jacobi equation.

According to \EQ{ch}, the zero level set of $H$ is contained in the ball $B(0,R_0)$ for some $R_0 >0.$ Set
\beqs
k_0 :=\inf_{|p|= R_0}H(p)>0.
\eeqs
By the convexity of $H$ and $H(0)=0,$ we have
\beq{hcoer}
H(p)\ge \frac{k_0}{R_0} |p| \quad \mbox{for all} \ |p| \geq R_0.
\eeq
Similarly, if $H(p) >0,$ then the map
\beq{noflata}
t\mapsto H(tp) \quad \mbox{is strictly increasing on the interval} \  \left[\hat t,\infty\right),
\eeq
where $\hat t : =\sup\set{t\in[0,\infty): H(tp)=0}.$ In particular, for all $k\geq0,$ the level set
\beq{noflat}
H^{-1}(k):=\set{p:H(p)=k} \quad \mbox{has empty interior.} 
\eeq

The Lagrangian $L$ is obviously convex and we have already noted that it satisfies \EQ{lzero}. Observe that we may write $H$ in terms of $L$ as
\begin{equation} \label{HbyL}
H(p) = \sup_{q\in \R^n} \left( p\cdot q - L(q) \right).
\end{equation}
Indeed, by the definition of $L$ the right side of \EQ{HbyL} is equal to
\begin{equation*}
\sup_{q} \inf_{\tilde p} \left( q\cdot (p - \tilde p) + H(\tilde p) \right),
\end{equation*}
which, by considering the choices $\tilde p = p$ and $q\in \partial H(p),$ is seen to be equal to $H(p).$

In view of \EQ{hcoer}, if $|p|\ge R_0,$ then we have
\beqs
q\cdot p-H(p)\le \lp|q| -\frac{k_0}{R_0}\rp|p|.
\eeqs 
If $|q|< k_0/R_0,$ it follows that 
\beq{lfinite}
L(q)=\max_{|p|\le R_0}(q\cdot p-H(p))<\infty. 
\eeq
Observe also that for any $\alpha > 0,$ 
\begin{equation*}
\liminf_{|q| \to \infty} \frac{L(q)}{|q|} \geq \liminf_{|q|\to \infty} \left( \alpha - |q|^{-1} \max_{|p| \leq \alpha} H(p) \right) = \alpha.
\end{equation*}
Therefore, there is a function $M:[0,\infty)\ra [0,\infty)$ such that
\beq{pm}
M\ \mbox{is nondecreasing,}\quad \lim_{r\ra\infty} M(r)=\infty, \quad \mbox{and} \ L(q)\ge M(|q|)|q|. 
\eeq

\begin{proof}[{\bf Proof of Lemma  \ref{local}}]

According to \EQ{pm}, we may select $t_0 > 0$ so small that 
\begin{equation*}
M\lp\frac r{t_0}\rp> \frac\alpha r+1. 
\end{equation*}
Then for any $0 < t < t_0$ and $x \in U,$ we have, by the above and \EQ{pm}, 
\begin{equation*}
\begin{aligned}
\sup_{y \in U \setminus B(x,r)} \Big( u(y) - t L\left( \frac{y-x}{t} \right) \Big)& \leq \sup_U u - r M\lp\frac rt\rp\\ & <\sup_U u -\osc_Uu-r<\inf_U u\le u(x).
\end{aligned}
\end{equation*}
According to \EQ{flowup}, we deduce that \EQ{elocal} must hold.

Suppose now that $s,t> 0$ are such that $t+s < t_0(\alpha,r)$ and $\dist(x,\partial U) > r.$ In view of  the upper semicontinuity of $u$ and \EQ{elocal}, we may select $y \in \bar B(x,r)$ such that 
\begin{equation}\label{tps}
\flow^{t+s}u(x) = u(y) - (t+s) L\left( \frac{y-x}{t+s} \right).
\end{equation}
The first three lines below are valid for any $y, z\in U,$ while for  the fourth line we have put $z:=(ty+sx)/(t+s).$   We have
\begin{align*}  
\flow^t \left( \flow^s u\right)(x) & =  \sup_{\tilde z\in U}\lp \flow^su(\tilde z)-t\la{\tilde z-x}t\rp\\
&= \sup_{\tilde z \in U}\sup_{\tilde y\in U} \left( u(\tilde y) - sL\left( \frac{\tilde y - \tilde z}{s}\right) - t L\left( \frac{\tilde z - x}{t} \right) \right) \\
& \geq u(y) - s L\left( \frac{y - z}{s}\right) - t L\left( \frac{z - x}{t} \right) \\
& = u(y) - (t+s) L \left( \frac{y-x}{t+s} \right).
\end{align*}
From the above and \EQ{tps}, we conclude that
\beqs
\flow^t(\flow^su)(x)\ge \flow^{t+s}u(x). 
\eeqs
On the other hand, for all $\tilde y$ and $\tilde z$ we have
\beqs
(t+s)\la{\tilde y-x}{t+s}\le sL\left( \frac{\tilde y - \tilde z}{s}\right) + t L\left( \frac{\tilde z - x}{t} \right), 
\eeqs
because $L$ is convex and 
\beqs
 \frac{\tilde y-x}{t+s}= \frac s{t+s}    \frac{\tilde y - \tilde z}{s}  +\frac t{t+s} \frac{\tilde z - x}{t}.
\eeqs
It therefore follows from the second  expression for $\flow^t(\flow^su)(x)$ above that we always have
\beqs
\flow^t(\flow^su)(x)\le \flow^{t+s}u(x), 
\eeqs
completing the proof.
\end{proof}

\begin{remark}\label{indu} It was noted before that our definition of $\flow^t u(x)$ ``depends on $U$", the assumed domain of definition of $u.$  However, the proof just given shows that 
\beq{indep}
T^tu(x)=\max_{y\in \bar B(x,r)}\left( u(y)-t\la{x-y}t\right)
\eeq
whenever $t<t_0(\alpha,r)$ and $x\in U_r,$ no matter the choice of $U,$ provided $\osc_Uu\le\alpha.$   
\end{remark}
\begin{remark}\label{stcont} 
It follows from Lemma \ref{local} and \EQ{flowup} that if $u\in C(U)$ is bounded, then 
\beqs
 \lim_{t\downarrow 0}\flow^t u=\lim_{t\downarrow 0}\flow_t u=u
\eeqs
holds uniformly on compact subsets of $U.$ Indeed, fixing $r>0,$ for $t<t_0(\osc_Uu,r),$ 
\beqs
u(x)\le \flow^t u(x)\le \sup_{\bar B(x,r)\cap U} u.
\eeqs
If $u$ is bounded and uniformly continuous, then the convergence is uniform on $U.$ 
\end{remark}

\begin{remark}\label{stlip}
For later use, we refine Remark \ref{stcont} in the case that $u$ is locally Lipschitz continuous.  Suppose that $x\in U,$ $r>0,$ 
\beqs
|u(y)-u(x)|\le K|x-y|\iq{for} y\in B(x,r)\cap U, 
\eeqs
 and $\osc_Uu<\infty.$ According to \EQ{pm}, we may select $a> 0$ so large that $L(z) > K|z|$ for every $|z| > a.$ Then
\begin{equation*}
K|x-y| - t L\left( \frac{y-x}{t} \right) < 0 \quad \mbox{for all} \ |x-y| > at.
\end{equation*}
Therefore, if $0 < t < t_0(\osc_U u, r),$ then 
\begin{equation}\label{lipvarphi}
\begin{aligned}
\flow^t u(x) - u(x) & = \sup_{y\in B(x,r)\cap U} \left( u(y) - u(x) - t L\left( \frac{y-x}{t} \right) \right)  \\
& \leq \sup_{y \in B(x,r)} \left( K|y-x| - t L\left( \frac{y-x}{t} \right) \right) \\
& \leq \sup_{y \in B(x,at)} \left( K|y-x| - t L\left( \frac{y-x}{t} \right) \right) \\
& \leq (aK)t.
\end{aligned}
\end{equation}
\end{remark}

\begin{remark}\label{stinx}
We have not yet discussed any properties of the map 
\beq{xdep}
x\mapsto \flow^tu(x).
\eeq
We assume the notation of Lemma  \ref{local}. Suppose that $u \in  C(U)$ and $\osc_Uu\le\alpha.$ For $x\in U_{r}$ and $t<t_0(\alpha,r)$ we may write (\ref{xdep}) as
\beqs
x\mapsto\flow^t u(x)=\sup_{z \in B(0,r)}\lp u(z+x)-t\la{z}t\rp.
\eeqs
Suppose $x_0\in U_r$ and $\dist(x_0,\bu)=r_0>r.$  The maps $x \mapsto u(z+x),$  $z\in B(0,r),$ are equicontinuous in $x$ on $B(x_0,r_1)$ for any $0<r_1<r_0-r.$   Therefore   $\flow^t u \in C( B(x_0,r_1)),$ since it is the supremum of a family of equicontinuous functions on $ B(x_0,r_1)$ which are uniformly bounded above. As $x_0\in U_r$ is arbitrary, we deduce that $\flow^t u \in C( U_r).$    
\end{remark}

\subsection{Proof of the Stationary Point Lemma}
\label{pspl}

We note that  Lemma \ref{SPlem} is a generalization of \cite[Theorem 3.3]{LeGA}, which also appeared without proper attribution as \cite[Lemma 4]{AS}. Our proof follows along similar lines as the argument in \cite{LeGA}, although the situation is more complicated in our context of a general convex Hamiltonian $H.$ In particular, the conclusion is weaker and includes the extra alternative \EQ{statpt}, which is ruled out in our  proof of Theorem \ref{compth} as explained above. The   alternative \EQ{statpt} in the statement is essential; it is not true that \EQ{maxok} holds in general. This follows from the observation that the comparison theorem does not hold if the zero level set of $H$ has nonempty interior, and the empty interior assumption is utilized in our proofs only to rule out \EQ{statpt}  in applying the lemma to suitable approximations.

We also note that the lemma stands alone in the sense that its only use herein is in the proof of Theorem \ref{compth}. Thus this subsection may be deferred without compromising the reading of other parts of this paper. 

\begin{proof}[{\bf Proof of Lemma \ref{SPlem}}]
Consider the case that \EQ{maxok} fails. Then the set 
\begin{equation*}
E:= \left\{ x\in \bar U_r : (\f-\g)(x) = \max_{\bar U_r} (\f-\g) \right\}
\end{equation*}
is nonempty, closed, and contained in $U_{2r}.$ Define the set
\begin{equation*}
F:= \left\{ x \in E : \f(x) = \max_E \f \right\}
\end{equation*}
which is nonempty, closed, and contained in $E\subseteq U_{2r}.$ Consider a point $x_0 \in F.$ Since $x_0 \in E,$ we see that
\begin{equation} \label{xnot}
 \g(y) - \g(x_0) \geq \f(y)-\f(x_0) \quad \mbox{for every} \ y \in U_r.
\end{equation}
Using  \EQ{findiff},  \EQ{xnot} and \EQ{opcc}, and \EQ{findiff} again, in that order, together with $x_0\in U_{2r},$ and  $t < t_0(\alpha,r),$ we discover 
\begin{equation}\label{LGstring}
\begin{aligned}
\flow^t \f(x_0) - \f(x_0) \geq \f(x_0) - \flow_t \f(x_0)  \ge \g(x_0) - \flow_t \g(x_0) \geq \flow^t \g(x_0) - \g(x_0).
\end{aligned}
\end{equation}
Using \EQ{xnot} and \EQ{opcc} again, we also have $\flow^t \f(x_0) - \f(x_0) \leq \flow^t \g(x_0) - \g(x_0),$ and thus we must have equality in every inequality of \EQ{LGstring}. That is,
\begin{equation}\label{xnoteqs}
\begin{aligned}
\flow^t \f(x_0) - \f(x_0) =  \f(x_0) - \flow_t \f(x_0) = \g(x_0) - \flow_t \g(x_0) = \flow^t \g(x_0) - \g(x_0).
\end{aligned}
\end{equation}
Select a point $z\in \bar B(x_0,r) \subseteq U_r$ such that 
\begin{equation*}
\flow^t \f(x_0) = \f(z) - tL\left(\frac{z-x_0}{t} \right).
\end{equation*}
Observe that then, via \EQ{xnoteqs}, 
\begin{align*}
\g(z) & \leq tL\left( \frac{z-x_0}{t} \right) + \flow^t \g(x_0) \\
& = tL\left( \frac{z-x_0}{t} \right) + \flow^t \f(x_0) + \g(x_0) - \f(x_0) \\
& = \f(z) + \g(x_0) - \f(x_0).
\end{align*}
Therefore $\f(z) - \g(z) \geq \f(x_0) - \g(x_0)$ and thus $z \in E.$ Recalling that $x_0 \in F,$ we see that 
\begin{equation*}
\f(z) \leq \f(x_0) \leq \flow^t \f(x_0) = \f(z) - tL\left( \frac{z-x_0}{t} \right) \leq \f(z).  
\end{equation*}
Thus $\f(x_0) = \flow^t \f(x_0).$ Recalling \EQ{xnoteqs}, we obtain \EQ{statpt}.
\end{proof}

\subsection{Cone Basics}\label{cb}

We begin this subsection by noticing that \EQ{hcoer} and \EQ{noflata} imply that 
\beq{ph}
H^{-1}(k):=\set{p: H(p)=k}=\partial \set{p: H(p)<k},
\eeq
for every $k>0,$ and that $H^{-1}(k)$ is nonempty,  compact, and has empty interior. For $k \geq 0,$ we define the \emph{cone function}
\begin{equation*}
C_k (x) := \max \left\{ p\cdot x : H(p)=k \right\}\!.
\end{equation*}
It is evident that  $C_k$ is convex, positively homogeneous, subadditive,   Lipschitz continuous uniformly for bounded $k,$ and $C_k(x)>0$ for every $k>0$ and $x\not=0.$  

Furthermore, it is easy to show that the map $(x,k)\mapsto C_k(x)$ is continuous as well as strictly increasing in $k,$ by \EQ{noflata}. Setting
\beqs
M_k:=\min_{|x|=1}C_k(x),
\eeqs 
we observe that 
\beqs
C_k(x)\ge M_k|x|\quad \mbox{and} \quad M_k\ge r_k\quad \mbox{provided} \ B(0,r_k)\subseteq\set{H\le k}. 
\eeqs
Clearly the largest such $r_k\ra\infty$ as $k\ra \infty,$ and thus
\beq{ccoer}
C_k (x) \ge M_k|x|\is{where} M_k\ra\infty \is{as} k\ra\infty.
\eeq

Suppose now that $C_k$ is differentiable at $x$ and let $p\in H^{-1}(k)$ be such that $C_k(x)=p\cdot x.$ Then 
\beqs
C_k(x+y)\ge p\cdot(x+y)=C_k(x)+p\cdot y.
\eeqs
 It follows that $DC_k(x)=p,$ and then $H(DC_k(x))=H(p)=k.$  (This all is a special case of  standard remarks regarding the minimum of a family of $C^1$ functions at points of differentiability.) Hence 
\begin{equation}\label{hdck}
H\left( D C_k(x) \right) = k \quad \mbox{for almost every} \ x\in \R^n.
\end{equation}

\begin{remark}\label{gencone}  The cones above are built from $H,$ but we won't record their dependence on $H$ in our notation. Note that the cones $\hat C_k$ built from $\hat H,$ where $\hat H(p):=H(-p),$ are given by $\hat C_k(x)=C_k(-x),$ which in light of Remark \ref{remcvxcav} accounts for some sign changes in the statements below. 
\end{remark}

The property of comparison with cones, defined below, will play an intermediate role in the next section, as we will pass from ``absolutely (sub/super)minimizing" to ``comparisons with cones" to ``convexity criteria." 

\begin{definition}\label{dcc}
We say that a function $u\in \USC(U)$ (i.e., $u:U\ra\R$ is upper semicontinuous) satisfies \emph{comparisons with cones from above} in $U,$ if  
\begin{equation*}
\max_{x \in \bar V} \left( u(x) - C_k(x-x_0) \right) = \max_{x \in \bv } \left( u(x) - C_k(x-x_0) \right)
\end{equation*}
whenever
\beq{kvx}
k\geq 0,\ V\ll U \is{and} x_0 \in \R^n\setminus V.
\eeq
 Similarly, we say that $v\in \LSC(U)$ satisfies \emph{comparisons with cones from below} in $U,$ provided that
\begin{equation*}
\min_{x \in \bar V} \left( v(x) + C_k(x_0-x) \right) = \min_{x \in \bv  } \left( v(x) + C_k(x_0-x) \right),
\end{equation*}
whenever \EQ{kvx} holds.  We say that $u\in C(U)$ satisfies \emph{comparisons with  cones} in $U$ if it satisfies comparisons with cones both from above and below.  
\end{definition}

We now recall the standard argument which demonstrates that functions satisfying comparisons with cones from above are necessarily locally Lipschitz continuous. In the statement, we use the constants $M_k$ from \EQ{ccoer}, and also the Lipschitz constant of $C_k,$ that is, the least constant $K_k$ for which
\beq{ckl}
C_k(z)\le K_k|z|\iq{for} z\in\R^n.
\eeq

\begin{lem} \label{ccca-lip}
 If $u:U\ra \R$ is upper semicontinuous and \ccas\ in $U,$ then $u\in C(U).$ Moreover,  if $R>0,$ $B(x,R)\cup B(y,R)\ll U,$ $|x-y|\le R$ and $k$ is so large that
 \beq{lipa}
\frac{\osc_{B(x,R)\cup B(y,R)} u}{R}\le M_k,
 \eeq
 then 
 \beq{lipb}
 |u(x)-u(y)|\le K_k|x-y|.   
 \eeq
\end{lem}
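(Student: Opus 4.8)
The plan is to use comparisons with cones from above twice, once centered at a point near $x$ and once near $y$, to trap the value $u(x)$ between a cone touching at $y$ and nearby boundary data, and thereby extract a one-sided Lipschitz bound; symmetry in $x,y$ then upgrades it to the two-sided estimate \EQ{lipb}. First I would fix $x,y$ and $R$ as in the statement, set $W := B(x,R)\cup B(y,R)$, and choose $k$ so large that \EQ{lipa} holds. Consider the ball $V := B\big(y,|x-y|\big)$; since $|x-y|\le R$ and $B(y,R)\ll U$, we have $V\ll U$, and $x\in\bv$, while the center $y=x_0$ lies in $V$ — so I must instead take as center a point $x_0\notin V$. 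The standard trick is to center the cone at $x$ itself: take $x_0 := x$ and $V := B(x,R)\setminus\{x\}$, which satisfies $V\ll U$ and $x_0 = x\in\R^n\setminus V$. Then comparisons with cones from above gives
\begin{equation*}
\max_{z\in\bar V}\big(u(z) - C_k(z-x)\big) = \max_{z\in\partial V}\big(u(z)-C_k(z-x)\big),
\end{equation*}
where $\partial V = \partial B(x,R)\cup\{x\}$, and since $C_k(0)=0$ the point $z=x$ contributes exactly $u(x)$ to the left-hand maximum. Hence $u(x) = u(x) - C_k(x-x) \le \max_{z\in\partial B(x,R)}\big(u(z)-C_k(z-x)\big)$; but on $\partial B(x,R)$ we have $|z-x| = R$, so $C_k(z-x)\ge M_kR\ge \osc_W u \ge u(z)-u(x)$, forcing the right-hand maximum to be attained at some $z\in\{x\}$ after all — i.e. the inequality degenerates. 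This shows the maximum over $\bar V$ equals $u(x)$, which will be the key local fact: $u(z)\le u(x) + C_k(z-x)$ for every $z\in\bar B(x,R)$.

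Having established that bound for every $z\in\bar B(x,R)$ (in particular for $z=y$, which lies in $\bar B(x,R)$ because $|x-y|\le R$), we obtain $u(y) - u(x)\le C_k(y-x)\le K_k|y-x|$. Running the identical argument with the roles of $x$ and $y$ interchanged — using $B(y,R)\ll U$ and the same $k$, which still satisfies \EQ{lipa} since the oscillation is over the symmetric set $W$ — yields $u(x)-u(y)\le C_k(x-y)\le K_k|x-y|$. Combining the two gives $|u(x)-u(y)|\le K_k|x-y|$, which is \EQ{lipb}. Local Lipschitz continuity, hence $u\in C(U)$, follows: given any $V'\ll U$, cover $\bar{V'}$ by finitely many balls on which $R$ can be taken uniformly and $k$ chosen from the (finite) oscillation of $u$, and argue along chains of points at distance $\le R$; upper semicontinuity of $u$ on the compact set guarantees $\osc u<\infty$ there, so a valid $k$ exists.

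The main obstacle I anticipate is the bookkeeping around the degenerate cone comparison — one must phrase $V$ as a punctured ball (or, equivalently, apply the cone comparison on $\bar B(x,R)$ directly with center at a boundary point and let that point approach $x$) so that the center $x_0$ legitimately lies outside $V$ while the touching value $C_k(0)=0$ is still captured; the inequality $C_k(z-x)\ge M_k|z-x|$ from \EQ{ccoer} together with \EQ{lipa} is exactly what makes the boundary maximum collapse back to the value $u(x)$, so the two hypotheses must be combined carefully. Everything else — the passage from the one-sided bound $u(z)\le u(x)+C_k(z-x)$ to \EQ{lipb}, and the deduction of continuity — is routine, using only that $C_k$ is subadditive, positively homogeneous, and Lipschitz with constant $K_k$ as recorded in \EQ{ckl}.
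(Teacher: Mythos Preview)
Your argument for the Lipschitz estimate \EQ{lipb} is essentially the paper's: you center the cone at $x$ and use the punctured ball $B(x,R)\setminus\{x\}$, while the paper centers at $y$ with $B(y,R)\setminus\{y\}$, which is the same up to the swap $x\leftrightarrow y$ that you perform anyway.

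The gap is in your deduction of $u\in C(U)$. You write that ``upper semicontinuity of $u$ on the compact set guarantees $\osc u<\infty$ there,'' but this is false: upper semicontinuity of a real-valued function on a compact set yields $\sup u<\infty$, not $\inf u>-\infty$ (e.g.\ $u(0)=0$, $u(t)=-1/t$ on $(0,1]$). Without a lower bound, you cannot choose any $k$ satisfying \EQ{lipa}, so the Lipschitz estimate is unavailable, and your bootstrapping of continuity from \EQ{lipb} is circular. The paper in fact flags exactly this point, remarking that ``the left hand side of \EQ{lipa} is finite because $u$ is continuous,'' and proves continuity \emph{first} by a separate cone-comparison argument: it shows each superlevel set $E_l=\{u>l\}$ is open by assuming $y\in B(x,\eta)\subset E_l$ has $u(y)\le l$, planting a cone $l+C_k(\cdot-y)$ with $k$ large enough to dominate $u$ on $\partial B(x,2\delta)$, and deriving $u(x)\le l+C_k(x-y)<u(x)$. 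Only after lower semicontinuity is in hand does the oscillation become finite on compacta, and then your (and the paper's) Lipschitz argument goes through. You need to supply an independent lower-semicontinuity step of this sort before invoking \EQ{lipb}.
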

\begin{proof}
We will first show that $u$ is lower semicontinuous-- and therefore continuous-- in $U.$ We must show that the set $E_l := \{ x\in U: u(x) > l \}$ is open for every $l\in \R.$ Fix $x\in E_l$ and put $\delta :=  \dist(x,\partial U)/3$ and $m:= \max_{\bar B(x,2\delta)} u.$ Select $k>0$ so large that
\begin{equation} \label{ccalipA}
m-l \leq \min_{|z|\geq \delta} C_k(z),
\end{equation}
and then select $0< \eta < \delta$ so  small that
\begin{equation} \label{ccalipB}
\max_{|z|\leq \eta} C_k(z) < u(x) - l.
\end{equation}
We claim that $B(x,\eta) \subseteq E_l.$ Suppose on the contrary that $y\in B(x,\eta)$ is such that $u(y) \leq l.$ Then using \EQ{ccalipA} we have
\begin{equation*}
u(z) \leq l + C_k(z-y) \quad \mbox{for} \ z\in \partial B(x,2\delta) \cup \{ y \} =\partial (B(x,2\delta)\setminus\set{y}).
\end{equation*}
Since $u$ \ccas, we deduce
\begin{equation*}
u(z) \leq l + C_k(z-y) \quad \mbox{for } \ z\in B(x,2\delta).
\end{equation*}
Inserting $z=x,$ we obtain a contradiction to \EQ{ccalipB}. Our claim that $E_{l}$ is open is confirmed. It follows that $u\in \LSC(U)$ and hence $u \in C(U).$

 Suppose now that the assumptions on $x, y, R, k$ are satisfied; in this regard, note that the left hand side of \EQ{lipa} is finite because $u$ is continuous.   If $|z-y|=R,$  then we have 
\beq{bige}
C_k(z-y)\ge M_k |z-y|\ge M_kR\ge \osc_{B(y,R)}u\ge u(z)-u(y). 
\eeq
Since $u$ satisfies comparisons with cones from above and \EQ{bige} also holds at $z=y,$ this inequality persists for all $|z-y|\le R;$ in particular,  it holds at $z=x$. Thus 
\beqs
u(x)-u(y)\le C_k(x-y)\le K_k|x-y|.
\eeqs  
Our assumptions are symmetric in $x$ and $y,$ and so we have established \EQ{lipb}.
\end{proof}
\begin{remark}\label{trueloc} To use the formulation of Lemma \ref{ccca-lip} to show that $u$ is Lipschitz continuous in some neighborhood of a given $x_0\in U,$ do as follows.  Suppose $R>0$ and $B(x_0,2R)\ll U.$ If $x,y\in B(x_0,R/2),$ then $B(x,R), B(y,R)\subset B(x_0,3R/2)\ll U$ and $|x-y|\le R.$ 
\end{remark} 
\begin{remark} \label{globalc} Suppose that $U=\R^n,$ and $u\in C(\R^n)$ is bounded and \ccas.  Then we may let $R\ra\infty,$ and satisfy \EQ{lipa} with $k\ra 0.$ This results in the information $u(x)-u(y)\le C_0(x-y)$ for $x, y\in\R^n.$   If $H^{-1}(0)=\set{0},$ then $C_0\equiv 0,$ and $u$ is constant.  Otherwise, we have $u(x)-u(y)\le \max_{H(p)=0}p\cdot(x-y).$ If $H^{-1}(0)\setminus \{ 0 \}$ is nonempty, then one can construct bounded, nonconstant functions which \cca \ in $\R^n.$
\end{remark}
\begin{remark}\label{onur} Suppose $u$ \ccas\ in $U$ and $\osc_Uu<\infty.$   If $R>0,$ $x,y\in U_{R},$ and $|x-y|\le R,$ then, applying the lemma,  \EQ{lipb} holds provided that $\osc_Uu/R\le M_k.$  On the other hand, if  $|x-y|\ge R, $ we have
\beqs
u(x)-u(y)\le \osc_U u=\frac{\osc_U u}R R\le \frac{\osc_U u}R|x-y|\le M_k|x-y|. 
\eeqs
Also note that $M_k|z|\le C_k(z)\le K_k|z|$ implies $M_k\le K_k.$  Hence 
\beq{lipd}
|u(x)-u(y)|\le A_R|x-y|\is{for} x,y\in U_R.
\eeq
where $A_R:=K_k.$  We have switched notation here to reflect the domain $U_R$ on which the Lipschitz condition holds. 
\end{remark}

This subsection concludes with a result establishing a connection between estimates of the form $u(x)-u(y)\le C_k(x-y),$ which appeared above, and estimates on $H(Du).$

\begin{lem}\label{htoc} Let $u\in\lloc(U)$ and $k\ge 0.$   Then the conditions
\begin{equation}\label{esth}  H(Du)\le k\is{ a.e. in} U
\end{equation}
and
\begin{equation}\label{optconest}
u(x)-u(y)\le C_k(x-y)\iq{provided that} [x,y]\subseteq U
\end{equation}
 are equivalent.
 \end{lem}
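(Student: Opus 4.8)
The statement splits into the two implications $\EQ{esth}\Rightarrow\EQ{optconest}$ and $\EQ{optconest}\Rightarrow\EQ{esth}$, and both rest on one elementary observation from convex analysis. Since $H$ is convex and coercive by \EQ{hcoer}, the sublevel set $\{p:H(p)\le k\}$ is nonempty, compact and convex for every $k\ge0$, and its boundary is $H^{-1}(k)$ by \EQ{ph} (this is trivial when $k=0$, where $\{H\le k\}=H^{-1}(0)$). Consequently
\[
C_k(z)=\max_{H(p)=k}p\cdot z=\max_{H(p)\le k}p\cdot z,
\]
so that $C_k$ is exactly the support function of the set $\{H\le k\}$. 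In particular, $p\in\{H\le k\}$ holds if and only if $p\cdot z\le C_k(z)$ for every $z\in\rn$, because a nonempty closed convex set is the intersection of the closed half-spaces determined by its support function. I would record these facts at the outset and then invoke them in each direction.

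For $\EQ{esth}\Rightarrow\EQ{optconest}$, fix $x,y$ with $[x,y]\subseteq U$; since $[x,y]$ is compact, some $\delta$-neighborhood of it lies in $U$. Let $u_\ep:=u*\rho_\ep$ be the standard mollification, which is smooth on the $(\delta-\ep)$-neighborhood of $[x,y]$ for $\ep<\delta$, with $Du_\ep=(Du)*\rho_\ep$. By Jensen's inequality (using convexity of $H$, $\rho_\ep\ge0$ and $\int\rho_\ep=1$) together with \EQ{esth}, we get $H(Du_\ep)\le k$ pointwise on that neighborhood; hence $Du_\ep$ takes values in $\{H\le k\}$ there. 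Writing $\gamma(s):=y+s(x-y)$, the fundamental theorem of calculus gives
\[
u_\ep(x)-u_\ep(y)=\int_0^1 Du_\ep(\gamma(s))\cdot(x-y)\,ds\le\int_0^1 C_k(x-y)\,ds=C_k(x-y),
\]
the inequality coming from the fact that $C_k$ is the support function of $\{H\le k\}$. Letting $\ep\downarrow0$ and using $u_\ep\to u$ locally uniformly (as $u\in\lloc(U)$ is continuous) yields $\EQ{optconest}$.

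For $\EQ{optconest}\Rightarrow\EQ{esth}$, let $x_0\in U$ be a point of differentiability of $u$; by Rademacher's theorem almost every point of $U$ qualifies. Fix $w\in\rn$. For $t>0$ small enough that $[x_0,x_0+tw]\subseteq U$, $\EQ{optconest}$ gives $u(x_0+tw)-u(x_0)\le C_k(tw)=tC_k(w)$ by positive homogeneity of $C_k$; dividing by $t$ and letting $t\downarrow0$ gives $Du(x_0)\cdot w\le C_k(w)$. Since $w\in\rn$ was arbitrary, the support-function characterization recalled above forces $Du(x_0)\in\{H\le k\}$, i.e.\ $H(Du(x_0))\le k$. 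This completes the proof.

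No step is a genuine obstacle here; the only point requiring care is the passage from the pointwise a.e.\ bound on $Du$ to the integral bound along the one-dimensional segment $[x,y]$, which is a Lebesgue-null subset of $\rn$. The mollification in the first implication is precisely what circumvents this difficulty, and the convexity of $H$ is what makes the mollified bound $H(Du_\ep)\le k$ available.
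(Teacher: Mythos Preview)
Your proof is correct and follows essentially the same route as the paper: mollify and apply Jensen's inequality to obtain $H(Du_\ep)\le k$, integrate along the segment and pass to the limit for one direction; differentiate $\EQ{optconest}$ at a point of differentiability and use that $C_k$ is the support function of the closed convex set $\{H\le k\}$ for the other. The only cosmetic difference is that you isolate the support-function identity $C_k(z)=\max_{H(p)\le k}p\cdot z$ at the outset, whereas the paper records it inline during the converse step.
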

\begin{proof}  Let us assume \EQ{esth} and prove \EQ{optconest}. 
First mollify $u$ by defining
\begin{equation*}
u_\ep (z): = \int_{B(x,\ep)} \rho\left( \frac{z-y}{\ep} \right)  u(y) \, dy,
\end{equation*}
where $\rho \in C^\infty(\R^n)$ has support in $B(0,1),$ $\rho \geq
0,$ and $\int_{\R^n} \rho(y) \, dy = 1$. Then $u_\ep \in
C^\infty(U_\ep)$ for every $\ep > 0,$ and, by Jensen's inequality,
\begin{equation*}
H(Du_\ep(z)) \leq \int_{B(0,\ep)} \rho\left( \frac{z-y}{\ep}\right)
H(Du(y)) \, dy \leq k \quad \mbox{for each} \ z \in U_\ep.
\end{equation*}
It follows that
\begin{align*}
u_\ep(x) - u_\ep(y) & = \int_0^1 (x-y) \cdot Du_\ep(y + t (x-y)) \, dt \\
& \leq \int_0^1 \sup_{H(p) \leq k} (p \cdot (x-y)) \, dt = C_k(x-y),
\end{align*}
provided that $[x,y] \subseteq U_\ep$. Sending $\ep \downarrow 0,$ we
obtain \EQ{optconest}.

To prove the converse, we  assume \EQ{optconest} and that $u$ is differentiable at $y\in U.$  Set $x:=y+tz$ in \EQ{optconest}, divide by $t>0$ and let $t\downarrow 0$ to find that for all $z\in \R^n,$
\beqs
Du(y)\cdot z\le C_k(z)=\max_{p\in H^{-1}(k)} p\cdot z=\max_{p\in H^{-1}([0,k])} p\cdot z. 
\eeqs
As $H^{-1}([0,k])$ is closed and convex, this implies $Du(y)\in H^{-1}([0,k]).$ 
\end{proof}
\subsection{Additional Remarks}\label{rem2} The terminology \emph{absolutely subminimizing}, etc, is used here for the first time.  However, the notion is implicit in \cite[Proposition 4.4]{tour}. The cone functions of Section \ref{cb} are the same as the ``generalized cones" of \cite{GWY}, which, in turn, were the natural objects to consider when generalizing the cone comparison results of \cite{CEG}.  Definition \ref{dcc} differs in details from that used in \cite{GWY}, but they turn out to define the same notion.  Lemma \ref{htoc} is contained in Propositions 2.9 and 2.10 of Champion and De Pascali \cite{CD}, but this is not evident at a glance owing to layers of definitions, and the latter represent a long road to the simple result we need. The analogous point in \cite{GWY} is made with viscosity solution techniques.  On page 278 of \cite{C} one finds  a more portable tool.  It is shown that if $u\in\lloc(U)$ and $z:[0,1]\ra U$ is Lipschitz continuous and $N$ is a Lebesgue null set of $U$ containing the set of points at which $u$ is not differentiable, then there exists a function $g:[0,1]\ra \R^n$ such that 
\beqs
\frac{d\ }{dt} u(z(t))=g(t)\cdot \frac{dz}{dt}(t)
\eeqs
and 
\beqs
g(t)\in \bigcap_{r>0} \is{closed convex hull of} Du(B(z(t),r)\setminus N) 
\eeqs
for almost every $t\in[0,1]. $ By choosing $N$ to include the points where $H(Du)\le k$ fails, we see that \EQ{optconest} follows from \EQ{esth}.

\section{Absolutely Subminimizing to the Convexity Criterion via Cones}\label{amtctc}

The main results of this section establish that if $u \in C(\uc)$ is absolutely subminimizing, then $u$ satisfies the convexity criterion.  An intermediate role is played by the notion of comparisons with cones, and we first establish that absolute subminimizers have this property. Given Lemma \ref{htoc}, the proof parallels that of \cite{tour} in the case of norms.  

\begin{prop}\label{aml-cca}
If $u\in\lloc(U)$ is absolutely subminimizing in $U,$ then $u$
satisfies comparisons with cones from above in $U.$ \end{prop}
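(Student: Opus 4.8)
The plan is to argue by contradiction. Suppose $u$ is absolutely subminimizing in $U$ but fails comparisons with cones from above. Then there exist $k\ge 0$, $V\ll U$, and $x_0\in\R^n\setminus V$ such that
\[
\max_{x\in\bar V}\bigl(u(x)-C_k(x-x_0)\bigr)>\max_{x\in\partial V}\bigl(u(x)-C_k(x-x_0)\bigr).
\]
The idea is to build a competitor $v$ on a suitable subdomain that has $\|H(Dv)\|_{L^\infty}$ strictly less than $\|H(Du)\|_{L^\infty}$ on that subdomain, contradicting Definition~\ref{sam} (with $u\ge v$). First I would set $W:=\{x\in V: u(x)-C_k(x-x_0)>\max_{\partial V}(u(\cdot)-C_k(\cdot-x_0))\}$; this is open, nonempty, and compactly contained in $V\ll U$, and on $\partial W$ one has $u(x)=C_k(x-x_0)+\text{const}$, while $u<C_k(\cdot-x_0)+\text{const}$ throughout $W$ is false — rather $u>$ that cone inside $W$. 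The natural competitor is $v(x):=\min\{u(x),\,C_k(x-x_0)+c\}$ on $\bar W$, where $c$ is the boundary constant; then $v=u$ on $\partial W$, $v\le u$ on $W$, and $v\in\lloc(W)\cap C(\bar W)$ since $C_k$ is Lipschitz. Since the set where $v\ne u$ is exactly $W$ (minus where they are already equal), and there $v(x)=C_k(x-x_0)+c$, we get $H(Dv)=k$ a.e.\ on that set by \eqref{hdck}, and $H(Dv)\le\|H(Du)\|_{L^\infty(W)}$ elsewhere.

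The crux is to arrange that $k$ is \emph{strictly} smaller than $\esssup_W H(Du)$, so that replacing $u$ by the cone strictly lowers the sup. To do this I would first observe that, by Lemma~\ref{htoc}, if $\esssup_V H(Du)\le k$ then $u(x)-u(y)\le C_k(x-y)$ whenever $[x,y]\subseteq V$; combining this along a segment from a boundary point of $W$ to an interior point (after possibly shrinking/adjusting $V$ so segments stay inside) would force $u(x)-C_k(x-x_0)\le$ the boundary value, contradicting $x\in W$. Hence necessarily $\esssup_W H(Du)>k$. Now pick $k'$ with $k<k'<\esssup_W H(Du)$, redo the construction with $C_{k'}$ in place of $C_k$: the comparison-with-cones failure persists for $k'$ (one checks the boundary/interior inequality is an open condition in $k$, or simply that $C_{k'}\ge C_k$ makes the inequality even easier to violate — care is needed here), giving a nonempty $W'\ll U$ on which $v':=\min\{u,C_{k'}(\cdot-x_0)+c'\}$ satisfies $v'=u$ on $\partial W'$, $v'\le u$, and $\|H(Dv')\|_{L^\infty(W')}\le\max\{k',\,\esssup_{W'}H(Du)\}$. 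If we can further ensure $\esssup_{W'}H(Du)$ over the region where $v'\ne u$ is at most $k'$ — which holds because on $\{v'=u\}\cap W'$ we are in the sublevel region $u\le C_{k'}+c'$ — then $\|H(Dv')\|_{L^\infty(W')}\le k'<\esssup_{W'}H(Du)\le\|H(Du)\|_{L^\infty(W')}$, the desired contradiction.

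I expect the main obstacle to be the bookkeeping that makes $k<\esssup H(Du)$ on the right region genuinely usable: one must confirm that on the set where $v'$ differs from $u$, the gradient of $v'$ is controlled by $k'$ (clear, from \eqref{hdck}), \emph{and} on the complementary set within $W'$ the function $v'$ equals $u$ so $H(Dv')=H(Du)$ there — but that latter set might still carry $H(Du)$ close to the old large value, which would not help. The resolution is that on $\{v'=u\}\cap W'$ we have $u(x)\le C_{k'}(x-x_0)+c'$ while on a neighborhood... actually one needs a stronger localization: replace $W'$ by a slightly enlarged set where $u>C_{k'}(\cdot-x_0)+c'$ strictly, so that $\{v'\ne u\}$ is a neighborhood of the high-gradient region; getting the open/closed set theory exactly right, and ensuring all segments used in the Lemma~\ref{htoc} argument remain inside the relevant domain, is where I anticipate spending the most care. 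Everything else — Lipschitz regularity of $v'$, the constant-commuting and order properties, applying the definition of absolutely subminimizing — is routine given the lemmas already established.
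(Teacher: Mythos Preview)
Your overall architecture is right and matches the paper's: argue by contradiction, localize to the open set $W$ where $u$ strictly exceeds the (translated) cone, and use the cone itself as the competitor. But there is a genuine gap in the crucial step, and a layer of unnecessary complication on top of it.

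\textbf{The gap.} To derive the contradiction you must show $l:=\|H(Du)\|_{L^\infty(W)}>k$. Your plan is: assume $l\le k$, invoke Lemma~\ref{htoc} along a segment from some $y\in\partial W$ to $x\in W$, and deduce $u(x)\le C_k(x-x_0)+c$. But Lemma~\ref{htoc} gives $u(x)-u(y)\le C_k(x-y)$, hence
\[
u(x)\le C_k(y-x_0)+c+C_k(x-y),
\]
and subadditivity of $C_k$ only yields $C_k(x-x_0)\le C_k(x-y)+C_k(y-x_0)$, which is the \emph{wrong} direction. So the argument fails for a generic boundary point $y$. The paper's remedy is the key geometric idea you are missing: choose $y=x_1$ on the \emph{ray from the vertex} $x_0$ through $x^*$, where the cone is linear; then $C_k(x^*-x_0)-C_k(x_1-x_0)=C_k(x^*-x_1)$ exactly, and the inequality $C_k(x^*-x_1)<u(x^*)-u(x_1)\le C_l(x^*-x_1)$ forces $l>k$.

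\textbf{The unnecessary complication.} Once $l>k$, you are done: on $W$ your competitor $v=\min\{u,\,C_k(\cdot-x_0)+c\}$ is \emph{identically} the cone (since $W=\{u>C_k(\cdot-x_0)+c\}$), so $\|H(Dv)\|_{L^\infty(W)}=k<l$ by \eqref{hdck}, contradicting Definition~\ref{sam}. There is no ``region where $v=u$'' inside $W$ to worry about, and hence no need for the $k'$ perturbation. Incidentally, your parenthetical concern there was justified: replacing $C_k$ by $C_{k'}\ge C_k$ does \emph{not} obviously preserve the strict interior/boundary inequality, so that route would require its own repair.
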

\begin{proof}  We argue by contradiction.
Suppose that $u \in \lloc(U)$ is absolutely subminimizing, but that it
does not satisfy comparisons with cones from above in $U.$ Then there
exists $V \ll U$ and a cone function $\varphi(x): = C_k(x-x_0) $ with
$k\geq 0$ and $x_0 \in \R^n \setminus V$ for which the comparison with
cones property fails; altering $u$ by a constant, we may assume that
\begin{equation*}
u(x) \leq \varphi(x) \quad \mbox{for every} \ x\in \partial V,
\end{equation*}
but    $u(x^*) > \varphi(x^*)$ for some $x^*\in V.$   We may assume as
well that $V$ is connected, $u> \varphi$ in $V$ and $u = \varphi$ on
$\partial V,$ since otherwise we may replace $V$ with the component of
$V \cap \{ u > \varphi\}$ containing $x^*.$

Consider the ray $R := \{ x_0 + t(x^*-x_0): t \geq 0 \}.$ Let $x_1,x_2
\in \partial V $ be  such that the connected component of $R \cap V$
containing $x^*$ is the line segment $( x_1, x_2 ),$ and $x_1$ is
contained in the line segment $[x_0,x_2).$ It follows that $x_1,
x_2\in\bv,$ and then
\begin{equation} \label{amlcca}
\begin{aligned}
C_k(x^*-x_1) & = C_k(x^*-x_0) - C_k(x_1-x_0) \\
                & = \varphi(x^*) - \varphi(x_1) < u(x^*) - u(x_1).
\end{aligned}
\end{equation}
Define $l:= \| H(Du) \|_{L^\infty(V)}$. We claim that $l > k$. Once
this is verified, and recalling \EQ{hdck}, we will obtain a
contradiction to our assumption that $u$ is absolutely subminimizing,
completing the proof. It suffices to show that
\begin{equation} \label{amlcccca}
u(x^*) - u(x_1) \leq  C_l(x^*-x_1),
\end{equation}
which will violate \EQ{amlcca} unless $l > k$. Fix $y \in (x^* ,
x_1),$ and notice that by Lemma \ref{htoc} we have
\begin{equation*}
u(x^*) - u(y) \leq C_l(x-y).
\end{equation*}
By sending $y \to x_1,$ we obtain \EQ{amlcccca}.
\end{proof}

The next result then takes us from absolutely subminimizing to the convexity criterion. However, the proof is rather lengthy, involving a tower of preliminary results which follow the statement.  This result was proven by Juutinen and Saksman \cite{JS} in the case that $H \in C^2(\R^n \setminus \{ 0\}) \cap C^1(\R^n)$, $H$ is convex in $\R^n$ and locally uniformly convex in $\R^n \setminus \{ 0\}$, and $H$ grows superlinearly in $p$.

\begin{prop}
\label{cca-conv}
Assume that $u \in C(U)$ is bounded and satisfies comparisons with cones from above in $U,$ and $r>0.$ Then there is an $\eta>0$ such that for  $x\in U_r$ 
\begin{equation}\label{conva}
\mbox{the map} \quad t \mapsto \flow^tu(x) \quad \mbox{is convex on} \quad [0,\eta)
\end{equation}
where $\eta = \eta(\osc_U u, r) > 0$ depends only on $\osc_U u$ and $r.$ 
\end{prop}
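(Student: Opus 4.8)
The plan is to show that the convexity criterion follows from comparison with cones by exploiting the semigroup property \EQ{uepadd} and reducing convexity in $t$ to a pointwise, cone-based inequality.

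\textbf{Setup and strategy.} First I would fix $r>0$ and $x\in U_r$ and, using Lemma \ref{local}, choose $t_0 = t_0(\osc_U u, r/2)$ so that for $t < t_0$ the flow $\flow^t u(x)$ is computed by taking the supremum over $y \in \bar B(x, r/2)$, and the semigroup identity $\flow^{t+s}u = \flow^t(\flow^s u)$ holds on $U_r$ for $t+s < t_0$. The midpoint convexity inequality I want on a suitable interval $[0,\eta)$ is
\begin{equation*}
\flow^{t+s} u(x) + \flow^{t-s} u(x) \ge 2 \flow^t u(x) \qquad \text{for } 0 \le s \le t, \ t+s < \eta.
\end{equation*}
Because $t \mapsto \flow^t u(x)$ is continuous (Remark \ref{stcont}), midpoint convexity yields convexity; so it suffices to prove, writing $w := \flow^{t-s}u$, that $\flow^{2s}w(x) + w(x) \ge 2\flow^s w(x)$, i.e. it suffices to prove the ``one-step'' convexity inequality $\flow^{2s}w(x) - \flow^s w(x) \ge \flow^s w(x) - w(x)$ for all small $s$, where $w$ is itself a flow of $u$. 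The key point making this tractable is that $w = \flow^{t-s}u$, being a sup of cone-like functions minus $u$, inherits good structural properties — in particular, I expect to show $w$ still satisfies comparison with cones from above on a slightly smaller domain (or at least the relevant one-sided cone estimates), with $\osc w \le \osc_U u$.

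\textbf{The core cone estimate.} The heart of the argument is the following local claim: if $g \in C(U_r)$ satisfies comparisons with cones from above with $\osc g \le \alpha$, and $s < t_0(\alpha, r/2)$, then for $x \in U_{2r}$,
\begin{equation*}
\flow^s g(x) - g(x) \le \flow^{2s} g(x) - \flow^s g(x).
\end{equation*}
To prove this, pick $z \in \bar B(x, r/2)$ realizing $\flow^s g(x) = g(z) - sL((z-x)/s)$. By Lemma \ref{htoc} applied to the (locally Lipschitz, by Lemma \ref{ccca-lip}) function $g$ on a ball containing $[x,z]$, together with the comparison-with-cones hypothesis, one controls $g(z) - g(x)$ by $C_k(z-x)$ for appropriate $k$; conversely, using the definition of $\flow^{2s}$ at the point $2z - x$ (the reflection of $x$ through $z$, which lies in $\bar B(x,r) \subseteq U_r$ for suitably small $r$) and the one-homogeneity of $L$ to write $2sL((z-x)/s) = 2sL(((2z-x)-x)/(2s)) \cdot$ — more precisely $2s\,L\big(\tfrac{(2z-x)-x}{2s}\big) = 2s\, L\big(\tfrac{z-x}{s}\big)$ is false; rather I use that the straight-line path from $x$ through $z$ to $2z-x$ gives $\flow^{2s}g(x) \ge g(2z-x) - 2sL((z-x)/s)$. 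Then comparison with cones from above, used to bound $g(z) - g(2z-x)$ from above by $C_k(x - z)$ with the same $k$ controlling $g(z)-g(x) \le C_k(z-x)$, will produce exactly the desired inequality after rearranging. This is the ``Le Gruyer'' type estimate adapted to general cones, and it is where the convexity of $L$ and the definition of $C_k$ via $H^{-1}(k)$ interlock.

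\textbf{Assembling the proof.} Given the core estimate, I would iterate/bootstrap: apply it with $g = \flow^{(t-s)}u$ to get the midpoint inequality at $t$ with increment $s$, being careful to track domains — each application of the semigroup law shrinks the admissible domain by $r$, so one works on $U_{r}$ at the cost of requiring $x \in U_{3r}$ or so, which is harmless after relabeling $r$. The interval of validity $[0,\eta)$ emerges as $\eta = c\, t_0(\osc_U u, r)$ for a small universal constant, and $t_0$ depends only on $\osc_U u$ and $r$ via \EQ{pm}, giving the claimed dependence of $\eta$. The main obstacle I anticipate is the bookkeeping needed to show $\flow^s g$ again satisfies comparisons with cones (or the weaker one-sided estimates actually used) with controlled oscillation on the requisite subdomain, so that the core estimate can legitimately be re-applied along the orbit — the cone-preservation under the Hopf–Lax flow is intuitively clear but requires a genuine argument, and getting the domains and the constant $k$ to match up in the two uses of comparison-with-cones (the upper bound on $g(z)-g(x)$ and on $g(z)-g(2z-x)$) is the delicate point.
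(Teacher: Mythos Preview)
Your reduction is sound and in fact matches the paper's: use the semigroup identity, note that $w=\flow^{s}u$ again satisfies comparisons with cones from above on a slightly smaller domain with $\osc w\le\osc_Uu$, and then reduce convexity of $t\mapsto\flow^tu(x)$ to a ``one-step'' inequality for CCA functions. The paper phrases the one-step estimate as
\[
S^+w(x)\le\frac{\flow^\tau w(x)-w(x)}{\tau}\quad(\text{Lemma \ref{HJconvex}})
\]
and deduces convexity from the resulting monotonicity of secant slopes; your midpoint formulation is an equivalent target.

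The gap is in your ``core cone estimate.'' After picking $z$ realizing $\flow^s g(x)$ and using $\flow^{2s}g(x)\ge g(2z-x)-2sL((z-x)/s)$, what you must prove is
\[
g(2z-x)+g(x)\ge 2g(z),
\]
i.e.\ midpoint convexity of $g$ along the segment $[x,2z-x]$. Two applications of CCA only yield upper bounds $g(z)-g(x)\le C_k(z-x)$ and $g(z)-g(2z-x)\le C_k(x-z)$, which combine to
\[
2g(z)-g(x)-g(2z-x)\le C_k(z-x)+C_k(x-z),
\]
and the right-hand side is strictly positive for any $k>0$. So the reflection argument as written cannot close; CCA is a one-sided, subsolution-type property and does not furnish convexity of $g$ along an arbitrary ray, even the one determined by the Hopf--Lax maximizer.

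The paper's mechanism for the one-step estimate is genuinely different. One sets $k:=(\flow^t u(x)-u(x))/t$ and invokes a structural identity (Lemma \ref{coneflowl}):
\[
C_k(y-x)=kt+tL\!\left(\frac{y-x}{t}\right)\quad\text{for }y-x\in t\Gamma_k,
\]
where $\Gamma_k=\bigcup_{p\in H^{-1}(k)}\partial H(p)$. Since $t\Gamma_k$ contains the boundary of a neighborhood $tN_k$ of $0$ (Lemma \ref{neighbor}), the very definition of $k$ gives $u(y)-u(x)\le C_k(y-x)$ on $\partial(x+tN_k)$, CCA pushes this inside, and the reverse inequality $C_k(y-x)\le ks+sL((y-x)/s)$ then yields $\flow^s u(x)-u(x)\le ks$ for all small $s$. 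This cone--Lagrangian duality on the set $\Gamma_k$ is the missing ingredient; the reflection point $2z-x$ plays no role.
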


For each $u \in C(U)$ and $x\in U,$ we define the quantity
\begin{equation}  \label{Ssupfinite}
S^+ u(x) : = \limsup_{t \downarrow 0} \frac{\flow^t u(x) - u(x)}{t}.
\end{equation}  
It is involved in the formulation of the next lemma, and  is of further use later. Proposition \ref{cca-conv} is a direct consequence of Lemma \ref{HJconvex} below, as we demonstrate immediately below.  Then we will continue with a sequence of lemmata and culminate with the proof of Lemma \ref{HJconvex}.  

\begin{lem} \label{HJconvex}
Assume that $u \in C(U)$ is bounded and \ccas\ in $U.$  Then for each $r>0$  there exists $\eta > 0,$ depending only on $\osc_U u,$ $r,$ and the function $M$ of \EQ{pm}, such that
\begin{equation}\label{sple}
S^+u(x) \leq \frac{\flow^t u(x) - u(x)}{t} \quad \mbox{for all} \ x \in U_{r}, \ 0 < t < \eta.
\end{equation}
\end{lem}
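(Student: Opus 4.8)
The statement to prove is Lemma~\ref{HJconvex}: for $u\in C(U)$ bounded and satisfying comparisons with cones from above, and for each $r>0$, there is $\eta>0$ (depending only on $\osc_U u$, $r$, and $M$ from \EQ{pm}) so that $S^+u(x)\le (\flow^t u(x)-u(x))/t$ for all $x\in U_r$ and $0<t<\eta$. The key point is that this inequality is exactly the ``slopes of chords from $0$ are nondecreasing'' half of convexity of $t\mapsto \flow^t u(x)$ at the left endpoint; we will not need full convexity here, only the comparison of the difference quotient at scale $t$ with the $\limsup$ at scale $0$.

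The plan is to exploit the semigroup identity \EQ{uepadd} together with the sub-solution-type information coming from comparison with cones. First, fix $r>0$ and choose $\eta$ small enough that Lemma~\ref{local} applies (so $\flow^t$ is localized to balls of radius, say, $r/2$, and $\flow^{t+s}u=\flow^t(\flow^s u)$ for $t+s<\eta$, on $U_r$); shrinking $\eta$ further using \EQ{pm} as in the proof of Lemma~\ref{local}. By Remark~\ref{onur} and Lemma~\ref{ccca-lip}, $u$ is Lipschitz on $U_{r/2}$ with a constant $A$ depending only on $\osc_U u$, $r$, and the cone data (hence ultimately on $M$), so by Remark~\ref{stlip} we get the quantitative bound $\flow^s u(x)-u(x)\le (aA)s$ for $x\in U_{r/2}$ and small $s$, where $a=a(A,M)$; in particular $S^+u(x)\le aA<\infty$ on $U_r$, so the statement is at least meaningful. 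Next, the heart of the matter: I claim that for $x\in U_r$ the function $s\mapsto \flow^s u(x)$ has a nondecreasing right-derivative, or more precisely that $s\mapsto (\flow^{s+h}u(x)-\flow^s u(x))/h$ is monotone in the relevant range; this is what forces $S^+u(x)$, which is the $\limsup$ of difference quotients at $s=0$, to be dominated by any difference quotient $(\flow^t u(x)-u(x))/t$ at a positive scale. To get this monotonicity I would show that $v:=\flow^s u$ again satisfies comparisons with cones from above on $U_{r'}$ for a slightly smaller domain — because $\flow^s$ preserves the cone-comparison property (a cone $C_k$ is a fixed point, up to the obstacle, of the flow, and sup-convolutions of cone-subsolutions are cone-subsolutions by the same segment argument as in Proposition~\ref{aml-cca}) — and then iterate the estimate. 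The cleanest route is: using \EQ{uepadd}, write $\flow^{t}u(x)-u(x)=\sum$ of increments $\flow^{(j+1)t/N}u(x)-\flow^{jt/N}u(x)$ and show each increment is $\ge$ the previous one; summing and dividing by $N$ then letting $N\to\infty$ gives $S^+u(x)\le(\flow^tu(x)-u(x))/t$.

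The key estimate that each increment dominates the previous one I would prove by a cone-comparison argument at the point realizing the sup: pick $z$ with $\flow^s u(x)=u(z)-sL((z-x)/s)$, note $z\in \bar B(x,r/2)$, and compare $u$ on $\bar B(z,\rho)$ with the cone $C_k(\cdot-z)+u(z)$ for suitable $k$ (chosen via the Lipschitz bound so the cone dominates $u$ on the sphere). The comparison-with-cones inequality then transfers to a lower bound on $\flow^{s'}u$ near $z$ for $s'>s$, and combined with the convexity of $L$ (used exactly as in the proof of \EQ{uepadd} in Lemma~\ref{local}) yields the monotonicity of increments. I expect \textbf{this transfer step — turning the pointwise cone-comparison inequality into the monotonicity of the flow increments with constants depending only on $\osc_U u$, $r$, $M$ — to be the main obstacle}, since one must track carefully that all the radii and the cone parameter $k$ stay controlled by those quantities alone (not on $u$ itself beyond its oscillation), and that the localization window $\eta$ does not shrink to $0$ through the iteration. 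Once the incremental monotonicity is in hand, the telescoping/averaging argument and the passage $N\to\infty$ are routine, and Proposition~\ref{cca-conv} follows since \EQ{sple} together with the already-established semigroup property \EQ{uepadd} upgrades to convexity of $t\mapsto\flow^t u(x)$ on $[0,\eta)$ in the standard way.
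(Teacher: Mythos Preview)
Your outline has a real gap at precisely the point you flag as the ``main obstacle.'' The transfer step, as you sketch it, places a cone $C_k(\cdot - z) + u(z)$ above $u$ on a ball about the maximizer $z$, with $k$ chosen from the Lipschitz bound so that the cone dominates on the sphere. But that choice of $k$ only recovers a coarse upper bound of the form $\flow^h u(z) - u(z) \le kh$ with $k$ controlled by the Lipschitz constant; it does not produce the \emph{sharp} inequality you need, namely that the increment at scale $h$ is dominated by the increment at the previous scale. Nothing in your sketch links the level of the cone to the quantity $(\flow^t u(x)-u(x))/t$, and without that linkage the telescoping argument cannot close: you would get $S^+u(x)\le aA$ (which you already knew) rather than $S^+u(x)\le (\flow^t u(x)-u(x))/t$. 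The iteration you propose also has a circularity worry: showing that increments are monotone is essentially the convexity statement of Proposition~\ref{cca-conv}, which in the paper is \emph{deduced from} Lemma~\ref{HJconvex}, not the other way around.

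The paper's proof avoids this by a direct, one-shot argument with a very specific choice of cone level. Given $x$ and $t$, it sets $k := (\flow^t u(x) - u(x))/t$. The key structural inputs are Lemma~\ref{coneflowl}, which gives the exact identity $C_k(y-x) = kt + tL((y-x)/t)$ on the set $x + t\Gamma_k$, and Lemma~\ref{neighbor}, which identifies a neighborhood $N_k$ of the origin with $\partial N_k \subseteq \Gamma_k$. Since $\flow^t u(x) = u(x) + kt$ says $u(y) - u(x) \le kt + tL((y-x)/t)$ for all $y$, one gets $u(y) - u(x) \le C_k(y-x)$ on the boundary $x + t\partial N_k$; comparison with cones then propagates this inequality to the interior $x + tN_k$. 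Applying the inequality $C_k(y-x) \le ks + sL((y-x)/s)$ from Lemma~\ref{coneflowl} once more, now for small $s$, yields $\flow^s u(x) - u(x) \le ks$, hence $S^+u(x) \le k$. The crucial idea you are missing is this two-way bridge between the cone $C_k$ and the Lagrangian $L$ at the \emph{particular} level $k$ dictated by $t$; your Lipschitz-based choice of $k$ cannot see it.
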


\begin{proof}[{\bf Proof of Proposition \ref{cca-conv}}] Let $x\in U$ and $0<s<t<t_0(\osc_Uu,\dist(x,\bu))$  where $t_0$ is from Lemma \ref{local}.
The idea of the proof is to use \EQ{sple} with $T^su$ in place of $u$ and $t-s$ in place of $t$ to conclude that  
\begin{equation}
\label{ARGHHH}
\begin{split}
\limsup_{h \downarrow 0} \frac{\flow^h(\flow^su )(x) - \flow^su (x)}{h} &\leq \frac{\flow^{t-s}(\flow^su )(x) - \flow^s u(x)}{t-s}\\&=\frac{\flow^tu(x)-\flow^s(x)}{t-s}. 
\end{split}
\end{equation}
Expressed in terms of the function $\varphi(t) := \flow^tu(x),$ the inequality \EQ{ARGHHH} states that
\begin{equation}
\label{arggghhh}
\limsup_{h \downarrow 0} \frac{\varphi(s+h) - \varphi(s)}{h} \leq \frac{\varphi(t) -\varphi(s)}{t-s} \quad \mbox{for all} \ 0 < s < t.
\end{equation}
The convexity of $\varphi$ is guaranteed by \EQ{arggghhh} if $\varphi$ is Lipschitz. Indeed, the Lipschitz continuity of $\varphi$ and \EQ{arggghhh} imply that 
\beqs
\frac{d\ }{ds}\left[s\mapsto\frac {\varphi(t)-\varphi(s)}{t-s}\right]=\frac1{t-s}\lp -\varphi'(s)+\frac {\varphi(t)-\varphi(s)}{t-s}\rp\ge 0
\eeqs
almost everywhere. However, $\varphi$ is convex on an interval $[0,\delta)$ precisely when the map $s\mapsto (\varphi(t)-\varphi(s))/(t-s)$ is nondecreasing on $(0,t)$ for each $t < \delta;$ that is, the slope of a secant line is nondecreasing as a function of the left endpoint. 

It remains to do enough bookkeeping to justify the formal calculations above.  
Suppose that $x\in U_{2r}$ and $0<s<t_0(\osc_Uu,r).$    
First notice that $\flow^s u$ satisfies comparisons with cones from above in $U_{2r}.$    Indeed, for such $s$ and $x\in U_{2r}$ we may write
\begin{equation*}
\flow^s u(x) = \sup_{z \in B(0,r)} \left( u(z+x) - sL\left( \frac{z}{s} \right) \right).
\end{equation*} 
For  $z\in B(0,r),$ the map $x\mapsto u(z+x)$ has $A_r$ as a Lipschitz constant on $U_{2r},$ where $A_r$ is from Remark \ref{onur}; thus so does $\flow^su.$ Similarly, $\flow ^s u$ \ccas\ on $U_{2r},$ being the Lipschitz continuous supremum of functions with this property.  Finally, $\osc_U\flow^s u\le \osc_U u$ by \EQ{flowup}.    Thus  if $x\in U_{3r}$ we may apply Lemma \ref{HJconvex} with $U_{2r}$ in place of $U$ and $\flow^s u$ in place of $u$ to obtain \EQ{ARGHHH} if $t<\min(t_0(\osc_Uu,r), \eta).$ Here we are implicitly using Remark \ref{indu}. 

It remains to show that $\varphi(t)$ is Lipschitz.  If $0<s<t<t_0(\osc_Uu,r),$ and $x\in U_{r},$ then 
\beqs
\flow^s u(x)\le \flow^{t-s}\flow^su(x)=\flow^t u(x).
\eeqs
Thus $\varphi(t)$ is nondecreasing.  In the other direction, we use Remark \ref{stlip} to conclude that if $x\in U_{3r},$ then
\beqs
\flow^tu(x)-\flow^su(x)=\flow^{t-s}\flow^su(x)-\flow^s u(x)\le aK(t-s)
\eeqs
where $K=A_r$ is the Lipschitz constant for $\flow^su$ noted above and $a$ is from the remark.  This establishes the Lipschitz continuity of $\varphi$, and we have proved \EQ{conva} for $x\in U_{3r}.$ However, $r>0$ is at our disposal, and we are done. 
 \end{proof}

To prove Lemma \ref{HJconvex}, we require information about the sets 
\begin{equation*}
\Gamma_k := \{ q \in \R^n : q \in \partial H(p) \ \mbox{for some}  \ p \in H^{-1}(k) \},
\end{equation*}
and
\begin{equation*}
W_k := \{ q \in \R^n : q \in \partial H(p) \ \mbox{for some} \ p \in H^{-1}([0,k]) \},
\end{equation*}
where $k\geq 0.$ Observe that both $\Gamma_k$ and $W_k$ are closed and bounded.

\begin{lem}\label{neighbor}
For each $k > 0,$ the set $N_k := W_k \setminus \Gamma_k$ is a bounded neighborhood of the origin, and $\partial N_k \subseteq \Gamma_k.$
\end{lem}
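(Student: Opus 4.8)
The plan is to analyze the convex function $H$ via its subdifferential map and the conjugate relationship $q \in \partial H(p) \iff p \in \partial L(q) \iff p\cdot q = H(p) + L(q)$. Since $H$ satisfies \EQ{ch}, the set $\{H \le k\}$ is a compact convex body with nonempty interior (it contains $B(0,R_0)$ for $k \ge k_0$, and more generally a ball around $0$ for every $k>0$, since $H(0)=0$ and $H$ is continuous). First I would show $N_k \supseteq B(0,\rho)$ for some $\rho>0$. Take any $q$ with $|q|$ small; the supremum defining $H(0) = \sup_q(0\cdot q - L(q))$ together with superlinearity \EQ{pm} of $L$ shows $L$ attains its value in the formula $H(p) = \sup_q(p\cdot q - L(q))$ for small $p$ at a bounded set of $q$'s, and more to the point, for $p$ in the interior of $\{H \le k\}$ close enough to $0$, the subgradients $\partial H(p)$ are small (since $H$ is locally Lipschitz with small local Lipschitz constant near $0$, as $H(0)=\min H = 0$ forces $0 \in \partial H(0)$, hence $\partial H(p) \to \{0\}$-ish as $p\to 0$ in the sense that $\sup_{p\in B(0,\epsilon)}\sup_{q\in\partial H(p)}|q| \to 0$; this uses $0\in\partial H(0)$ and upper semicontinuity/local boundedness of $\partial H$). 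For small enough such $p \in \intr\{H<k\}$ we get $\partial H(p) \subseteq$ small ball, and these $p$ lie in $H^{-1}([0,k))$, hence their subgradients lie in $W_k$; and I claim they avoid $\Gamma_k$. The point is that $q \in \Gamma_k$ means $q \in \partial H(p')$ for some $p'$ with $H(p')=k$; by the conjugacy relation $p' \in \partial L(q)$ and $H(p') = p'\cdot q - L(q) = k$, so $C_k$-type considerations (or directly: the function $p \mapsto q\cdot p - L(q)$ is maximized over $\{H \le k\}$ at $p'$ with value $k$) force $q\cdot p'$ large; meanwhile for our small $q$ and the whole body $\{H\le k\}$ bounded, $\sup_{p\in\{H\le k\}} q\cdot p$ is small, contradicting its equalling $k + L(q) \ge k > 0$ once $|q|$ is small enough. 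Thus small $q$ lie in $N_k$.

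Next I would establish $\partial N_k \subseteq \Gamma_k$, equivalently: $N_k$ is open relative to... no — I should show $W_k$ and the complement behave well. Since $W_k$ is closed (stated in the excerpt), $N_k = W_k \setminus \Gamma_k$, and $\Gamma_k$ is closed, we have $\partial N_k \subseteq \partial W_k \cup \Gamma_k$ in general; the real content is that $\partial W_k \subseteq \Gamma_k$. Here is the idea: $W_k = \bigcup_{0 \le j \le k}\Gamma_j$ is the image of $H^{-1}([0,k])$ under $\partial H$. Using the conjugacy, $W_k = \{q : \text{the sup defining } H \text{ at some } p\in H^{-1}([0,k]) \text{ is attained}\} = \partial L(H^{-1}([0,k]))$... more usefully, $q \in W_k$ iff $p \in \partial L(q)$ for some $p$ with $H(p) \le k$, iff $\min_{p \in \partial L(q)} H(p) \le k$ — but $\partial L(q)$ is a convex set and $H$ convex, so actually $q\in W_k$ iff there's $p$ with $p\cdot q - L(q) = H(p) \le k$, i.e. iff $L^{**}$-type... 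Let me instead use: $q \in W_k$ iff $L(q) + \min\{H(p) : p\cdot q - L(q) = H(p)\} \le k$. Cleaner: consider the function $h_k(q) := \inf\{H(p) : q \in \partial H(p)\}$ when $\partial L(q)\neq\emptyset$, i.e. $h_k(q) = \inf\{H(p): p \in \partial L(q)\}$. Then $W_k = \{q : h(q) \le k\}$ where $h(q) = \inf\{H(p) : p\in\partial L(q)\}$. On $W_k$, a boundary point $q_0$ has $h(q_0) = k$ (by continuity considerations and since $h < k$ would be an interior condition using upper semicontinuity of $\partial L$ and continuity of $H$ — an approximate subgradient argument), and then the infimum defining $h(q_0)$, attained at some $p_0 \in \partial L(q_0)$ with $H(p_0)=k$, shows $q_0 \in \partial H(p_0)$, $p_0\in H^{-1}(k)$, so $q_0 \in \Gamma_k$.

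The step I expect to be the main obstacle is the claim that $h(q_0) = k$ for $q_0 \in \partial W_k$ — i.e. that the sublevel set $\{h \le k\}$ doesn't have "thickness at level $< k$" at its boundary. This requires knowing that if $h(q_0) < k$ then $q_0 \in \intr W_k$, which amounts to: the set of $q$ admitting a subgradient realization $p$ with $H(p) < k$ is open. This should follow because $\partial H$ is a maximal monotone (hence, in finite dimensions, closed-graph with convex values) map, and $H^{-1}([0,k))$ is open with $\partial H$ restricted there having the property that $\partial H(H^{-1}([0,k)))$ is open — but this needs the empty-interior-of-level-sets hypothesis \EQ{noflat}? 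Actually no: openness of $\partial H(\text{open set } O)$ for $O$ open can fail, but here $O = \intr\{H < k\}$ and the key is that $\partial H$ is surjective onto $\intr(\dom L) \supseteq$ everything relevant (by coercivity \EQ{hcoer} of $H$, $\partial H$ is onto $\R^n$), and $\partial H^{-1} = \partial L$; then $\partial H(O) = (\partial L)^{-1}(O) = \{q : \partial L(q) \cap O \neq \emptyset\}$, and since $\partial L(q)$ depends "upper semicontinuously" on $q$ and $O$ is open, whether $\partial L(q)$ meets $O$ is... not obviously an open condition (USC gives the closed condition). I'd resolve this by instead using that $\partial L(q) \cap O \neq \emptyset$ with $O$ open convex and $\partial L(q)$ a nonempty compact convex set: pick $p \in \partial L(q) \cap O$; by monotonicity/conjugacy, $q \in \partial H(p)$, and since $H$ is finite and convex near $p$ with $p$ in the open set $O$, $\partial H$ is locally bounded near $p$ and its graph is closed, from which a short argument gives that a full neighborhood of $q$ also has subgradients landing in $O$ — this is the lemma's crux and I'd spend most of the writeup here, likely invoking the explicit structure that $C_k$ is differentiable a.e. with $H(DC_k) = k$ (equation \EQ{hdck}) to exhibit, for $q$ near $q_0$, an honest point $p(q) \in H^{-1}([0,k])$ with $q \in \partial H(p(q))$.
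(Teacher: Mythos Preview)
Your plan is structurally sound: you correctly reduce everything to showing $N_k \subseteq \intr W_k$, since then $N_k = \intr(W_k) \cap \Gamma_k^c$ is open and $\partial N_k = \overline{N_k}\setminus N_k \subseteq W_k \setminus N_k = \Gamma_k$. Your decomposition $\partial N_k \subseteq \Gamma_k \cup \partial W_k$ and the target $\partial W_k \subseteq \Gamma_k$ amount to the same thing.

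The genuine gap is at the step you yourself flag as ``the lemma's crux'': showing that $\{q : \partial L(q) \cap \{H<k\} \neq \emptyset\}$ is open. None of the routes you sketch (upper semicontinuity of $\partial L$, maximal monotonicity, or the a.e.\ differentiability of $C_k$ from \EQ{hdck}) is carried to a conclusion, and the last one in particular does not obviously help. The paper bypasses all of this with a direct affine perturbation that you are circling but never land on. Take $q_0\in N_k$ and $p_0$ with $q_0\in\partial H(p_0)$, $H(p_0)<k$; the support $A_0(p):=H(p_0)+q_0\cdot(p-p_0)$ satisfies $A_0\le H$ everywhere, and $A_0(p)=H(p)$ forces $q_0\in\partial H(p)$. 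Since $q_0\notin\Gamma_k$, we have $A_0<H$ on the compact set $H^{-1}(k)$, hence $H-A_0\ge\delta>0$ there. For $q$ with $|q-q_0|<\delta/(2R)$, $R:=\sup_{H(p)=k}|p-p_0|$, the perturbed affine $A_1(p):=H(p_0)+q\cdot(p-p_0)$ still satisfies $H-A_1\ge\delta/2>0$ on $H^{-1}(k)$, while $(H-A_1)(p_0)=0$. The minimum of the convex function $H-A_1$ over the convex body $H^{-1}([0,k])$ is therefore attained at some $p_1$ in the interior $\{H<k\}$, which forces $q\in\partial H(p_1)$ and hence $q\in W_k$. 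That is the whole proof of $N_k\subseteq\intr W_k$.

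Your treatment of ``$0\in N_k$'' is also more complicated than needed. No limiting behavior of $\partial H$ near the origin is required: $0\in\partial H(0)$ because $H(0)=\min H$, so $0\in W_k$; and $0\in\Gamma_k$ would mean $0\in\partial H(p')$ for some $p'$ with $H(p')=k$, i.e.\ $p'$ is a global minimizer with value $k>0$, contradicting $H(0)=0$. Thus $0\in N_k$, and openness of $N_k$ (from the previous paragraph) gives ``neighborhood of the origin''; boundedness is inherited from $W_k$.
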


\begin{proof}
Suppose $q_0 \in N_k$ and choose $p_0 \in H^{-1}[0,k)$ such that $q_0 \in \partial H(p_0).$ The affine function
\begin{equation*}
A_0(p) := H(p_0) + q_0 \cdot (p - p_0)
\end{equation*}
satisfies $A_0 \leq H,$ and moreover $q_0 \in \partial H(p)$ for every $p$ such that $A_0(p) = H(p).$ Since $q_0 \not\in \Gamma_k,$ there exists $\delta > 0$ such that $A_0(p) \leq H(p) - \delta$ for every $p\in H^{-1}(k).$

Let $R = \sup_{ p \in H^{-1}(k) } |p - p_0|$ and suppose $q \in B(q_0,\delta / 2R).$ Define
\begin{equation*}
A_1(p) := H(p_0) + q \cdot (p - p_0),
\end{equation*}
and observe that for any $p \in H^{-1}(k)$ we have
\begin{equation*}
H(p) - A_1(p) = H(p) - A_0(p) + (q - q_0) \cdot (p - p_0) \geq \delta / 2 > 0.
\end{equation*}
Thus there is a $p_1 \in H^{-1}[0,k)$ such that $H(p_1) - A_1(p_1) = \min_{ p\in H^{-1}[0,k] } (H - A_1)(p).$ Since $q \in \partial H(p_1),$ it follows that $q \in W_k.$ Therefore $B(q_0,\delta/2R) \subseteq W_k.$

We have shown that $N_k \subseteq \intr(W_k).$ Since $\Gamma_k$ is closed, it follows that $N_k$ is open. Since $W_k$ is closed, we deduce that $\partial N_k \subseteq \Gamma_k.$
\end{proof}

The following lemma connects the convexity criterion to comparisons with cones, and together with Lemma \ref{neighbor} plays a role similar to that of Proposition 2.5 in \cite{JS}.

\begin{lem} \label{coneflowl}
For every $k \geq 0,$ $x \in U,$ and $t> 0$ sufficiently small,
\begin{equation} \label{coneflow}
\flow^t C_k(x) = C_k(x) + kt.
\end{equation}
Moreover, for every $y\in \R^n$ we have
\begin{equation} \label{coneL}
C_k(y) \leq kt + tL\left(\frac{y}{t} \right),
\end{equation}
and equality holds in \EQ{coneL} provided that $y \in t\Gamma_k.$
\end{lem}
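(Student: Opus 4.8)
\textbf{Plan for Lemma \ref{coneflowl}.} The three assertions are closely linked through the Legendre duality between $H$ and $L$, so the plan is to establish \EQ{coneL} and its equality case first, then deduce \EQ{coneflow}. The starting point is that $C_k(x) = \max\{p\cdot x : H(p)=k\}$ and that by \EQ{ph}--\EQ{noflat} the set $H^{-1}(k)$ is compact with empty interior and equals $\partial\{H<k\}$; hence by convexity $C_k$ is also the support function of the compact convex set $H^{-1}([0,k])$, i.e.\ $C_k(y)=\max\{p\cdot y : H(p)\le k\}$. For the inequality \EQ{coneL}, fix $y$ and pick $p\in H^{-1}(k)$ with $C_k(y)=p\cdot y$. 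By definition of $L$ as the conjugate of $H$, we have $L(q)\ge p\cdot q - H(p)$ for all $q$; applying this with $q=y/t$ and $H(p)=k$ gives $tL(y/t)\ge p\cdot y - kt = C_k(y) - kt$, which is exactly \EQ{coneL}.

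For the equality case, suppose $y\in t\Gamma_k$, say $y = t q$ with $q\in\partial H(p)$ for some $p\in H^{-1}(k)$. The classical Legendre reciprocity ($q\in\partial H(p)\iff H(p)+L(q)=p\cdot q$, which follows from Young's inequality $p\cdot q\le H(p)+L(q)$ and the subdifferential characterization of equality) gives $L(q) = p\cdot q - H(p) = p\cdot q - k$. Then $tL(y/t) = tL(q) = p\cdot(tq) - kt = p\cdot y - kt$. It remains to check $p\cdot y = C_k(y)$, i.e.\ that this particular $p$ attains the max defining $C_k(y)$. This holds because for any $p'\in H^{-1}(k)$, convexity of $H$ and $q\in\partial H(p)$ give $0 = H(p')-H(p) \ge q\cdot(p'-p)$, so $p'\cdot q\le p\cdot q$, i.e.\ $p'\cdot y \le p\cdot y$; taking the max over $p'\in H^{-1}(k)$ yields $C_k(y)\le p\cdot y$, and the reverse is trivial. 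Hence equality holds in \EQ{coneL}.

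For \EQ{coneflow}, we unwind the definition: for small $t>0$ and $x\in U$, Lemma \ref{local} (applied with $u=C_k$, which is Lipschitz and whose oscillation on a small ball is controlled) lets us write $\flow^t C_k(x) = \sup_{y}\big(C_k(y) - tL((y-x)/t)\big)$ with the sup effectively over $y$ in a small ball; but in fact subadditivity $C_k(y)\le C_k(x) + C_k(y-x)$ combined with \EQ{coneL} applied to $y-x$ shows $C_k(y) - tL((y-x)/t) \le C_k(x) + C_k(y-x) - tL((y-x)/t) \le C_k(x) + kt$ for every $y$, giving $\flow^t C_k(x)\le C_k(x)+kt$ with no smallness of $t$ needed. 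For the matching lower bound we must produce a point where this is attained. Pick $p\in H^{-1}(k)$ with $C_k(x)=p\cdot x$ and a subgradient $q\in\partial H(p)$ (nonempty since $H$ is finite and convex), so $q\in\Gamma_k$; set $y := x + tq \in x + t\Gamma_k$. Then $C_k(y-x) = C_k(tq) = tL(q) + kt$ by the equality case just proved, and one checks as above that $C_k(y) = C_k(x) + C_k(y-x)$ because $p$ attains the max for both $x$ and $y-x$ (the latter since $q\in\partial H(p)$, as shown). Therefore $C_k(y) - tL((y-x)/t) = C_k(x) + C_k(y-x) - tL(q) = C_k(x)+kt$, so the sup is attained and \EQ{coneflow} follows; here we only need $t$ small enough that $y=x+tq$ stays in $U$ and within the ball on which Lemma \ref{local} localizes the sup, which is why "$t>0$ sufficiently small" appears.

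\textbf{Main obstacle.} None of the steps is deep; the one requiring care is bookkeeping the domain issue in \EQ{coneflow} — making sure the candidate maximizer $y=x+tq$ lies in $U$ (and in the localization ball of Lemma \ref{local}) for $t$ small, and that $C_k$ genuinely has finite oscillation on the relevant ball so that Lemma \ref{local} applies. The other mild point is being careful that $C_k$ is the support function of $H^{-1}([0,k])$, not just of $H^{-1}(k)$, which is what makes the subadditivity-plus-\EQ{coneL} argument and the "$p$ attains the max for $y-x$" claim go through cleanly; this uses the empty-interior/boundary description \EQ{ph} of the level sets.
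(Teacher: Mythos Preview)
Your proof is correct and uses the same ingredients as the paper's (Legendre duality, the subgradient inequality, and the choice $y=x+tq$ with $q\in\partial H(p)$), only in a slightly different order: you establish \EQ{coneL} and its equality case first and then deduce \EQ{coneflow} via subadditivity of $C_k$, whereas the paper proves the upper bound in \EQ{coneflow} directly and reads off \EQ{coneL} as the special case $x=0$. One cosmetic point: your appeal to Lemma~\ref{local} is unnecessary (and as stated would require $\osc_U C_k<\infty$, which may fail for unbounded $U$); the upper bound holds for every $y\in U$ without restriction, and for the lower bound you only need $y=x+tq\in U$, exactly as the paper does.
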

\begin{proof}
Observe that for any $y \in \R^n,$ 
\begin{align*}
C_k(y) - t L\left( \frac{y-x}{t} \right) & = \max_{p \in H^{-1}(k)} (p\cdot y) - \sup_{p\in \R^n} \left( p\cdot(y-x) - tH(p) \right) \\
& \leq \max_{p \in H^{-1}(k)} (p\cdot y) - \max_{p\in H^{-1}(k)} \left( p\cdot(y-x) \right) + kt \\
& \leq - \min_{p\in H^{-1}(k)} (-p\cdot x) + kt \\
& = C_k(x) + kt.
\end{align*}
Thus $\flow^t C_k(x) \leq C_k(x) + kt$ for all $t> 0$ and with respect to any domain $U.$ Taking $x=0$ in the above calculation, we obtain \EQ{coneL}.

Next, select $p_0 \in H^{-1}(k)$ such that $p_0\cdot x = C_k(x)$. Let $q_0\in \partial H(p_0)$ and put $y=x+tq_0,$ where $t>0$ is sufficiently small so that $y\in U;$  
then  $y-x \in 
t\partial H(p_0)$. It follows that
\beqs 
H(p) \geq H(p_0) + \frac{y-x}{t}\cdot (p - p_0) = k + \frac{y-x}{t}\cdot (p - 
p_0)\is{for all}p \in \R^n, 
\eeqs 
and hence 
\begin{equation*} 
L\left( \frac{y-x}{t} \right) = p_0 \cdot \frac{y-x}{t} - k. 
\end{equation*} 
Therefore, 
\begin{equation} \label{flosd} 
\begin{aligned} 
\flow^tC_k(x) & \geq C_k(y) - t L\left( \frac{y-x}{t} \right) \\ 
& = \sup_{p \in H^{-1}(k)} \left( (p-p_0)\cdot (y-x) + p\cdot x\right) + kt \\ 
& \geq p_0 \cdot x + kt\\ 
& = C_k(x) + kt. 
\end{aligned} 
\end{equation} 
This verifies \EQ{coneflow}, and implies that we must have equality in 
every line of \EQ{flosd}. Setting $x=0,$ we deduce that equality in 
\EQ{coneL} holds provided that $y\in t\partial H(p_0)$. However, in 
the case $x=0,$ we may take $p_0$ arbitrarily in $H^{-1}(k),$ and 
therefore we see that equality holds in \EQ{coneL} for any $y\in t 
\Gamma_k$.
\end{proof}

\begin{proof}[{\bf Proof of Lemma \ref{HJconvex}}]
For convenience in writing, we will establish \EQ{sple} for $x\in U_{2r}$ rather than $x\in U_r;$ as $r>0$ is arbitrary, there is no difference.   By Remark \ref{onur}, $u$ is Lipschitz continuous on $U_{r},$ with constant $K=A_{r}.$  Let $\eta > 0$ be so small that $ \eta \leq t_0(\osc_U u, r)$ and $\eta   N_{aK} \ll  B(0,r),$ where $a$ is the number from Remark \ref{stlip} and $N_{aK}$ is the neighborhood of the origin from Lemma \ref{neighbor}. Fix $0 < t < \eta,$ $x\in U_{2r},$ and set
\begin{equation*}
k:= \frac{\flow^t u(x) - u(x)}{t}.
\end{equation*}
Then $k \leq aK,$ and
\begin{equation}\label{magic}
\sup_{y\in B(x,r)} \left( u(y) - u(x) - t L\left( \frac{y-x}{t} \right) \right) = T^t u(x) - u(x) = kt.
\end{equation}
According to Lemma \ref{coneflowl},
\begin{equation*}
C_k(y-x) = kt + tL\left( \frac{y-x}{t} \right) \quad \mbox{for every} \ y\in x+ t \Gamma_k \supseteq x + t \partial N_k,
\end{equation*}
and since $x + t N_k \subseteq x + \eta  N_{aK} \ll B(x,r),$ we deduce from this identity in conjuction with \EQ{magic} that
\begin{equation*}
u(y) - u(x) \leq C_k (y-x) \quad \mbox{for every} \ y\in x + t\partial N_k.
\end{equation*}
Since $u\in \CCA(U),$ we therefore have
\begin{equation} \label{ccatoconvq}
u(y) - u(x) \leq C_k (y-x) \quad \mbox{for every}  \ y \in x + t N_k.
\end{equation}
Suppose that $0 < s < t_0(\osc_U u, \tilde{r}),$ where $\tilde{r}$ is so small that $B(x,\tilde{r}) \subseteq x + tN_k.$ Then according to \EQ{coneL} and \EQ{ccatoconvq},
\begin{align*}
\flow^s u(x) - u(x) & = \sup_{y \in B(x,\tilde r)} \left( u(y) - u(x) - sL\left(\frac{y-x}{s} \right) \right) \\
& \leq \sup_{y \in B(x,\tilde{r})} \left( C_k(y-x) - sL\left(\frac{y-x}{s} \right) \right) \\
& \leq ks.
\end{align*}
Dividing by $s$ and sending $s \to 0,$ we obtain $S^+ u(x) \leq k,$ as desired.
\end{proof}

\section{The Convexity Criterion to Absolutely Subminimizing}\label{end}

In this section, we study the relationship between the flow $t\mapsto \flow^t u(x)$ and the quantity $\| H(Du) \|_{L^\infty(U)},$ and then explore some consequences of the convexity criterion. We introduce a weaker \emph{pointwise convexity criterion}, and show that functions satisfying this weaker convexity criterion are necessarily absolutely subminimizing (see Proposition \ref{pconv-aml} below). The section culminates in Theorem \ref{equivalences}, which asserts the equivalence of the notions of absolutely subminimizing, the convexity criterion, and comparisons with cones.

We begin by collecting a number of preliminary results needed for the proof of Proposition \ref{pconv-aml}, some of which are also needed in Section \ref{patch}.

\begin{lem} \label{annoying}
Suppose that $u\in \lloc(U)$ satisfies
\begin{equation*}
\osc_U u \leq \alpha \quad and \quad \| H(Du) \|_{L^\infty(U)} \leq k.
\end{equation*}
Then for every $x \in U_r,$ 
\begin{equation}\label{flowspeed}
\flow^tu(x) - u(x)   \leq kt \quad \mbox{for all} \ 0 < t < t_0(\alpha,r).
\end{equation}
\end{lem}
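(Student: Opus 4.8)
The plan is to estimate $\flow^t u(x) - u(x)$ directly from the Hopf--Lax formula, using the hypothesis $\|H(Du)\|_{L^\infty(U)} \le k$ in the form provided by \LEM{htoc}, namely $u(y) - u(x) \le C_k(x-y)$ whenever $[x,y]\subseteq U$. First I would invoke \LEM{local}: since $\osc_U u \le \alpha$, for $0<t<t_0(\alpha,r)$ and $x\in U_r$ we may restrict the supremum defining $\flow^t u(x)$ to $y\in B(x,r)\cap U$, and for such $y$ the segment $[x,y]$ lies in $U$ (because $\dist(x,\partial U)>r$), so \LEM{htoc} applies to every competitor $y$. Hence
\begin{equation*}
\flow^t u(x) - u(x) = \sup_{y\in B(x,r)\cap U}\lp u(y) - u(x) - t\la{y-x}{t}\rp \le \sup_{y\in B(x,r)}\lp C_k(y-x) - t\la{y-x}{t}\rp.
\end{equation*}

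Next I would bound the right-hand side by $kt$ using the inequality \EQ{coneL} from \LEM{coneflowl}, which states $C_k(z) \le kt + tL(z/t)$ for every $z\in\R^n$. Writing $z = y-x$, this gives $C_k(y-x) - tL((y-x)/t) \le kt$ for every $y$, so the supremum above is at most $kt$, which is exactly \EQ{flowspeed}. (Alternatively, one can homogenize: $C_k(z) - tL(z/t) = t\lp C_k(z/t) - L(z/t)\rp$ since $C_k$ is positively homogeneous of degree one, and then \EQ{coneL} at $t=1$ gives $C_k(w) - L(w) \le k$ for all $w$, yielding the same conclusion.)

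There is really no serious obstacle here; the lemma is a short consequence of results already established, and the only thing to be careful about is the bookkeeping with domains — ensuring that the restriction to $B(x,r)\cap U$ is legitimate (this is precisely \LEM{local}) and that the segment condition $[x,y]\subseteq U$ needed to apply \LEM{htoc} holds for all competitors in that restricted supremum (it does, since $x\in U_r$ means $B(x,r)\subseteq U$, so $[x,y]\subseteq B(x,r)\subseteq U$ for $y\in B(x,r)$). If anything merits a second glance it is the use of \EQ{coneL}, which in \LEM{coneflowl} was stated "for $t>0$ sufficiently small" only for \EQ{coneflow}; but \EQ{coneL} itself — as is clear from its proof, which takes $x=0$ in a computation valid for all $t>0$ — holds for every $t>0$ and every $y$, so there is no conflict with the range $0<t<t_0(\alpha,r)$ imposed here.
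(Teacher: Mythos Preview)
Your proof is correct and follows essentially the same approach as the paper: both use \LEM{htoc} to bound $u(y)-u(x)$ by $C_k(y-x)$ on segments $[x,y]\subseteq U$, and then control $C_k(y-x) - tL((y-x)/t)$ by $kt$; the only difference is that the paper re-derives this last inequality inline from the definition of $L$, whereas you cite it as \EQ{coneL}. (Minor slip: in your first paragraph you wrote $C_k(x-y)$ where \LEM{htoc} gives $C_k(y-x)$, but your displayed line uses the correct orientation.)
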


\begin{proof}
The main point is that
\begin{equation} \label{annoyclm}
u(y) - u(x) \leq kt + tL\left( \frac{y-x}{t} \right) \quad \mbox{provided} \ [x,y] \subseteq U, \  t>0.
\end{equation} 
Indeed, if $[x,y]\subset U,$ the first inequality below is from Lemma \ref{htoc},  the following equality then holds for some  $p\in H^{-1}(k)$ and the final inequality is via the definition of $L:$
\begin{align*}
u(y)-u(x)\le C_k(y-x)& =p\cdot(y-x)\\ & =kt+t\lp p\cdot\frac{y-x}t-H(p)\rp \le kt+t\la{y-x}{t}.
\end{align*}
The proof is completed by using \EQ{annoyclm} and Lemma \ref{local} to obtain
\begin{equation*}
\flow^tu(x) - u(x)  = \sup_{y \in B(x,r)} \left( u(y) - u(x) - t L \left( \frac{y-x}{t} \right)\right) \leq kt\end{equation*}
for every $x\in U_r$ and $0 < t < t_0(\alpha,r).$
\end{proof}

The following connection between the quantities $S^+ u$ and $H(Du)$ allows us to deduce absolutely minimizing properties from the convexity criterion. 

\begin{lem} \label{amlconv}
Suppose that $u\in \lloc(U)$ is bounded. Then 
\begin{equation} \label{amlconeq}
\sup_{x \in U} S^+u(x) = \| H(Du) \|_{L^\infty(U)}.
\end{equation}
\end{lem}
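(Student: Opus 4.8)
The strategy is to prove the two inequalities separately. For the inequality $\sup_{x \in U} S^+u(x) \leq \| H(Du) \|_{L^\infty(U)}$, I would set $k := \| H(Du) \|_{L^\infty(U)}$ (assuming it is finite, else there is nothing to prove) and argue locally: fix $x \in U$ and $r > 0$ with $B(x,r) \ll U$. Then $u$ restricted to a neighborhood has oscillation bounded by some $\alpha$, and $\| H(Du) \|_{L^\infty} \leq k$ on that neighborhood as well, so Lemma \ref{annoying} gives $\flow^t u(x) - u(x) \leq kt$ for all sufficiently small $t > 0$. Dividing by $t$ and taking $\limsup_{t \downarrow 0}$ yields $S^+u(x) \leq k$. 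Since $x$ is arbitrary, this gives one direction.

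For the reverse inequality $\| H(Du) \|_{L^\infty(U)} \leq \sup_{x \in U} S^+u(x)$, let $k := \sup_{x \in U} S^+u(x)$; if $k = \infty$ there is nothing to show, so assume $k < \infty$. The goal is to show $H(Du(x)) \leq k$ at every point $x$ of differentiability of $u$. Fix such an $x$ and let $p := Du(x)$. For any $q \in \R^n$, taking $y = x + tq$ in the definition of $\flow^t u(x)$ gives $\flow^t u(x) \geq u(x+tq) - tL(q)$, hence
\begin{equation*}
\frac{\flow^t u(x) - u(x)}{t} \geq \frac{u(x+tq) - u(x)}{t} - L(q) \longrightarrow p \cdot q - L(q)
\end{equation*}
as $t \downarrow 0$, using differentiability of $u$ at $x$. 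Therefore $p \cdot q - L(q) \leq S^+u(x) \leq k$ for every $q \in \R^n$. Taking the supremum over $q$ and invoking the representation \EQ{HbyL} of $H$ in terms of $L$, namely $H(p) = \sup_{q} (p \cdot q - L(q))$, we conclude $H(Du(x)) = H(p) \leq k$. Since this holds at almost every $x \in U$ by Rademacher's theorem, we get $\| H(Du) \|_{L^\infty(U)} \leq k$.

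The main subtlety is making sure the local arguments are legitimate: in the first direction one must pass from the global hypothesis to a neighborhood of each point where Lemma \ref{annoying} applies (which requires finite oscillation on that neighborhood — automatic since $u$ is locally bounded and, being in $\lloc$, locally Lipschitz), and be careful that $\flow^t$ as defined with respect to $U$ agrees with the local version for small $t$, which is exactly the content of Remark \ref{indu}. In the second direction the only real input is the biconjugacy identity \EQ{HbyL}, which is already established in the excerpt, plus the elementary fact that $L(q) < \infty$ for $q$ in a neighborhood of the origin (from \EQ{lfinite}) so that the supremum over $q$ is not vacuously handled — though in fact the inequality $p\cdot q - L(q) \le k$ is trivially true when $L(q) = \infty$, so no care is even needed there. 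I expect no serious obstacle; this is a clean sandwiching argument, with the first direction being essentially a restatement of Lemma \ref{annoying} and the second a one-line consequence of the definitions and convex duality.
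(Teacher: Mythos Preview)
Your proposal is correct and follows essentially the same approach as the paper: one inequality is obtained directly from Lemma \ref{annoying}, and the other by testing the definition of $\flow^t u(x)$ with $y=x+tq$, passing to the limit at a point of differentiability, and invoking the biconjugacy identity \EQ{HbyL}. The only cosmetic difference is that the paper presents the two inequalities in the opposite order and records the slightly sharper pointwise estimate $H(Du(x))\le S^+u(x)$ before taking essential suprema.
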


\begin{proof}
Suppose $u$ is differentiable at $x\in U.$ Select $p \in \R^n$ and observe that for sufficiently small $t>0$ we have
\begin{equation*}
\begin{aligned}
\frac{\flow^t u(x)- u(x)}{t} & = \sup_{y \in U}\left( \frac{u(y) - u(x)}{t} - L\left( \frac{y-x}{t}\right)\right) \\
& \geq \frac{u( x + tp) - u(x)}{t} - L (p).
\end{aligned}
\end{equation*}
By sending $t \to 0$, taking the supremum over all $p\in \R^n$ and then using \EQ{HbyL}, we deduce that
\begin{equation*}
S^+u(x) \geq \sup_{p \in \R^n} (Du(x)\cdot p - L(p)) = H(Du(x)).
\end{equation*}
Since $u$ is differentiable almost everywhere by Rademacher's theorem, we deduce that 
\begin{equation*}
\sup_{x \in U} S^+u(x) \geq \| H(Du) \|_{L^\infty(U)}.
\end{equation*}

The reverse inequality follows from the previous lemma. Indeed, if $x \in U,$ then by \EQ{flowspeed} we have
\begin{equation*}
\flow^tu(x) - u(x)  = \sup_{y \in B(x,r)} \left( u(y) - u(x) - t L \left( \frac{y-x}{t} \right)\right) \leq t \| H(Du) \|_{L^\infty(U)}
\end{equation*}
for all sufficiently small $t > 0.$ Dividing by $t$ and passing to the limit as $t\downarrow 0,$ we obtain $S^+u(x) \leq  \| H(Du) \|_{L^\infty(U)}$ for every $x \in U.$
\end{proof}

\begin{lem}\label{Slip}
Suppose that $u\in C(U)$ satisfies the convexity criterion. Then for every $x \in V \ll U,$ the map 
\begin{equation} \label{flowinc}
t \mapsto \frac{\flow^t u(x) - u(x)}{t} \quad \mbox{is nondecreasing on the interval} \ [0,\delta],
\end{equation}
where $\delta = \delta(V)> 0$ is as in Definition \ref{dccri}. In particular
\begin{equation}\label{flowinca}
S^+u(x) = \inf_{0<t<\delta(V)} \frac{T^tu(x) -u(x)}{t} \quad  \mbox{for every} \  x \in V,
\end{equation}
and  the map $x \mapsto S^+u(x)$ is upper semicontinuous in $U.$ Moreover, $u \in \lloc(U).$
\end{lem}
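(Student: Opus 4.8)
The plan is to read off everything from the elementary convexity of the scalar map $t\mapsto \flow^t u(x)$. Fix $V\ll U$, let $\delta=\delta(V)>0$ be as in Definition~\ref{dccri}, fix $x\in V$, and set $\varphi(t):=\flow^t u(x)$, which is convex on $[0,\delta]$ with $\varphi(0)=u(x)$ by \EQ{flowz}. The secant-slope monotonicity of convex functions — for $0<s<t\le\delta$, write $s$ as the convex combination $(1-s/t)\cdot 0+(s/t)\cdot t$ to obtain $(\varphi(s)-\varphi(0))/s\le(\varphi(t)-\varphi(0))/t$ — is precisely \EQ{flowinc}. Since $\varphi(t)\le\sup_U u$ and $\varphi(0)=u(x)\ge\inf_U u$ by \EQ{flowup}, this computation simultaneously records the bound $(\flow^t u(x)-u(x))/t\le \osc_U u/\delta$ for all $0<t\le\delta$ and $x\in V$, which I will use below.

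For \EQ{flowinca}, I would note that a nondecreasing function on $(0,\delta)$ attains its infimum as a limit at the left endpoint, and by \EQ{flowinc} that limit equals $\lim_{t\downarrow 0}(\flow^t u(x)-u(x))/t=\limsup_{t\downarrow 0}(\flow^t u(x)-u(x))/t=S^+u(x)$; moreover the infimum over $(0,\delta(V))$ is unchanged if the interval is shrunk. For upper semicontinuity of $S^+u$ I would argue locally: given $x_0\in U$, choose $0<r<\dist(x_0,\partial U)$ and an open ball $V\ll U$ about $x_0$. By Remark~\ref{stinx}, $x\mapsto\flow^t u(x)$ is continuous on $U_r$ whenever $t<t_0(\osc_U u,r)$, so for each such $t$ the function $x\mapsto(\flow^t u(x)-u(x))/t$ is continuous on the neighborhood $V\cap U_r$ of $x_0$. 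Using \EQ{flowinca} together with the monotonicity \EQ{flowinc} to restrict the infimum to $0<t<\min(\delta(V),t_0(\osc_U u,r))$, $S^+u$ is an infimum of continuous functions on $V\cap U_r$, hence upper semicontinuous there; as $x_0$ was arbitrary, $S^+u\in\USC(U)$.

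Finally, to show $u\in\lloc(U)$, fix $V\ll U$ and write $C_V:=\osc_U u/\delta(V)$, so the bound from the first paragraph gives $\flow^t u(x)-u(x)\le C_V t$ for $x\in V$ and $0<t<\delta(V)$. Choose $\rho_0>0$ with $L_*:=\sup_{|q|\le\rho_0}L(q)<\infty$, which is possible since $L$ is finite near the origin by \EQ{lfinite} and convex, hence bounded on $\bar B(0,\rho_0)$ for small $\rho_0$. For $x,y\in V$ with $0<|x-y|<\rho_0\delta(V)$, set $t:=|x-y|/\rho_0\in(0,\delta(V))$; since $y$ is admissible in the supremum defining $\flow^t u(x)$,
\begin{equation*}
u(y)-u(x)\le\bigl(\flow^t u(x)-u(x)\bigr)+tL\Bigl(\frac{y-x}{t}\Bigr)\le C_V t+tL_*=\frac{C_V+L_*}{\rho_0}\,|x-y|,
\end{equation*}
and symmetry in $x$ and $y$ upgrades this to a two-sided bound; for $x,y\in V$ with $|x-y|\ge\rho_0\delta(V)$ the trivial estimate $|u(x)-u(y)|\le\osc_U u\le(\osc_U u/(\rho_0\delta(V)))\,|x-y|$ closes the gap. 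Hence $u$ is Lipschitz on every $V\ll U$.

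The only step demanding any care is the upper semicontinuity argument: one cannot invoke continuity of $x\mapsto\flow^t u(x)$ for all $t<\delta(V)$ directly (Remark~\ref{stinx} gives it only below the threshold $t_0(\osc_U u,r)$, and the dependence of the operator on the ambient domain also lurks there), so the monotonicity \EQ{flowinc} must be used first to legitimately truncate the infimum in \EQ{flowinca} to the range of $t$ where continuity is available. Everything else is one-variable convexity plus the defining Hopf--Lax formula for $\flow^t$.
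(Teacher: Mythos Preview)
Your proof is correct. The arguments for \EQ{flowinc}, \EQ{flowinca}, and the upper semicontinuity of $S^+u$ are essentially the paper's, though you are more careful than the paper in explaining why the infimum in \EQ{flowinca} may be truncated to the range of $t$ where Remark~\ref{stinx} delivers continuity of $x\mapsto\flow^t u(x)$.

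The local Lipschitz argument is where you genuinely diverge. The paper sets $k:=\sup_{x\in V}(\flow^\delta u(x)-u(x))/\delta$, brings in the neighborhood $N_k$ from Lemma~\ref{neighbor}, and for nearby $x,y$ chooses $t$ with $y-x\in t\Gamma_k$; it then uses the identity $C_k(y-x)=kt+tL((y-x)/t)$ from Lemma~\ref{coneflowl} together with the Hopf--Lax bound to deduce $u(y)-u(x)\le C_k(y-x)\le K_k|y-x|$. Your route bypasses the cone machinery entirely: you use only that $L$ is bounded on a small ball $\bar B(0,\rho_0)$ by \EQ{lfinite}, scale $t=|x-y|/\rho_0$ so that $(y-x)/t$ lands on that ball, and feed $y$ directly into the supremum defining $\flow^t u(x)$. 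This is more elementary and self-contained; the paper's version ties the Lipschitz constant to the cone function $C_k$, which is consistent with the cone-comparison framework used throughout the rest of the article, but for the bare statement of the lemma your argument is shorter and requires neither Lemma~\ref{neighbor} nor Lemma~\ref{coneflowl}.
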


\begin{proof}
The condition \EQ{flowinc} follows  from the convexity criterion, and \EQ{flowinca} is then immediate.  The upper semicontinuity of $x\mapsto S^+u(x)$ follows from \EQ{flowinca} and Remark \ref{stinx}.  To demonstrate the local Lipschitz continuity of $u,$ select $V \ll U.$ By \EQ{flowinc} we have
\begin{equation} \label{kVbnd}
k:= \sup_{x\in V, 0 < t \le \delta} \frac{\flow^t u(x) - u(x)}{t} = \sup_{x\in V} \frac{\flow^{\delta} u(x) - u(x)}{\delta}< \infty,
\end{equation}
where $\delta = \delta(V) > 0.$ Select $r_1>0$ so small that $\bar B(0,r_1) \subseteq \delta N_k,$ where $N_k$ is the neighborhood of the origin from Lemma \ref{neighbor}. Select $x,y\in V$ such that $|x-y| \leq r_1.$ Then we may select $0 < t < \delta$ such that $y-x \in t \Gamma_k,$ with $\Gamma_k$ also as in Lemma \ref{neighbor}. According to \EQ{kVbnd} and Lemma \ref{coneflowl}, we find that
\begin{equation*}
u(y) - u(x) \leq tk + L\left(\frac{y-x}{t} \right) = C_k(y-x) \leq K_k |y-x|.
\end{equation*}
Reversing the roles of $x$ and $y,$ we deduce that
\begin{equation*}
|u(x) - u(y)| \leq K_k|x-y| \quad \mbox{for every} \ x,y\in V, \ |x-y| \leq r_1.
\end{equation*}
Since $u$ is continuous, we have $\osc_V u < \infty,$ and thus 
\begin{equation*}
|u(x) - u(y)| \leq \osc_V u \leq \frac{\osc_V u}{r_1} |x-y| \quad \mbox{for every} \ x,y\in V, \ |x-y| \geq r_1.
\end{equation*}
Therefore $u$ is Lipschitz on $V$ with constant $\max\!\left\{ K_k, \osc_V u / r_1 \right\}$. Since $V \ll U$ is arbitrary, we deduce that $u \in \lloc(U)$.
\end{proof}

We now introduce a \emph{pointwise} version of the convexity criterion, which will turn out to be equivalent to the usual convexity criterion. It is convenient to use this pointwise notion when verifying that a given function is absolutely subminimizing, as it is apparently weaker. It is used for example in the proof of the patching lemma in the next section.

\begin{definition}\label{ptwisecc}
We say that a bounded function $u\in\lloc(U) $ \emph{satisfies the pointwise convexity criterion} in $U$ provided that the map $x \mapsto S^+u(x) $ is upper semicontinuous in $U,$ and that for each $x\in U$ there exists $\delta = \delta(x)>0$ such that 
\beq{Pccri}
\mbox{the map} \ t\mapsto \flow^tu(x)\quad \mbox{is convex on the interval}  \  [0,\delta(x)].
\eeq
\end{definition}

\medskip

According to Lemma \ref{Slip}, the convexity criterion is stronger than the pointwise convexity criterion. Below in Proposition \ref{pconv-aml}, we see that the pointwise convexity criterion is sufficient for the absolutely subminimizing property, and thus equivalent to the convexity criterion. With this end in mind, we cannot weaken Definition \ref{ptwisecc} by removing the hypothesis that $x\mapsto S^+u(x)$ is upper semicontinuous, as the following simple example demonstrates.

\begin{example} \label{notpccri}
Consider $H_1(p) = |p|$ and the function $u(x) = -|x|.$ It is easy to verify that with respect to $U=B(0,1),$
\begin{equation*}
\flow^t u(x) = \sup_{y \in B(x,t)} u(y) = \left\{ \begin{array}{ll}
t - |x| & \mrbox{for} x \neq 0\mrbox{and} 0\leq t<|x|, \\
0 & \mrbox{for} x = 0 \mrbox{and} 0\leq t<1.
\end{array} \right.
\end{equation*}
Thus for every $x\in U,$ the map $t\mapsto \flow^t u(x)$ is linear (and hence convex) on an  interval $[0,\delta(x)],$ but it is easy to see that $u$ is not absolutely subminimizing for $H_1$ in $B(0,1).$ This is due to the failure of the map $x\mapsto S^+u(x)$ to be upper semicontinuous, since 
\begin{equation*}
S^+u(x) = \begin{cases}
1 & \mbox{if} \ x \in B(0,1) \setminus \{ 0 \}, \\
0 & \mbox{if} \ x = 0.
\end{cases}
\end{equation*}
\end{example}

The next result, in other guises,  is a well-known and important technical tool  in the theory of absolutely minimizing functions born  in \cite{CEG}. Here we state a version in terms of the convexity criterion. More common in the literature is a slightly different result put in terms of cones; see for example \cite[Proposition 3.4]{GWY}.

\begin{lem}[Increasing slope estimate]\label{Pflowslope} 
Assume that $u\in\lloc(U)$ is bounded and satisfies the pointwise convexity criterion \EQ{Pccri}. Suppose that $x,y \in U$ and $0<t<\delta(x)$ are such that 
\begin{equation} \label{ysppt}
\flow^t u(x) = u(y) - t L\left( \frac{y-x}{t} \right).
\end{equation}
Then 
\begin{equation} \label{incsl}
\frac{\flow^tu(x) - u(x)}{t} \leq  S^+u(y).
\end{equation}
\end{lem}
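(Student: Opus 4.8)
### Proof Proposal for Lemma \ref{Pflowslope} (Increasing Slope Estimate)

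The plan is to exploit the semigroup property of the flow together with the pointwise convexity criterion applied at the point $y$. Set $\varphi(s) := \flow^s u(y)$ and let $k := (\flow^t u(x) - u(x))/t$; our goal is to show $k \leq S^+u(y)$. First I would use the defining relation \EQ{ysppt} to move from $x$ to $y$: since $\flow^t u(x) = u(y) - tL((y-x)/t)$, we should be able to show that for small $h > 0$,
\begin{equation*}
\flow^{t+h}u(x) \geq \flow^h u(y) - tL\left( \frac{y-x}{t} \right) = \flow^h u(y) + \flow^t u(x) - u(y),
\end{equation*}
by taking $y$ as the "intermediate point" in the Hopf-Lax formula for $\flow^{t+h}u(x) = \flow^h(\flow^t u)(x)$ — more precisely, $\flow^h(\flow^t u)(x) \geq \flow^t u(y') - h L((y'-x)/h)$ won't directly give it; instead I want to first flow from $x$ to $y$ over time $t$, then flow $u$ from $y$ over time $h$, and use convexity of $L$ as in the proof of \LEM{local}. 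Rearranging yields
\begin{equation*}
\frac{\flow^{t+h}u(x) - \flow^t u(x)}{h} \geq \frac{\flow^h u(y) - u(y)}{h}.
\end{equation*}

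Next I would take $\limsup_{h \downarrow 0}$ on the right, which by definition gives $S^+u(y)$. On the left, the convexity criterion at $x$ (valid on $[0,\delta(x)]$, and $t < \delta(x)$) guarantees that the map $s \mapsto \flow^s u(x)$ is convex near $s = t$, so its right-hand difference quotient at $s = t$ is at least the slope of the secant from $0$ to $t$, namely
\begin{equation*}
\liminf_{h \downarrow 0} \frac{\flow^{t+h}u(x) - \flow^t u(x)}{h} \geq \frac{\flow^t u(x) - u(x)}{t} = k.
\end{equation*}
(Here I am using that a convex function's slopes are nondecreasing, together with $\flow^0 u(x) = u(x)$.) Combining the two displays gives $k \leq S^+u(y)$, which is exactly \EQ{incsl}. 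I would need $t + h$ to stay within the domain where \EQ{uepadd} and the convexity of $\varphi_x(s) := \flow^s u(x)$ both apply, but since we only send $h \downarrow 0$ this is harmless, possibly after first noting (via \LEM{local} or Remark \ref{stlip}) that all flows in question are finite and the semigroup identity holds for small enough parameters.

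The main obstacle I anticipate is carefully justifying the first inequality, $\flow^{t+h}u(x) \geq \flow^h u(y) + \flow^t u(x) - u(y)$. The cleanest route is: for any $w \in U$,
\begin{equation*}
\flow^{t+h}u(x) \geq u(w) - (t+h)L\left(\frac{w-x}{t+h}\right) \geq u(w) - hL\left(\frac{w-y}{h}\right) - tL\left(\frac{y-x}{t}\right),
\end{equation*}
using convexity of $L$ with the splitting $\frac{w-x}{t+h} = \frac{h}{t+h}\cdot\frac{w-y}{h} + \frac{t}{t+h}\cdot\frac{y-x}{t}$ (exactly as in the proof of \LEM{local}); then take the supremum over $w \in U$ (restricted to a small ball if needed so the sup is a max), obtaining $\flow^{t+h}u(x) \geq \flow^h u(y) - tL((y-x)/t)$, and finally substitute $-tL((y-x)/t) = \flow^t u(x) - u(y)$ from \EQ{ysppt}. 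A secondary subtlety is ensuring $y$ lies in $U$ (so that $\flow^h u(y)$ makes sense with respect to the same domain $U$) — but \EQ{ysppt} presupposes the sup in $\flow^t u(x)$ is attained at $y \in U$, so this is automatic. Everything else is routine manipulation of difference quotients and the elementary fact that secant slopes of a convex function are monotone in each endpoint.
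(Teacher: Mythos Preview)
Your argument contains a genuine gap in the final ``combining'' step.  You correctly establish, for small $h>0$, the inequality
\[
\frac{\flow^{t+h}u(x)-\flow^t u(x)}{h}\ \ge\ \frac{\flow^{h}u(y)-u(y)}{h},
\]
and you correctly note, by convexity of $s\mapsto \flow^s u(x)$ on $[0,\delta(x)]$, that the left side is $\ge k$.  But these two facts together say only that the left side dominates both $k$ and the right side; they do \emph{not} yield $k\le S^+u(y)$.  Concretely: if the left side tends to $5$, the right side tends to $3$, and $k=5$, all your hypotheses are satisfied yet the desired conclusion $k\le S^+u(y)=3$ fails.  The trouble is that your Hopf--Lax splitting produces an \emph{upper} bound on $(\flow^h u(y)-u(y))/h$, whereas proving \EQ{incsl} requires a \emph{lower} bound on that quantity (or on nearby ones).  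By convexity of $L$, the inequality you derive cannot be reversed.

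The paper's proof supplies exactly this missing lower bound, but at the intermediate points $z_\lambda:=\lambda x+(1-\lambda)y$ rather than at $y$ itself.  Using the pointwise convexity at $x$ it first shows $u(z_\lambda)\le \lambda u(x)+(1-\lambda)u(y)$, and from this obtains
\[
\frac{\flow^{\lambda t}u(z_\lambda)-u(z_\lambda)}{\lambda t}\ \ge\ \frac{\flow^{t}u(x)-u(x)}{t}=k.
\]
Since $z_\lambda\to y$ and $\lambda t\to 0$, Lemmata \ref{annoying} and \ref{amlconv} then bound the left side by $\sup_{B(y,r)}S^+u$, and the assumed upper semicontinuity of $S^+u$ (part of the pointwise convexity criterion, which your argument never invokes) closes the argument as $r\downarrow 0$.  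Your route, as written, cannot be repaired without inserting an inequality of this opposite type.
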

\begin{proof}

Set $z_\lambda:= \lambda x + (1-\lambda) y$ for every $0 < \lambda < 1.$ Observe that
\begin{align*}
\flow^{(1-\lambda)t} u(x) & \geq u(z_\lambda) - (1-\lambda)t L\left( \frac{z_\lambda-x}{(1-\lambda)t} \right) \\
& = u(z_\lambda) - (1-\lambda) t L\left( \frac{y-x}{t} \right) \\
& = u(z_\lambda) - (1-\lambda) u(y) + (1-\lambda) \flow^t u(x). \\
& \geq u(z_\lambda) - (1-\lambda) u(y) + \flow^{(1-\lambda)t} u(x) - \lambda u(x),
\end{align*}
where the last inequality is obtained from the pointwise convexity criterion. By rearranging this inequality, we obtain
\begin{equation} \label{blel}
u(z_\lambda) \leq \lambda u(x) + (1-\lambda) u(y).
\end{equation}
Using \EQ{blel}, we see that
\begin{align*}
\flow^{\lambda t} u(z_\lambda) - u(z_\lambda) & \geq u(y) - \lambda t L\left( \frac{y-z_\lambda}{\lambda t} \right) -\lambda u(x) - (1-\lambda) u(y) \\
& = \lambda \left( u(y) - t L\left( \frac{y-x}{t} \right) -  u(x) \right) \\
& = \lambda \left( \flow^t u(x) - u(x) \right).
\end{align*}
Dviding by $\lambda t,$ we obtain
\begin{equation} \label{bleg}
\frac{\flow^t u(x) - u(x)}{t} \leq  \frac{\flow^{\lambda t} u(z_\lambda) - u(z_\lambda)}{\lambda t} \quad \mbox{for every} \ 0 < \lambda < 1.
\end{equation}
Select $0 < r < \dist(y, \partial U).$ For sufficiently small $\lambda > 0,$ we have $z_\lambda \in B(y, r/2)$ and $\lambda t < t_0\left(\osc_U u,r/2\right),$ and for such $\lambda$ we have
\begin{equation}\label{fgest}
\frac{\flow^{\lambda t} u(z_\lambda) - u(z_\lambda)}{\lambda t} \leq  \| H(Du) \|_{L^\infty(B(y, r))} = \sup_{z \in B(y, r)} S^+u(z)
\end{equation}
by Lemmata \ref{annoying} and \ref{amlconv}. Combining \EQ{bleg} and \EQ{fgest} yields
\begin{equation} \label{blek}
\frac{\flow^t u(x) - u(x)}{t} \leq \sup_{z \in B(y, r)} S^+u(z).
\end{equation}
Recall that the map $x\mapsto S^+u(x)$ is upper semicontinuous since $u$ satisfies the pointwise convexity criterion. Thus by sending $r \downarrow 0$ in \EQ{blek} we obtain \EQ{ysppt}.
\end{proof}

Our argument for the following proposition is similar in spirit to the one found in Section 4 of \cite{JS}.

\begin{prop}\label{pconv-aml}
Suppose that $u\in\lloc(U)$ is bounded and satisfies the pointwise convexity criterion \EQ{Pccri}. Then $u$ is absolutely subminimizing in $U.$
\end{prop}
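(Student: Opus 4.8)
The plan is to deduce the absolutely subminimizing property in two stages: first that $u$ \ccas\ in the sense of Definition~\ref{dcc}, and then that comparisons with cones from above already force the absolutely subminimizing property. The second stage is routine, so let me record it first. Fix $V\ll U$ and $v\in\lloc(V)\cap C(\vc)$ with $v=u$ on $\bv$, $u\ge v$ in $V$, and $k:=\|H(Dv)\|_{L^\infty(V)}<\infty$; by Lemma~\ref{htoc}, $v(x)-v(y)\le C_k(x-y)$ whenever $[x,y]\subseteq V$. It suffices to treat the case where $V$ is a ball, since a function absolutely subminimizing on every ball $\ll U$ is readily seen to be absolutely subminimizing on every $V\ll U$. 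If $\|H(Du)\|_{L^\infty(V)}>k$ then Lemma~\ref{htoc} supplies a segment $[x_1,z]\subseteq V$ with $u(z)-u(x_1)>C_k(z-x_1)$. Put $\psi(x):=u(x_1)+C_k(x-x_1)$ and let $W$ be the connected component of $\{x\in V:u(x)>\psi(x)\}$ containing $z$; then $W\ll U$, $x_1\notin W$, and, using the ball geometry together with the cone bound for $v$ along radial segments, $v\le u$, and $v=u$ on $\bv$, one checks that $u\le\psi$ on $\partial W$. Since $u$ \ccas, this gives $\max_{\overline W}(u-\psi)=\max_{\partial W}(u-\psi)\le 0$, contradicting $u(z)>\psi(z)$.

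For the first stage I would argue by contradiction. If $u$ fails to \cca, there are $k\ge 0$, $W\ll U$ and $x_0\notin\overline W$ such that, after subtracting a constant, $u\le\psi:=C_k(\cdot-x_0)$ on $\partial W$ while $u(z)>\psi(z)$ for some $z\in W$; replacing $W$ by the component of $W\cap\{u>\psi\}$ containing $z$, we may assume $u>\psi$ in $W$, $u=\psi$ on $\partial W$, and $x_0\notin\overline W$. Put $\nu:=\max_{\overline W}(u-\psi)>0$; since $u-\psi$ vanishes on $\partial W$, the set $E:=\{x\in\overline W:(u-\psi)(x)=\nu\}$ is a nonempty compact subset of $W$, and we fix $z^*\in E$ maximizing $u$ over $E$. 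Because $u\le\psi+\nu$ on $\overline W$ with equality at $z^*$, the rigid upward flow of cones from Lemma~\ref{coneflowl} (namely $\flow^t\psi(z^*)=\psi(z^*)+kt$ for small $t$) gives $\flow^tu(z^*)\le u(z^*)+kt$, so the difference quotient $s_t:=(\flow^tu(z^*)-u(z^*))/t$, nondecreasing by the pointwise convexity criterion, satisfies $S^+u(z^*)\le s_t\le k$. Writing $\flow^tu(z^*)=u(y_t)-tL((y_t-z^*)/t)$ for an optimal $y_t$, which exists and tends to $z^*$ as $t\downarrow 0$ by Lemma~\ref{local}, we get $u(y_t)\ge\flow^tu(z^*)\ge u(z^*)+s_t t$ and, by Lemma~\ref{Pflowslope}, $S^+u(y_t)\ge s_t$. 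Combining these with $u(y_t)\le\psi(y_t)+\nu$ and the convexity-along-flow-lines inequality used in the proof of Lemma~\ref{Pflowslope} should force $y_t\in E$ for small $t>0$, whence $u(y_t)>u(z^*)=\max_Eu$ as soon as $s_t>0$, a contradiction; the degenerate possibility that $s_t\equiv 0$ (equivalently $S^+u$ vanishes on $E$) is disposed of by using the increasing slope estimate to locate a nearby point of $W$ where $u-\psi<\nu$ but $S^+u>0$, and pushing its difference quotients against $k$ via Lemma~\ref{amlconv} and the finite-speed bound of Lemma~\ref{annoying}.

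The main obstacle I expect is precisely the step just sketched: closing the loop at the extremal point $z^*$, i.e.\ showing that the optimal flow point $y_t$ lands back in the maximum set $E$ (so that the maximality of $u$ on $E$ is violated), and handling the case $S^+u|_E\equiv 0$. This requires orchestrating the convexity criterion, the increasing slope estimate (Lemma~\ref{Pflowslope}), the cone identities of Lemma~\ref{coneflowl}, and the identification $\sup_{x\in U}S^+u(x)=\|H(Du)\|_{L^\infty(U)}$ of Lemma~\ref{amlconv}, in the spirit of the classical comparison-with-cones arguments of \cite{CEG} and of \cite[Section~4]{JS}. Once the first stage is in hand, the second stage above completes the proof.
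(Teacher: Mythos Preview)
Your route through comparison with cones differs from the paper's direct argument, and both stages have genuine gaps. In Stage~2, the reduction to balls is not ``readily seen'': a competitor $v$ on a non-convex $V$ does not restrict to a competitor on any ball $B\subseteq V$ (since $v\neq u$ on $\partial B$ in general), so absolutely subminimizing on balls does not obviously yield the same on arbitrary $V\ll U$; and your boundary argument for $\partial W\cap\partial V$ genuinely needs the segment $[x_1,x]\subseteq V$, which fails without convexity. In Stage~1, the step ``should force $y_t\in E$'' does not go through. From $u(y_t)=u(z^*)+s_t t+tL((y_t-z^*)/t)$ and the subadditivity bound $\psi(y_t)\le\psi(z^*)+kt+tL((y_t-z^*)/t)$ (via Lemma~\ref{coneflowl}) one obtains only $u(y_t)-\psi(y_t)\ge\nu+(s_t-k)t$, and since merely $s_t\le k$ is known this does not reach $\nu$; thus $y_t\in E$ is unproved unless $s_t=k$, which you have no way to force. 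You correctly flag this as the main obstacle, and it is.

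The paper avoids both difficulties by never leaving the competitor $v$. With $k=\|H(Dv)\|_{L^\infty(V)}<l<\sup_V S^+u$, it sets $E:=\{x\in\bar V:S^+u(x)\ge l\}$ and picks $x\in E\cap V$ maximizing $u-v$ on $E$. Choosing small $t$ with $(\flow^t v(x)-v(x))/t<l$ and an optimal $y$ for $\flow^t u(x)$, Lemma~\ref{Pflowslope} gives $S^+u(y)\ge(\flow^t u(x)-u(x))/t\ge S^+u(x)\ge l$, so $y\in E$ automatically---the set $E$ is defined by the very quantity the increasing slope estimate transports. A short chain then yields $u(y)-v(y)>u(x)-v(x)$, a contradiction. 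Your $E$ is defined via $u-\psi$, which does not transport along the flow without the missing equality $s_t=k$; redefining your extremal set in terms of $S^+u$ and comparing $u$ to the cone $\psi$ directly, as the paper compares $u$ to $v$, would repair Stage~1 and make Stage~2 unnecessary.
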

\begin{proof}
Assuming that $u$ is not absolutely subminimizing, we select $V \ll U$ and $v \in \lloc(V)$ such that $u \geq v$ in $V,$ $u=v$ on $\partial V,$ and
\begin{equation*}
k: = \| H(Dv) \|_{L^\infty(V)} < \| H(Du) \|_{L^\infty(V)}.
\end{equation*}
According to \EQ{amlconeq}, we may rewrite this as
\begin{equation*}
k = \sup_{x\in V} S^+ v(x) < \sup_{x\in V} S^+ u(x).
\end{equation*}
Take $l$ such that $k < l < \sup_{x\in V} S^+u(x)$ and define $E: = \{ x \in \bar V : S^+u(x) \geq l \}.$ Notice that $E$ is closed since $x\mapsto S^+u(x)$ is upper semicontinuous, and $E\cap V$ is nonempty. We claim that there exists $x\in E\cap V$ such that 
\beq{achieved}
u(x)-v(x)=m:=\max_E (u-v). 
\eeq
If $m=0$ we may take any point $x\in E\cap V,$ since $u-v \geq 0$ in $V.$ If $m>0,$ we select any $x\in E$ satisfying \EQ{achieved}, since in this case $x\not\in \partial V$ due to the fact that $u-v = 0$ on $\partial V.$

We now proceed to derive a contradiction from \EQ{achieved}. By the pointwise convexity criterion there is $\delta(x) > 0$ such that $ t\mapsto \flow^tu(x)$ is convex on the interval $[0, \delta(x)].$ Select a small $0 < t < \max\{ \delta(x), t_0(\osc_U u, \dist(x,\partial V))\}$ such that 
\begin{equation} \label{flowvu}
\frac{\flow^t v(x) - v(x)}{t} < l,
\end{equation}
and choose $y \in B(x,\dist(x,\partial V) )$ so that
\begin{equation}\label{yptinc}
\flow^t u(x) = u(y) - t L\left( \frac{y-x}{t} \right).
\end{equation}
The monotonicity property \EQ{flowinc} is clearly valid on the interval $[0,\delta(x))$ for a function $u$ satisfying the pointwise convexity criterion. Using this together with Lemma \ref{Pflowslope}, we obtain
\beqs
S^+u(y) \geq \frac{\flow^t u(x) - u(x)}{t} \geq S^+u(x) \geq l.
\eeqs
Thus $y \in E.$ Using \EQ{flowvu} and \EQ{yptinc}, we see that
\begin{align*}
v(y) & \leq \flow^t v(x) + t L\left( \frac{y-x}{t} \right) \\
& < t l + v(x) + t L\left( \frac{y-x}{t} \right) \\
& \leq t S^+u(x) + u(x) + t L\left( \frac{y-x}{t} \right) - (u(x) - v(x))\\
& \leq \left( \flow^t u(x) - u(x) \right) + u(x) + t L\left( \frac{y-x}{t} \right)  - (u(x) - v(x)) \\
& = u(y)  - (u(x) - v(x)) .
\end{align*}
Thus $u(y) - v(y) >  u(x) - v(x)=m,$ contradicting the definition of $m.$ 
\end{proof}

At this point, we have proved the equivalence of the notions of absolutely subminimizing, comparisons with cones from above, the convexity criterion, and the pointwise convexity criterion. We summarize this below in Theorem \ref{equivalences}. The result is new insofar as we  make no regularity assumptions on $H,$ split the definition of absolute minimizer into two halves, and include a \emph{pointwise} convexity criterion.

The equivalence between absolute minimizers and functions satisfying comparisons with cones was first proved for $H_2(p) = |p|^2$ in \cite{CEG}, and for a more general $C^2$ Hamiltonian in \cite{GWY}. The equivalence between the convexity criterion and viscosity subsolutions of Aronsson's equation was proved by Juutinen and Saksman \cite{JS} for $H\in C^2(\R^n \setminus \{ 0 \}) \cap C^1(\R^n),$ which links the convexity and concavity criterion with absolute minimizers for $H\in C^2$ after taking into account the results in \cite{GWY}.

\begin{thm}\label{equivalences} Assume that $u:U\to \R$ is bounded. Then the following statements are equivalent:
\begin{enumerate}
\item $u$ is absolutely subminimizing in $U$;
\item  $u$ satisfies comparisons with cones from above in $U$;
\item $u$ satisfies the convexity criterion in $U$;
\item $u$ satisfies the pointwise convexity criterion in $U.$
\end{enumerate}
\end{thm}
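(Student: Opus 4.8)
The plan is to establish the cyclic chain of implications $(1) \Rightarrow (2) \Rightarrow (3) \Rightarrow (4) \Rightarrow (1)$, since each link has essentially already been forged in the preceding sections; the actual work is to cite the right result and, at each junction, to check that the regularity hypothesis demanded by the next notion in the chain has in fact been supplied. It is worth keeping in mind that each of the four conditions carries an implicit regularity requirement: $(1)$ and $(4)$ presuppose $u \in \lloc(U)$, $(2)$ presupposes $u \in \USC(U)$, and $(3)$ presupposes $u \in C(U)$, so part of the content of \THM{equivalences} is that, for bounded $u$, any one of these forces the rest.

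For $(1) \Rightarrow (2)$ I would simply invoke \PROP{aml-cca}, whose hypothesis $u \in \lloc(U)$ is built into \DEF{sam}. For $(2) \Rightarrow (3)$ I would first apply \LEM{ccca-lip} to upgrade $u$ from upper semicontinuous to continuous (indeed locally Lipschitz), which makes \PROP{cca-conv} available; here the boundedness of $u$ enters, guaranteeing $\osc_U u < \infty$. Then, given any $V \ll U$, compactness of $\bar V$ in $U$ yields an $r > 0$ with $\bar V \subseteq U_r$, and \PROP{cca-conv} provides $\eta = \eta(\osc_U u, r) > 0$ such that $t \mapsto \flow^t u(x)$ is convex on $[0,\eta)$ for every $x \in U_r \supseteq V$; taking any $\delta \in (0,\eta)$ gives convexity on the closed interval $[0,\delta]$, which is exactly the convexity criterion of \DEF{dccri}.

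For $(3) \Rightarrow (4)$ I would appeal to \LEM{Slip}, which already delivers two of the three ingredients of \DEF{ptwisecc}: that $u \in \lloc(U)$ and that $x \mapsto S^+u(x)$ is upper semicontinuous in $U$. The remaining ingredient, pointwise convexity of $t \mapsto \flow^t u(x)$ on some interval $[0,\delta(x)]$, is immediate from $(3)$ itself upon choosing a small ball $V$ with $x \in V \ll U$ and setting $\delta(x) := \delta(V)$. Finally, $(4) \Rightarrow (1)$ is precisely \PROP{pconv-aml}, whose hypotheses (bounded, $u \in \lloc(U)$, pointwise convexity criterion) are all part of $(4)$.

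The point that needs care is the bookkeeping of regularity at each step rather than any single hard estimate; the theorem is a consolidation of work already done. If one wishes to name the deepest ingredient feeding the cycle, it is the step $(2) \Rightarrow (3)$, i.e. \PROP{cca-conv} (equivalently \LEM{HJconvex}), whose proof rests on the structure of the subdifferential sets $\Gamma_k$ and $W_k$ in \LEM{neighbor} together with the exact cone-flow identity of \LEM{coneflowl}; everything else in the loop is comparatively soft, and I would expect no genuine obstacle in assembling the proof of \THM{equivalences} itself.
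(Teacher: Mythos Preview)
Your proposal is correct and follows exactly the same cyclic chain $(1)\Rightarrow(2)\Rightarrow(3)\Rightarrow(4)\Rightarrow(1)$ as the paper, invoking the same results (\PROP{aml-cca}, \PROP{cca-conv}, \LEM{Slip}, \PROP{pconv-aml}) at each step. Your explicit attention to the regularity bookkeeping---in particular noting that \LEM{ccca-lip} is needed to pass from $\USC$ to $C$ before applying \PROP{cca-conv}---is a small expository refinement over the paper's terse proof, which leaves that upgrade implicit.
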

\begin{proof}
Proposition \ref{aml-cca} is the assertion (i)$\implies$(ii), and Proposition \ref{cca-conv} states that (ii)$\implies$(iii). That (iii)$\implies$(iv) is a consequence of Lemma \ref{Slip}. Finally, Proposition \ref{pconv-aml} asserts that (iv)$\implies$(i).
\end{proof}

Conspicuously absent from the list of equivalences in Theorem \ref{equivalences} is a statement about $u$ being a viscosity subsolution of Aronsson's equation \EQ{ae}. We prove in Appendix \ref{viscsol} that conditions (i)-(iv) are sufficient for $u$ to be a viscosity subsolution of \EQ{ae}, while the necessity of the convexity criterion for the Aronsson equation is an open problem, as mentioned in the introduction.


\section{The patching lemma}\label{patch}

In this section we surmount the final technical hurdle and finish the proof of Theorem \ref{compth}. The following lemma states that we may approximate any function satisfying the pointwise convexity criterion with another such function $v$ which has the additional property that $S^+v > 0.$

\begin{lem}[Patching Lemma]\label{patching}
Suppose that $U$ is bounded and $u \in \lloc(U) \cap C(\uc)$ satisfies the pointwise convexity criterion. Then there is a family of functions $\{ u_\gamma \}_{\gamma>0} \subseteq \lloc(U)\cap C(\uc)$ with the following properties:
\begin{enumerate}
\item $u_\gamma=u$ on $\bu$ and $u_\gamma\le u$ on $\uc;$
\item $u_\gamma \to u$ uniformly on $\bar U$ as $\gamma\downarrow 0;$
\item for each $\gamma >0,$ the function $u_\gamma$ satisfies pointwise convexity criterion;
\item $S^+ u_\gamma(x) \geq \gamma$ for every $x\in U$ and $\gamma > 0$.
\end{enumerate}
\end{lem}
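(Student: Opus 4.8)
The plan is to build $u_\gamma$ by "patching" together $u$ with cones that guarantee a definite positive slope in the Hamilton-Jacobi flow. Concretely, for $\gamma > 0$ let $k(\gamma)$ be the unique value with $C_{k(\gamma)}$ having the right calibration (so that $S^+$ of a $C_{k(\gamma)}$-cone equals $\gamma$), using that $k \mapsto C_k$ is continuous and strictly increasing and that, by Lemma~\ref{coneflowl}, $\flow^t C_k(x) = C_k(x) + kt$, so $S^+ C_k(\cdot - x_0)(x) = k$ wherever this is finite. Fix a countable dense set $\{x_j\}_{j\ge 1} \subseteq U$. The idea is to define
\begin{equation*}
u_\gamma(x) := \inf_{j \ge 1} \Big( u(x_j) + C_{k(\gamma)}(x - x_j) + \varepsilon_j \Big)
\end{equation*}
for a rapidly decreasing sequence $\varepsilon_j \downarrow 0$, so that $u_\gamma \le u$ (since one should also throw in, or take a minimum with, translates that touch $u$ from below) while $u_\gamma$ agrees with $u$ on $\partial U$ and converges uniformly to $u$. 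The exact form of the patching — whether one takes an infimum of cones anchored at interior points, or whether, following the structure in \cite{CGW} and \cite{JWY}, one patches $u$ on a sequence of small balls — needs to be chosen so that all four conclusions hold simultaneously; the cleanest route is to replace $u$ locally by the largest function $\le u$ that satisfies comparisons with cones from above and has, near each point, a cone of opening $k(\gamma)$ below it.

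**Order of steps.** First I would record the calibration lemma: for each $\gamma > 0$ there is $k = k(\gamma) > 0$ with $k(\gamma) \downarrow 0$ as $\gamma \downarrow 0$, such that translates of $C_k$ serve as the lower obstacles forcing $S^+ \ge \gamma$; this is immediate from Lemma~\ref{coneflowl} together with \EQ{ccoer}. Second, I would define $u_\gamma$ as an infimum of $u$ and of a family of $C_{k(\gamma)}$-cones (one needs enough cones that the infimum is genuinely pushed down near every interior point, but the cones must stay above $u$'s boundary behavior — here the boundedness of $U$ and $u \in C(\bar U)$ are used, as is the coercivity \EQ{ccoer} which ensures a single cone dominates $u$ at scale $\diam U$). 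Third, verify (i) and (ii): $u_\gamma \le u$ is built in; $u_\gamma = u$ on $\partial U$ follows because the cones are steep enough (large $M_{k(\gamma)}$) that near the boundary they lie above $u$; uniform convergence follows from $k(\gamma) \to 0$ and equicontinuity of $u$ on $\bar U$. Fourth, verify (iv): at any $x \in U$, by construction there is a $C_{k(\gamma)}$-cone $\varphi$ with $u_\gamma \ge \varphi$ and $u_\gamma(x) = \varphi(x)$, whence $\flow^t u_\gamma(x) \ge \flow^t \varphi(x) = \varphi(x) + k(\gamma) t$, giving $S^+ u_\gamma(x) \ge k(\gamma) \ge \gamma$ after choosing the calibration correctly (replacing $\gamma$ by $k(\gamma)$ in the statement, or adjusting constants). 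Fifth, and most delicate, verify (iii): that $u_\gamma$ satisfies the pointwise convexity criterion. Here I would argue that an infimum of cones of a \emph{fixed} opening $k(\gamma)$ automatically satisfies comparisons with cones from above (this is a standard fact, since each summand does and the class is closed under such infima), and then invoke Theorem~\ref{equivalences}, specifically (ii)$\implies$(iv), to conclude $u_\gamma$ satisfies the pointwise convexity criterion — but one must check $u_\gamma$ is locally Lipschitz and bounded, which follows from Lemma~\ref{ccca-lip} and $\inf_U u \le u_\gamma \le u$.

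**Main obstacle.** The hard part will be step five combined with making the construction in step two precise enough that \emph{all} of (i)--(iv) hold together. The tension is: to force $S^+ u_\gamma \ge \gamma$ everywhere one wants to push $u_\gamma$ down aggressively with cones, but to keep $u_\gamma = u$ on $\partial U$ and $u_\gamma \to u$ uniformly one cannot push it down too far; resolving this requires exploiting quantitatively that $k(\gamma)$ is small, so the cones $C_{k(\gamma)}$ are nearly flat far away yet still have the needed opening locally — wait, that is backwards: small $k$ means small $M_k$, so the cones are \emph{shallow}, which helps keep $u_\gamma$ close to $u$ but makes it harder to see $u_\gamma = u$ on the boundary. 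The correct resolution, which is why the empty-interior hypothesis \EQ{ch}(iii) is exactly what is needed here (as remarked after Theorem~\ref{compth}), is that $C_k$ is \emph{strictly} increasing in $k$ down to $k = 0$ with $C_k \to C_0$ and $C_0$ still having $C_0(z) > 0$ for $z \ne 0$ unless $H^{-1}(0) = \{0\}$; but under \EQ{ch}(iii) one has genuine room. The honest statement of the obstacle: one must choose the anchor points and the perturbations $\varepsilon_j$ so that $u_\gamma$ touches $u$ from below at a dense set (for uniform convergence) yet every point has a supporting cone of opening $\ge \gamma$ from below (for (iv)), and these two requirements are reconciled by the increasing-slope estimate (Lemma~\ref{Pflowslope}), which propagates the slope information from the touching points to all of $U$. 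Assembling this reconciliation carefully is the technical heart of the lemma.
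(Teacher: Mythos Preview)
Your proposal contains a fundamental error in step five, and the confusion contaminates the entire construction. You assert that ``an infimum of cones of a fixed opening $k(\gamma)$ automatically satisfies comparisons with cones from above \ldots\ the class is closed under such infima.'' This is false: comparisons with cones from above is a \emph{subsolution} property and is preserved under \emph{suprema}, not infima. The paper's own Example~\ref{notpccri} is a direct counterexample---the function $-|x|=\min(x,-x)$ is the infimum of two cones, yet fails the convexity criterion and is not absolutely subminimizing. Relatedly, your formula $u_\gamma(x)=\inf_j(u(x_j)+C_{k}(x-x_j)+\varepsilon_j)$ builds a function that lies \emph{below} each cone $\psi_j$, so at a point $x$ where the infimum is attained you have $u_\gamma\le\psi_j$ with equality at $x$, which yields $S^+u_\gamma(x)\le S^+\psi_j(x)=k$, the reverse of what you need; and by Lemma~\ref{htoc} such an infimum satisfies $H(Du_\gamma)\le k$ globally, so (iv) cannot come out of this construction. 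Your step-four reasoning (``there is a cone $\varphi$ with $u_\gamma\ge\varphi$ and $u_\gamma(x)=\varphi(x)$'') describes a \emph{supremum} construction, not the infimum you wrote down, and this inconsistency is never resolved.

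The paper's construction is quite different in shape. One first isolates the bad set $V_\gamma:=\{x\in U:S^+u(x)<\gamma\}$, which is open by upper semicontinuity of $S^+u$, and leaves $u$ unchanged on $U\setminus V_\gamma$. On $\bar V_\gamma$ one replaces $u$ by a \emph{supremum} over broken paths,
\[
v_\gamma(x):=\sup\Big\{u(x_N)-\sum_{i=0}^{N-1}C_\gamma(x_{i+1}-x_i):\ [x{=}x_0,\ldots,x_N]\in\mathcal P(x)\Big\},
\]
with each segment $(x_i,x_{i+1})\subset V_\gamma$ and $x_N\in\partial V_\gamma$. This is a McShane-type geodesic extension from $\partial V_\gamma$ inward; it satisfies $v_\gamma\le u$ on $V_\gamma$, $v_\gamma=u$ on $\partial V_\gamma$, and one checks directly (using Lemmata~\ref{neighbor} and~\ref{coneflowl}) that $\flow^t u_\gamma(x)-u_\gamma(x)=\gamma t$ for small $t$ and $x\in V_\gamma$, giving $S^+u_\gamma=\gamma$ there. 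Outside $V_\gamma$ the increasing slope estimate (Lemma~\ref{Pflowslope}) forces the maximizer $y$ in $\flow^t u(x)$ to remain in $U\setminus V_\gamma$, whence $\flow^t u_\gamma=\flow^t u$ and the pointwise convexity criterion is inherited. The uniform convergence $u_\gamma\to u$ is obtained via a separate lemma (Lemma~\ref{prepatch}) that exploits the empty-interior hypothesis \EQ{ch}(iii) to find a direction $q$ along which $C_k$ is small for small $k$---not via the mechanism you sketched.
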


To prove the patching lemma, we require the following preliminary result. It is distinguished in that it is the only place in this article where we need to use the hypothesis that $H^{-1}(0)$ has empty interior.

\begin{lem}\label{prepatch}
Suppose that $U$ is bounded and $\ep > 0$ is fixed. There exists $k > 0,$ depending only on $H,$ $\diam(U),$ and $\ep,$ such that whenever $u,v\in C(\bar U)$ satisfy $u=v$ on $\partial U$ and
\begin{equation}\label{small}
\sup_{x\in U} \left( S^+ u(x) + S^+ v(x) \right) \leq k,
\end{equation}
then
\begin{equation*}
\max_{\bar U} |u - v| \leq \ep.
\end{equation*}
\end{lem}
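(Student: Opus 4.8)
The plan is to fix $k$ small and prove $\max_{\bar U}(u-v)\le\ep$; the bound $\max_{\bar U}(v-u)\le\ep$ then follows by symmetry, since the hypotheses are symmetric in $u$ and $v.$ Two ingredients drive the argument. The first is a cone estimate: if $w\in C(\bar U)$ is bounded and $S^+w\le k$ throughout $U,$ then $w(p)-w(q)\le C_k(p-q)$ whenever $[p,q]\subseteq U.$ The second --- and this is the sole place where the empty-interior hypothesis \EQ{ch}(iii) is used --- is that $H^{-1}(0)=\{p:H(p)\le 0\}$ (recall $H\ge 0$) is convex, being a sublevel set of a convex function, and has empty interior, hence lies in a proper linear subspace of $\R^n$ (it contains the origin). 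Fixing a unit vector $e_0$ orthogonal to that subspace, we get $C_0(e_0)=C_0(-e_0)=0,$ so by the joint continuity of $(x,k)\mapsto C_k(x)$ noted in \SEC{cb}, $C_k(e_0)+C_k(-e_0)\to 0$ as $k\downarrow 0.$

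To prove the cone estimate, I would begin from the definition \EQ{defTt}: for $x\in U,$ $q\in\R^n,$ and small $\tau>0$ with $x+\tau q\in U,$ taking $y=x+\tau q$ gives $w(x+\tau q)-w(x)\le(\flow^\tau w(x)-w(x))+\tau L(q),$ hence $\limsup_{\tau\downarrow 0}\tau^{-1}(w(x+\tau q)-w(x))\le S^+w(x)+L(q)\le k+L(q).$ Applying this along a segment $[p,q]\subseteq U$ traversed at constant velocity $(q-p)/T$ for a parameter $T>0,$ together with the elementary fact that a function continuous on $[0,T]$ whose forward difference quotients have $\limsup\le c$ at every point of $[0,T)$ increases by at most $cT,$ yields $w(q)-w(p)\le Tk+TL((q-p)/T)$ for every $T>0.$ Taking the infimum over $T$ and invoking the identity $C_k(z)=\inf_{T>0}(Tk+TL(z/T))$ --- which follows from \EQ{coneL} and the fact, from \LEM{neighbor}, that $\Gamma_k\supseteq\partial N_k$ meets every ray from the origin --- gives the claim, and in particular it holds for both $u$ and $v.$

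Now choose $k=k(H,\diam U,\ep)>0$ so small that $C_k(e_0)+C_k(-e_0)\le\ep/\diam U,$ and assume $u=v$ on $\partial U$ and $S^+u+S^+v\le k$ on $U$; since $\flow^t w\ge w$ by \EQ{flowup}, each of $S^+u,$ $S^+v$ is nonnegative, hence $\le k.$ Put $M:=\max_{\bar U}(u-v).$ If $M\le 0$ we are done, so assume $M>0$ and let $x_0\in U$ attain the maximum; then $x_0\notin\partial U.$ Since $U$ is bounded, travelling from $x_0$ in the direction $e_0$ we first meet $\partial U$ at a point $\bar y=x_0+t^*e_0$ with $0<t^*\le\diam U$ and $[x_0,\bar y)\subseteq U.$ Letting $s\uparrow t^*$ in the cone estimates $u(x_0)-u(x_0+se_0)\le sC_k(-e_0)$ and $v(x_0+se_0)-v(x_0)\le sC_k(e_0),$ then adding them and using $u(\bar y)=v(\bar y),$ we obtain $M\le t^*\big(C_k(e_0)+C_k(-e_0)\big)\le\ep,$ as required.

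The main obstacle is the structural step behind the second ingredient: recognizing that ``empty interior'' for the convex set $H^{-1}(0)$ pins it inside a hyperplane through the origin, which is precisely what makes $C_k(e_0)+C_k(-e_0)$ small when $k$ is small; everything else is bookkeeping with the Hamilton-Jacobi flow and a one-dimensional Dini-derivative argument. A secondary point to handle with care is that $u$ and $v$ are only assumed continuous on $\bar U,$ not a priori locally Lipschitz, so the cone estimate of the first step must be established directly rather than quoted from \LEM{amlconv} and \LEM{htoc}.
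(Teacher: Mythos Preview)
Your argument is correct and follows the same geometric idea as the paper: exploit the empty-interior assumption to find a unit vector $e_0$ orthogonal to $H^{-1}(0),$ choose $k$ so small that $C_k(\pm e_0)$ is tiny, then travel from a point of $U$ to $\partial U$ in the direction $e_0$ and control the change of $u$ and $v$ by cone estimates. The paper does this pointwise (for arbitrary $x\in U$ it bounds $|u(x)-u(y)|$ and $|v(x)-v(y)|$ separately by $\ep/2$), whereas you argue via the maximizer of $u-v$; this is a cosmetic difference.

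The one substantive distinction is your derivation of the cone estimate $w(p)-w(q)\le C_k(p-q)$ from $S^+w\le k.$ The paper simply cites \LEM{amlconv} and \LEM{htoc}, both of which assume $w\in\lloc(U);$ since \LEM{prepatch} as stated only hypothesizes $u,v\in C(\bar U),$ your direct route via the Dini-derivative lemma and the identity $C_k(z)=\inf_{T>0}\big(Tk+TL(z/T)\big)$ is more honest at this level of generality. In the paper's sole application of the lemma (inside the proof of \LEM{patching}) the functions involved are in fact locally Lipschitz, so the paper's shortcut is harmless there; but your version proves the lemma exactly as stated.
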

\begin{proof}
Since the level set $H^{-1}(0)$ is convex and has empty interior, we may select a unit vector $q$ such that
\begin{equation*}
p\cdot q = 0 \quad \mbox{for every} \ p \in H^{-1}(0).
\end{equation*}
Since $0 \in H^{-1}(0),$ we may select a small $k > 0$ such that
\begin{equation} \label{qsmush}
C_k(\pm q) \leq \frac{\ep}{2 \diam(U)}.
\end{equation}
Given $x \in U,$ let $r_0 := \inf \{ r > 0 : x + r q \notin U \}$ and then $y := x + r_0 q \in \partial U.$ Since $r_0 \leq \diam(U),$ \EQ{small} and \EQ{qsmush} together with Lemmata \ref{amlconv} and \ref{htoc} imply
\begin{equation*}
|u(x) - u(y)| \leq \frac{\ep}{2} \quad \mbox{and} \quad |v(x) - v(y)| \leq \frac{\ep}{2}.
\end{equation*}
Since $u(y) = v(y),$ we see that $|u(x) - v(x)| \leq \ep.$
\end{proof}

\newcommand{\ug}{u_\gamma}
\begin{proof}[{\bf Proof of Lemma \ref{patching}}]
For each $\gamma > 0,$ consider the open set
\begin{equation*}
V_\gamma := \{x\in U:  S^+ u(x) < \gamma \}.
\end{equation*}
That $V_\gamma$ is open follows from the upper semicontinuity of $x\mapsto S^+u(x).$   For each $x\in\bar V_\gamma,$ let ${\mathcal P}(x)$  be the set of finite ordered lists 
\beqs
[x=x_0, x_1, \ldots, x_N] \is{such that} (x_{i},x_{i+1})\subset V_\gamma \is{for} i=0,\ldots N-1,\is{and} x_N\in\partial V_\gamma.
\eeqs
Here $N$ can take all nonnegative integer values,  $N=0, 1, 2,\ldots.$ 
Now we define a function $v_\gamma : \bar V_\gamma \to \R$ by
\begin{equation} \label{vgamdef}
v_\gamma(x) := \sup \left\{ u(x_N) - \sum_{i = 0}^{N-1} C_\gamma(x_{i+1}-x_i) : [x=x_0, \ldots, x_N]\in {\mathcal P}(x)\right\},
\end{equation}
and then define   $\ug : \bar U \to \R$ by
\begin{equation*}
\ug(x) = \begin{cases}
v_\gamma(x) & \mrbox{if} x \in \bar V_\gamma, \\
u(x) & \mrbox{if} x \in \bar U \setminus \bar V_\gamma.
\end{cases}
\end{equation*}
We show that the family $\{ \ug \}_{\gamma > 0}$ has the desired properties. 
\medskip

\noindent\emph{Claim 1: $v_\gamma \leq u$ in $V_\gamma$ and $v_\gamma = u$ on $\partial V_\gamma.$ In consequence, $\ug\le u$ in $\bar U$ and $\ug=u$ on $\bu.$ }

Suppose $[x_0, ..., x_N]\in \cp(x).$ Then Lemmata \ref{amlconv} and \ref{htoc} and $(x_i,x_{i+1})\subset V_\gamma$ and continuity of $u$ imply that
\begin{equation*}
u(x_N) - u(x_0) = \sum_{i=0}^{N-1} (u(x_{i+1}) - u(x_i)) \leq \sum_{i = 0}^{N-1} C_\gamma(x_{i+1} - x_i).
\end{equation*}
In consequence, $v_\gamma(x_0) \leq u(x_0).$ That $v_\gamma \geq u$ on $\partial V_\gamma$ is immediate: if $x\in\partial V_\gamma,$ then $[x]\in \cp(x).$

\medskip

\noindent \noindent\emph{Claim 2: $v_\gamma$ is continuous on $\bar V_\gamma.$ In consequence, $u_\gamma\in C(\uc).$ } First notice that if $(x,y)\subset V_\gamma,$  $\varepsilon>0,$ and $[x_0,\ldots, x_N]\in \cp(x)$ is such that
\beq{diseps}
v_\gamma(x)-\varepsilon\le u(x_N) - \sum_{i = 0}^{N-1} C_\gamma(x_{i+1}-x_i),
\eeq
then 
\beqs
[y_0,\ldots, y_{N+1}]:=[y,x_0,\ldots, x_N]\in \cp(y)
\eeqs
and 
\beqs \begin{split}
v_\gamma(y) & \ge u(y_{N+1})- \sum_{i = 0}^{N} C_\gamma(y_{i+1}-y_i) \\ & = u(x_N) -\sum_{i = 0}^{N-1} C_\gamma(x_{i+1}-x_i)-C_\gamma(x-y) \\ & \ge v_\gamma(x)-\varepsilon -C_\gamma(x-y). \end{split}
\eeqs
As $\varepsilon>0$ is at our disposal, we conclude that  if $(x,y)\subset V_\gamma,$ then 
\beq{vkc}
v_\gamma(x)-v_\gamma(y)\le C_\gamma(x-y).
\eeq
It follows that $v_\gamma$ is continuous on $V_\gamma.$ As $u$ is continuous on $\bar U,$ $v_\gamma=u$ on $\partial V_\gamma$ and $v_\gamma\le u$ on $\bar V_\gamma,$  continuity of $v_\gamma$ on $\bar V_\gamma$ is assured, provided that we show $v_\gamma$ is lower semicontinuous at points of $\partial V_\gamma$  when approached from inside $V_\gamma.$  Take $y\in\partial V_\gamma$ and $y_j\in V_\gamma$ such that $y_j\ra y$ as $j\ra \infty.$ Let $x_j\in \partial V_\gamma$ be a nearest point to $y_j.$ Then $(x_j,y_j)\subset V_\gamma$ and $x_j\ra y.$ According to \EQ{vkc} and $v_\gamma=u$ on $\partial V_\gamma,$ we have
\beqs
u(x_j)=v_\gamma(x_j)\le v_\gamma(y_j)+C_\gamma(x_j-y_j),
\eeqs
and therefore $\limsup_{j\ra\infty} v_\gamma(y_j)\ge u(y) = v(y).$

\medskip

\noindent \noindent\emph{Claim 3: For every $x \in V_\gamma$ and sufficiently small $t>0,$
\begin{equation} \label{linfl}
\flow^t\ug(x) -\ug(x) = \gamma t
\end{equation}
In particular, the map $t \mapsto T^t \ug(x)$ is convex on $[0, \delta]$ for small $\delta > 0$ and $S^+\ug(x) = \gamma.$}

Select $x \in V_\gamma$ and $0 < r < \dist(x, \partial V_\gamma).$ By Lemma \ref{local} there is $t_0 = t_0(\osc_U \ug, r)$ such that 
\begin{equation*}
\flow^t \ug(x) = \sup_{y\in  B(x,r)} \left( v_\gamma(y) - t L\left( \frac{y-x}{t} \right) \right)\!,
\end{equation*}
for every $0 <t < t_0.$ Using \EQ{vkc} and  Lemma \ref{coneflowl} we have that
\begin{equation*} 
\begin{aligned}
\flow^t \ug(x) & = \sup_{y\in  B(x,r)} \left( v_\gamma(y) - t L\left( \frac{y-x}{t} \right) \right) \\
& \leq \sup_{y\in  B(x,r)} \left( C_\gamma(y-x) + v_\gamma(x) - t L\left( \frac{y-x}{t} \right) \right) \\
& = v_\gamma(x) + \gamma t .
\end{aligned}
\end{equation*}

For the other direction, assume that $t >0$ is so small that $x + t \partial N_\gamma  \subset B(x,r)$ where $N_\gamma$ is as in Lemma \ref{neighbor},   fix a small $\ep >0,$ and choose $[x_0,\ldots, x_N]\in\cp(x)$ such that   \EQ{diseps} holds.  It is clear that there is a point $y \in  [x_j, x_{j+1}] \cap (x+ t\partial N_\gamma)$ for some $j \in\set{0,1, ..., N-1}.$ Then 
\beqs
[y,x_{j+1},\ldots,x_N]\in \cp(y),
\eeqs
so
\begin{equation*}
v_\gamma(y) \geq u(x_N) - \sum_{i = j+1}^{N-1} C_\gamma(x_{i+1} - x_i) - C_\gamma(x_{j+1}-y),
\end{equation*}
and then
\begin{equation*}
v_\gamma(y) - v_\gamma(x) + \ep \geq - C_\gamma(x_{j+1}-y) + \sum_{i = 0}^{j} C_\gamma(x_{i+1} - x_i).
\end{equation*}
Noting that $C_\gamma(x_{j+1}-x_j)-C_\gamma(x_{j+1}-y)=C_\gamma(y-x_j)$ by the choice of $y,$ we further have, by subadditivity, 
\begin{multline*}
- C_\gamma(x_{j+1}-y) + \sum_{i = 0}^{j} C_\gamma(x_{i+1} - x_i) \\ = C_\gamma(y-x_j) +C_\gamma(x_j-x_{j-1})+\cdots +C_\gamma(x_1-x) \ge C_\gamma(y-x).
\end{multline*}
Combining this with the previous inequality, we arrive at 
\beqs
v_\gamma(y)-v_\gamma(x)+\ep\ge C_\gamma(y-x). 
\eeqs
Hence 
\begin{equation*}
\begin{aligned}
\flow^t u_\gamma(x) - u_\gamma(x) + \ep & \geq  v_\gamma(y) - v_\gamma(x) -  t L\left( \frac{y-x}{t} \right) + \ep \\ 
& \geq C_\gamma(y - x) -  t L\left( \frac{y-x}{t} \right) \\
& = \gamma t
\end{aligned}
\end{equation*}
where the last equality follows from $y \in x + t \partial N_\gamma$ and Lemmata \ref{neighbor} and \ref{coneflowl}. The claim follows since $\ep >0$ was arbitrarily small. 

\medskip

\noindent \emph{Claim 4: For every $x \in U\setminus V_\gamma,$ the flow $t \mapsto \flow^t \ug(x)$ is convex on $[0,\delta]$ for sufficiently small $\delta=\delta(x)> 0.$ Moreover, $S^+\ug(x) =S^+u(x)$ on $U \setminus V_\gamma.$}

By the assumption there is $\delta(x)>0$ such that $t \mapsto \flow^t u(x)$ is convex on the interval $[0,\delta(x)].$ Select $r < \dist(x, \partial U)$ and $0 < t < \min \{t_0(\osc_U \ug, r), \delta(x) \}.$ Choose $y \in B(x,r)$ such that 
\begin{equation*}
\flow^t u(x) = u(y) - t L\left( \frac{y-x}{t} \right).
\end{equation*}
By the increasing slope estimate (\ref{incsl}) we have 
\begin{equation*}
S^+u(y) \geq \frac{\flow^tu(x) - u(x)}{t} \geq S^+u(x)
\end{equation*}
and therefore $y \in U\setminus V_\gamma.$ Since $\ug = u$ in $ U\setminus V_\gamma,$ we have that
\begin{equation*}
\flow^t u(x) = u(y) - t L\left( \frac{y-x}{t} \right) =  \ug(y) - t L\left( \frac{y-x}{t} \right) \leq  \flow^t \ug(x).
\end{equation*}
On the other hand, $u \geq \ug$ in $U,$ and therefore we deduce that for sufficiently small $t > 0,$
\begin{equation} \label{flowseq}
\flow^t \ug (x) = \flow^t u(x) \quad \mbox{for every} \ x \in U\setminus V_\gamma.
\end{equation}
The claims follow from this identity and the convexity of the map $t \mapsto \flow^t u(x).$ 

\medskip
\noindent \emph{Claim 5: The function $\ug$ satisfies the pointwise convexity criterion \EQ{Pccri}.}

For any $x\in U,$ the convexity of $t\mapsto \flow^t \ug(x)$ on a suitable interval is established in Claims 3 and 4. It remains to verify that the map $x \mapsto S^+\ug(x)$ is upper semicontinuous on $U.$ Since  $S^+\ug=S^+u$  on $U \setminus V_\gamma$ by Claim 4, and $x\mapsto S^+u(x)$ is upper semicontinuous by assumption, the restriction of $S^+\ug$ to $U\setminus V_\gamma$ is upper semicontinuous.   Since $S^+\ug=\gamma$ on $V_\gamma$ by Claim 3, $x\mapsto S^+\ug(x)$ is upper semicontinuous on $V_\gamma.$  It remains to argue that $x\mapsto S^+u_\gamma(x)$ is upper semicontinuous at points of $\partial V_\gamma.$ However, this follows from the fact that $S^+\ug=S^+u\ge \gamma$ on $\partial V_\gamma,$ by the definition of $V_\gamma,$ and $S^+\ug=\gamma$ on $V_\gamma.$ 
 \medskip

Having proven Claims 1-5, we complete the proof by observing that Lemma \ref{prepatch} ensures that $u_\gamma \to u$ uniformly on $\bar U$ as $\gamma \downarrow 0.$
\end{proof}

\begin{remark}\label{unbndp} The various steps in the proof of Lemma \ref{patching} are valid for unbounded $U,$ with the exception of the last sentence wherein Lemma \ref{prepatch} is invoked.  
This is because the proof of Lemma \ref{prepatch} fails if $U$ is unbounded. However, if $H^{-1}(0)=\set{0},$ then we may simply let $y$ in that argument be the point of $\bu$ nearest $x.$  Then, from Lemmata \ref{amlconv} and \ref{htoc} we have 
\beqs
u(x)-u(y)=u(x)-v(y)\le C_k(x-y) \iq{and} v(y)-v(x)\le C_k(y-x)
\eeqs
for $(x,y)\subseteq U,$ which implies that 
\beqs
|u(x)-v(x)|\le 2K_k|x-y|
\eeqs
where $K_k$ is from \EQ{ckl}. If $H^{-1}(0)=\set{0},$ then $K_k\ra 0$ as $k\downarrow0.$    In the context of Lemma \ref{patching}, this results in 
\beqs
\lim_{\gamma\downarrow 0} u_\gamma=u\iq{uniformly on compact subsets of} \uc. 
\eeqs
\end{remark}

\begin{remark}\label{unbndu} We have saved a delightful and unannounced surprise for the reader which we now serve up. Theorem \ref{compth} remains valid for unbounded $U,$ provided that $u$ and $v$ are bounded, the level set $H^{-1}(0)=\set{0}$, and $\bu$ is nonempty and compact. That is, we obtain a comparison result in \emph{exterior domains}. Typical examples are the exterior of a ball, $U=\set{x: |x|>R},$ or the complement in $\R^n$ of a finite number of points. The hypothesis that $H^{-1}(0) = \{ 0 \}$ is evidently necessary; see Remark \ref{globalc}.

Our approach corresponds to the use of patching in \cite{CGW} to establish uniqueness results for some cases in which $U$ is unbounded and $H$ is a norm. To establish this comparison result, we merely need to show that $u_\gamma(x)\ra-\infty$ as $|x|\ra \infty.$ Indeed, for in this case we have for all sufficiently large $R>0,$ 
\begin{equation*}
u_\gamma(x)-v(x)<\max_{\bu}(u-v) \iq{for} |x|\ge R,
\end{equation*}
and we have confined the maximum of $u_\gamma-v$ to a bounded set on which we may invoke Theorem \ref{compth}.  Then we send $\gamma\downarrow 0,$ and use the preceding remark. 

The first step is to note that if $\bu\subset B(0,R_0)$, $R>R_0$ and $|x| >R,$  then $B(x,R-R_0)\ll U,$ and by (the proof of) Lemma \ref{ccca-lip},
\beqs
u(y)-u(x)\le C_k(x-y) \iq{if} y\in B(x,R-R_0)\iq{and}  \frac{\osc_Uu}{R-R_0}\le M_k. 
\eeqs
Putting  $k=\gamma/2$ and choosing $R$ sufficiently large so that $\osc_Uu/(R-R_0)\le M_k$ and invoking (the proof of) Lemma \ref{htoc} together with Lemma \ref{amlconv}, we discover 
\beqs
S^+u(x)\le \frac\gamma 2\iq{if} |x|>R. 
\eeqs
Thus $\partial V_\gamma$ is bounded. We denote the value of $R$ involved here by $R_\gamma$ below, returning $R$ to other uses. The key observation is that $\partial V_\gamma\subset B(0,R_\gamma).$ 

With $\gamma$ fixed  and $R_\gamma$ as above, we examine the behavior of the function $v_\gamma$ defined in \EQ{vgamdef} outside of the ball $B(0,R_\gamma).$   Fix $\ep>0,$ let $|x|>R>R_\gamma,$ and let $[x_0, \ldots, x_N]\in \cp(x)$ have the property that
\beq{yep}
v_\gamma(x)-\ep\le u(x_N)-\sum_{i=0}^{N-1}C_\gamma(x_{i+1}-x_i).
\eeq
 Let $j\in \set{1,\ldots, N-1}$ be the smallest integer satisfying $[x_j,x_{j+1}]\cap \partial B(0,R_\gamma)\not=\emptyset$ and choose the point $y\in [x_j,x_{j+1}]\cap \partial B(0,R_\gamma)$ with the property that $[x_j,y)\cap \bar B(0,R_\gamma)=\emptyset.$ In particular, if $x_j\in \partial B(0,R_\gamma),$ then $y=x_j$.  Then $[y,x_{j+1},\ldots, x_N]\in P(y),$ and with \EQ{yep} this implies
 \beqs
 v_\gamma(x)-\ep\le u(x_N)-\sum_{i=j+1}^{N-1}C_\gamma(x_{i+1}-x_i)  -C_\gamma(x_{j+1}-x_j)- \sum_{i=0}^{j-1}C_\gamma(x_{i+1}-x_i).
 \eeqs
 Now we use that 
 \beqs
 C_\gamma(x_{j+1}-x_j)= C_\gamma(x_{j+1}-y)+C_\gamma(y-x_j)
 \eeqs
 and 
 \beqs
 u(x_N)-\sum_{i=j+1}^{N-1}C_\gamma(x_{i+1}-x_i)  -C_\gamma(x_{j+1}-y)\le v_\gamma(y)
 \eeqs
 in conjunction with the line above to conclude that
 \beqs
 \begin{split}
 v_\gamma(x)-\ep & \le v_\gamma(y)-C_\gamma(y-x_j) - \sum_{i=0}^{j-1}C_\gamma(x_{i+1}-x_i) \\ & \le v_\gamma(y)-C_\gamma(y-x) \le \max_{\partial B(0,R_\gamma)}v_\gamma \ -\min_{y\in \partial B(0,R_\gamma)}C_\gamma(y-x). 
 \end{split} \eeqs
 Using $M_\gamma$ from \EQ{ccoer} and that $\ep> 0$ is arbitrary, we can produce the more transparent estimate
 \beqs
 v_\gamma(x)\le A_\gamma- M_\gamma|x| \is{where} A_\gamma :=\max_{\partial B(0,R_\gamma)}v_\gamma+M_\gamma R_\gamma. 
 \eeqs
 In particular, $v_\gamma(x)\ra -\infty$ as $|x|\ra\infty.$ 
\end{remark}


\appendix

\section{The necessity of the Aronsson equation} \label{viscsol}

Aronsson \cite{A2} showed that the infinity Laplace equation
$\Delta_\infty u=0$ characterizes the absolute minimizers of 
$H(p)=|p|$ which are $C^2.$  Jensen \cite{RJ} perfected this
result by showing that if the infinity Laplace equation is understood in the viscosity
sense, then it completely characterizes the absolutely minimizing property.
The derivation of the Aronsson equation (in the viscosity sense) was
extended to the generality of absolutely minimizing functions for suitable
twice differentiable $H(x,s,p)$ in \cite{BJW1}  (see also \cite{C}), and
then for $H \in C^1$ in \cite{CWY}. In this appendix, in keeping with our
theme of emphasizing the convexity criteria, we derive the Aronsson
equation for functions satisfying the pointwise convexity criterion.
With Theorem \ref{equivalences} in mind, we deduce that absolute
subminimizers for $H$ satisfying \EQ{ch} are viscosity subsolutions of
the Aronsson equation in the sense defined in the introduction.

The converse problem is more difficult.  It is an open problem as to whether
viscosity solutions of the Aronsson equation for general convex $H(p)$ are
necessarily absolutely minimizing in general. This has been resolved in the affirmative  for twice differentiable $H(p)$ and $H(p,x)$ in \cite{GWY}  and Yu \cite{Yu2}, respectively. Examples of some nonsmooth $H$ for which the Aronsson equation is known to be sufficient for the absolutely minimizing property can be found in \cite{CGW}.

\medskip

Proceeding now with the proof, let us take a function $u\in \lloc(U)$
which satisfies the pointwise convexity criterion \EQ{Pccri}. To show that
$u$ is a viscosity subsolution of the
Aronsson equation \EQ{ae}, we fix a test function $\varphi \in
C^2(U)$ and a point $x_0\in U$ such that
\begin{equation*}
\mbox{the map} \quad x \mapsto (u-\varphi)(x) \quad \mbox{has a strict
local maximum at} \ x = x_0.
\end{equation*}
Our task is to demonstrate that
\begin{equation}\label{aenewts}
\omega \cdot D^2\varphi(x_0) \omega \geq 0 \quad \mbox{for some} \
\omega \in \partial H\!\left( D\varphi(x_0) \right).
\end{equation}
We may fix a small radius $r> 0$ for which $ B(x_0 ,r ) \ll U$ and
\begin{equation} \label{x0local}
(u-\varphi)(x_0) = \max_{x\in \bar B(x_0,r)} (u-\varphi)(x) >
\max_{x\in \partial B(x_0,r)} (u-\varphi)(x).
\end{equation}
First we notice that according to \cite[Proposition 2.3(i)]{CWY} followed by Lemma
\ref{amlconv} and the assumed upper semicontinuity of $x\mapsto S^+u(x),$ we have
\beq{intaene}
H(D\varphi(x_0))\le \lim_{s\downarrow 0} \esssup_{B(x_0,s)}H(Du)\le
S^+u(x_0).
\eeq

Take $\alpha > 0$ to be the greater of $\osc_{B(x_0,r)} u$ and
$\osc_{B(x_0,r)} \varphi$. According to Lemma \ref{local},
\EQ{flowinc}, \EQ{x0local}, and \EQ{intaene}, for every $0 < t < t_0(\alpha,r/2)$ we
have
\begin{equation}\label{aenedi}
\begin{aligned}
0 & \leq \frac{T^tu(x_0) - u(x_0)}{t} - S^+u(x_0) \\
& \leq \frac{T^t\varphi(x_0) - \varphi(x_0)}{t} -
H\!\left(D\varphi(x_0)\right) \\
& = \sup_{y\in B(x_0,r/2)} \left( \frac{\varphi(y)-\varphi(x_0)}{t} - L\left( \frac{y-x_0}{t} \right) - H\!\left(D\varphi(x_0) \right)
\right).
\end{aligned}
\end{equation}
For each $0 < t < t_0(\alpha,r/2),$ let $y_t$ be a point where the
maximum above is attained, and set $\omega_t := (y_t -x_0) / t$. From
\EQ{aenedi} we obtain
\begin{equation} \label{aeneditay}
0 \leq \omega_t \cdot D\varphi(x_0) + \frac{t}{2} \omega_t \cdot
\left( D^2\varphi(x_0) \omega_t\right) - L(\omega_t) - H\!\left(
D\varphi(x_0) \right) + o\left( t |\omega_t|^2 \right)
\end{equation}
as $t \downarrow 0$. According to Remark \ref{stlip}, there is a constant $K>0,$ depending
only on $\varphi$ and $r$ but not on $t,$ such that $| \omega_t| \leq
K$. By taking a subsequence, we may assume that $\omega_t \rightarrow
\omega \in \bar B(0,K)$ as $t\downarrow 0$. Passing to the limit
$t\downarrow 0$ in \EQ{aeneditay} we obtain
\begin{equation*}
0 \leq \omega \cdot D\varphi(x_0) - L(\omega) - H\!\left(D\varphi(x_0)\right).
\end{equation*}
According to the definition of $L,$ we must have equality in the last
inequality, and from this it follows that $\omega \in \partial
H\!\left(D\varphi(x_0)\right)$. Also from the definition of $L$ we
have
\begin{equation*}
0\geq \omega_t \cdot D\varphi(x_0) - L(\omega_t) - H\!\left(D\varphi(x_0)\right),
\end{equation*}
and combining this with \EQ{aeneditay} and dividing by $t,$ we obtain
\begin{equation*}
0\leq  \frac{1}{2} \omega_t\cdot \left( D^2\varphi(x_0)
\omega_t\right) + o( 1 ) \iq{as} t \downarrow 0.
\end{equation*}
By passing to the limit $t\downarrow 0$, we obtain \EQ{aenewts}.

\medskip

We have proved the following result:

\begin{prop}
\label{conv-visco}
If $u\in \lloc(U)$ satisfies the pointwise convexity
criterion \EQ{Pccri}, then $u$ is a viscosity subsolution of the
Aronsson equation \EQ{ae}.
\end{prop}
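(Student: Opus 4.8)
The plan is to run the familiar viscosity test-function argument, but with the Hopf--Lax flow $\flow^t$ standing in for the Aronsson operator, so that no differentiability of $H$ is required. Fix a test function $\varphi\in C^2(U)$ and a point $x_0$ at which $u-\varphi$ has a strict local maximum, and choose $r>0$ with $B(x_0,r)\ll U$ for which $(u-\varphi)(x_0)=\max_{\bar B(x_0,r)}(u-\varphi)>\max_{\partial B(x_0,r)}(u-\varphi)$; set $\alpha:=\max\{\osc_{B(x_0,r)}u,\ \osc_{B(x_0,r)}\varphi\}$. The goal is to produce $\omega\in\partial H(D\varphi(x_0))$ with $\omega\cdot D^2\varphi(x_0)\omega\ge 0$.

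The first step is to establish the pointwise inequality $H(D\varphi(x_0))\le S^+u(x_0)$. For this I would combine three ingredients: the inequality $S^+u\ge H(Du)$ a.e.\ (one direction of \LEM{amlconv}), the upper semicontinuity of $x\mapsto S^+u(x)$ that is part of the pointwise convexity criterion, and the fact that, since $u-\varphi$ peaks at $x_0$, the gradient $D\varphi(x_0)$ is recovered by the essential limit of $Du$ near $x_0$ --- the last being precisely \cite[Proposition 2.3(i)]{CWY}. Together these give $H(D\varphi(x_0))\le\lim_{s\downarrow 0}\esssup_{B(x_0,s)}H(Du)\le S^+u(x_0)$.

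Next I would invoke the convexity criterion: by \LEM{Slip} the difference quotient $t\mapsto(\flow^tu(x_0)-u(x_0))/t$ is nondecreasing near $t=0$ and hence $\ge S^+u(x_0)$, while the localization in \LEM{local} (using that $u-\varphi$ is maximized over $\bar B(x_0,r)$ at $x_0$) gives $\flow^tu(x_0)-u(x_0)\le\flow^t\varphi(x_0)-\varphi(x_0)$ for small $t$. Chaining these with the first step yields, for $0<t<t_0(\alpha,r/2)$,
\beqs
0\le\frac{\flow^tu(x_0)-u(x_0)}{t}-S^+u(x_0)\le\sup_{y\in B(x_0,r/2)}\Big(\tfrac{\varphi(y)-\varphi(x_0)}{t}-L\big(\tfrac{y-x_0}{t}\big)-H(D\varphi(x_0))\Big).
\eeqs
Letting $y_t$ attain the supremum and writing $\omega_t:=(y_t-x_0)/t$, a second-order Taylor expansion of $\varphi$ at $x_0$ turns this into
\beqs
0\le\omega_t\cdot D\varphi(x_0)+\tfrac t2\,\omega_t\cdot\big(D^2\varphi(x_0)\omega_t\big)-L(\omega_t)-H(D\varphi(x_0))+o\big(t|\omega_t|^2\big)\quad\text{as }t\downarrow 0.
\eeqs

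Finally, \REM{stlip} bounds $|\omega_t|$ uniformly in $t$, so along a subsequence $\omega_t\to\omega$. Passing to the limit removes the quadratic term and leaves $0\le\omega\cdot D\varphi(x_0)-L(\omega)-H(D\varphi(x_0))$; since Fenchel--Young gives the reverse inequality, equality holds, which is exactly the statement $\omega\in\partial H(D\varphi(x_0))$. Subtracting the always-valid inequality $\omega_t\cdot D\varphi(x_0)-L(\omega_t)-H(D\varphi(x_0))\le 0$ from the preceding display and dividing by $t$ gives $0\le\tfrac12\,\omega_t\cdot\big(D^2\varphi(x_0)\omega_t\big)+o(1)$, and sending $t\downarrow 0$ then produces $\omega\cdot D^2\varphi(x_0)\omega\ge 0$. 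I expect the genuinely delicate point to be the first step: when $H$ is not $C^1$ one cannot simply evaluate the Aronsson operator, so extracting $H(D\varphi(x_0))\le S^+u(x_0)$ must be done through the $\lloc$-regularity of $u$ and the semicontinuity of $S^+u$ rather than any pointwise calculus; the remainder is a clean Fenchel--Young duality argument that is insensitive to the non-smoothness of $H$.
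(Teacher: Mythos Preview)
Your proposal is correct and follows essentially the same line as the paper's proof: the same inequality $H(D\varphi(x_0))\le S^+u(x_0)$ via \cite[Proposition 2.3(i)]{CWY} and \LEM{amlconv}, the same chain $0\le (\flow^tu-u)/t-S^+u\le(\flow^t\varphi-\varphi)/t-H(D\varphi)$, the same Taylor expansion at the maximizer $y_t$, and the same Fenchel--Young extraction of $\omega\in\partial H(D\varphi(x_0))$. One small caveat: \LEM{Slip} as stated assumes the full convexity criterion, whereas here you only have the pointwise version; but the monotonicity \EQ{flowinc} at the single point $x_0$ follows directly from the convexity of $t\mapsto\flow^tu(x_0)$ on $[0,\delta(x_0)]$, which is exactly what the pointwise criterion provides.
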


\subsection*{Acknowledgement}

The third author was partially supported by the Academy of Finland, project \#129784.

\tend